\theoremstyle{plain}
\newtheorem{theorem}{Theorem}
\newtheorem{lemma}[theorem]{Lemma}
\newtheorem{proposition}[theorem]{Proposition}
\theoremstyle{definition}
\newtheorem{definition}[theorem]{Definition}
\newtheorem{corollary}[theorem]{Corollary}
\newtheorem{example}{\sc Example}
\theoremstyle{remark}
\begin{document}
\title[Remarks to G{\l}azek's results on $n$-ary groups]
      {Remarks to G{\l}azek's results on $n$-ary groups}
\author[W. A. Dudek]
       {Wies{\l}aw A. Dudek }
\address{Institute of Mathematics and Computer Science\\
         Wroc{\l}aw University of Technology\\
         Wybrze\.ze Wyspia\'nskiego 27 \\
         50-370 Wroc{\l}aw, Poland}
\email{dudek@im.pwr.wroc.pl}

\begin{abstract}
It is a survey of the results obtained by K. G{\l}azek's and his
co-workers. We restrict our attention to the problems of
axiomatizations of $n$-ary groups, classes of $n$-ary groups,
properties of skew elements and homomorphisms induced by skew
elements, constructions of covering groups, classifications and
representations of $n$-ary groups. Some new results are added too.
 \end{abstract}
 \maketitle
  \footnotetext{{\it 2000 Mathematics Subject Classification.}
           20N15}
\footnotetext{{\it Key words and phrases.} polyadic group, $n$-ary
group, variety, representation. }

\section{Introduction}

Ternary and $n$-ary generalizations of algebraic structures are
the most natural ways for further development and deeper
understanding of their fundamental properties. First ternary
algebraic operations were introduced already in the XIXth century
by A. Cayley. As a development of Cayley's ideas they were
considered $n$-ary generalization of matrices and their
determinants and general theory of $n$-ary algebras \cite{car5,
poj}, $n$-group rings \cite{zek/art3} and algebras \cite{zek}. For
some physical applications in Nambu mechanics, supersymmetry,
Yang-Baxter equation, etc. see e.g. \cite{vai/ker}. On the other
hand, Hopf algebras and their ternary generalizations play the
basic role in the quantum group theory.

On one of the L. Gluskin's seminars (in '60s of the past century)
B. Gleichgewicht, a friend of K. G{\l}azek, had familiarized
himself with the theory of $n$-ary systems. It was him who brought
the idea of researching such structures to Wroc{\l}aw where at
that time a group of algebraists lead by E. Marczewski was active.
Some time later (in '70s and '80s) a group of mathematicians
interested in $n$-ary systems gathered around the G{\l}azek's and
Gleichgewicht's algebraic seminar at the Institute of Mathematics
of Wroc{\l}aw University. Constructive discussions on this seminar
resulted later in many articles of such authors like (in
alphabetic order): W. A. Dudek, B. Gleichgewicht, J.Michalski, I.
Sierocki, M. B. Wanke-Jakubowska and M.E. Wanke-Jerie. The first
bibliography of $n$-groups and some group-like $n$-ary systems
\cite{Skopje} prepared by K. G{\l}azek in 1983 was based on the
work of this seminar.

Below we present a short survey of the results of K. G{\l}azek's
and his co-workers. We also present few theorems of other authors
and add several new unpublished results. We restrict our attention
to the problems of axiomatizations of $n$-ary groups, classes of
$n$-ary groups, properties of skew elements and homomorphisms
induced by skew elements, constructions of covering groups,
classifications and representations of $n$-ary groups. We finish
our survey with results on independent sets of $n$-ary groups
contained in the article \cite{DG}, which is probably the last
Glazek's article.

\section{Preliminaries}

The non-empty set $G$ together with an $n$-ary operation $f:G^n\to
G$ is called an {\it $n$-ary groupoid} or an {\it $n$-ary
operative} and is denoted by $(G,f)$.

According to the general convention used in the theory of such
groupoids the sequence of elements $x_i,x_{i+1},\ldots,x_j$ is
denoted by $x_i^j$. In the case $j<i$ this symbol is empty. If
$x_{i+1}=x_{i+2}=\ldots=x_{i+t}=x$, then instead of
$x_{i+1}^{i+t}$ we write $\stackrel{(t)}{x}$. In this convention
$f(x_1,\ldots,x_n)= f(x_1^n)$ and
 \[
 f(x_1,\ldots,x_i,\underbrace{x,\ldots,x}_{t},x_{i+t+1},\ldots,x_n)=
 f(x_1^i,\stackrel{(t)}{x},x_{i+t+1}^n) .
 \]
If $\,m=k(n-1)+1$, then the m-ary operation $\,g\,$ of the form
 \[
 g(x_{1}^{k(n-1)+1})=\underbrace{f(f(...f(f}_{k}
(x_{1}^{n}),x_{n+1}^{2n-1}),...),x_{(k-1)(n-1)+2}^{k(n-1)+1})
\]
is denoted by $\,f_{(k)}$ and is called the {\it long product} of
$f$ or an $m$-ary operation {\it derived} from $f$. In certain
situations, when the arity of $\,g\,$ does not play a crucial
role, or when it will differ depending on additional assumptions,
we write $\,f_{(.)}\,$, to mean $\,f_{(k)}\,$ for some
$\,k=1,2,...$.

An $n$-ary groupoid $(G,f)$ is called {\it $(i,j)$-associative} or
an {\it $(i,j)$-associative} if
 \begin{equation}
f(x_1^{i-1},f(x_i^{n+i-1}),x_{n+i}^{2n-1})=
f(x_1^{j-1},f(x_j^{n+j-1}),x_{n+j}^{2n-1})\label{assolaw}
 \end{equation}
holds for all $x_1,\ldots,x_{2n-1}\in G$. If this identity holds
for all $1\leqslant i<j\leqslant n$, then we say that the
operation $f$ is {\it associative} and $(G,f)$ is called an {\it
$n$-ary semigroup} or, in the Gluskin's terminology, an {\it
$n$-ary associative} (cf. \cite{Glu65}). In the binary case (i.e.,
for $n=2$) it is an arbitrary semigroup.

If for all $x_0,x_1,\ldots,x_n\in G$ and fixed
$i\in\{1,\ldots,n\}$ there exists an element $z\in G$ such that
\begin{equation}                                                          \label{solv}
f(x_1^{i-1},z,x_{i+1}^n)=x_0
\end{equation}
then we say that this equation is {\it $i$-solvable} or {\it
solvable at the place $i$}. If this solution is unique, then we
say that (\ref{solv}) is {\it uniquely $i$-solvable}.

An $n$-ary groupoid $(G,f)$ uniquely solvable for all
$i=1,\ldots,n$ is called an {\it $n$-ary quasigroup}. An
associative $n$-ary quasigroup is called an {\it $n$-ary group}.
Note that for $n=2$ it is an arbitrary group.

The idea of investigations of such groups seems to be going back
to E. Kasner's lecture \cite{kasner} at the fifty-third annual
meeting of the American Association for the Advancement of Science
in 1904. But the first paper containing the first important
results was written (under inspiration of Emmy Noether) by W.
D\"ornte in 1928 (cf. \cite{dor}). In this paper D\"ornte observed
that any $n$-ary groupoid $(G,f)$ of the form
 $\, f(x_1^n)=x_1\circ x_2\circ\ldots\circ x_n$, where $(G,\circ
 )$ is a group, is an $n$-ary group but for every $n>2$ there are
$n$-ary groups which are not of this form. $n$-Ary groups of the
first form are called {\it reducible} or {\it derived} from the
group $(G,\circ )$, the second - {\it irreducible}. Moreover in
some $n$-ary groups there exists an element $e$ (called an {\it
$n$-ary identity} or a {\it neutral element}) such that
 \[
 f(\stackrel{(i-1)}{e},x,\stackrel{(n-i)}{e})=x
 \]
holds for all $x\in G$ and for all $i=1,\ldots,n$. It is
interesting that $n$-ary groups containing neutral element are
reducible (cf. \cite{dor}). Irreducible $n$-ary groups do not
contains such elements. On the other hand, there are $n$-ary
groups with two, three and more neutral elements. The set
$Z_n=\{0,1,\ldots,n-1\}$ with the operation
$f(x_1^{n+1})=(x_1+x_2+\ldots +x_{n+1})({\rm mod}\,n)$ is an
example of an $n$-ary group in which all elements are neutral. The
set of all neutral elements of a given $n$-ary group (if it is
non-empty) forms an $n$-ary subgroup (cf. \cite{D'01} or
\cite{gal'03}).

It is worth to note that in the definition of an $n$-ary group,
under the assumption of the associativity of $f$, it suffices to
postulate the existence of a solution of (\ref{solv}) at the
places $i=1$ and $i=n$ or at one place $i$ other than $1$ and $n$.
Then one can prove uniqueness of the solution of (\ref{solv}) for
all $i=1,\ldots,n$ (cf. \cite{post}, p.$213^{17}$).

On the other hand, Sokolov proved in \cite{sok} that in the case
of $n$-ary quasigroups (i.e., in the case of the existence of a
unique solution of (\ref{solv}) for any place $i=1,\ldots,n$) it
is sufficient to postulate the $(j,j+1)$-associativity for some
fixed $j=1,\ldots,n-1$. Basing on the Sokolov's method W. A.
Dudek, K. G{\l}azek and B. Gleichgewicht proved in 1997 the
following proposition (for details see \cite{DGG}).

\begin{proposition}\label{DGG1}{\bf (W.A.Dudek, K.G{\l}azek, B.Gleichgewicht,
1997)}\newline An $n$-ary groupoid $(G,f)$ is an $n$-ary group if
and only if $($at least$)$ one of the following conditions is
satisfied:
\begin{enumerate}
\item [$(a)$] the $(1,2)$-associative law holds and the
equation $(\ref{solv})$ is solvable for $\,i=n\,$ and uniquely
solvable for $\,i=1$,
\item [$(b)$] the $(n-1,n)$-associative law holds and the
equation $(\ref{solv})$ is solvable for $\,i=1\,$ and uniquely
solvable for $\,i=n$,
\item [$(c)$] the $(i,i+1)$-associative law holds for some
$\,i\in \{2,...,n-2\}\,$ and the equation $(\ref{solv})$ is
uniquely solvable for $\,i\,$ and some $j>i$.
\end{enumerate}
\end{proposition}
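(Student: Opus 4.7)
The plan is to reduce each of (a), (b), (c) to the combination of the two results cited immediately before the proposition: Post's theorem (under full associativity, solvability at $i=1$ and $i=n$, or at a single inner place, forces unique solvability everywhere) and Sokolov's theorem (an $n$-ary quasigroup satisfying $(j,j+1)$-associativity for a single $j$ is automatically fully associative). It therefore suffices, in each case, to show that the stated hypotheses already force $(G,f)$ to be an $n$-ary quasigroup; Sokolov's theorem then supplies full associativity and $(G,f)$ is an $n$-ary group by definition.

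Cases (a) and (b) are interchanged by reversing the order of the arguments of $f$, so I would carry out only (a). The task is to upgrade mere solvability at $i=n$ to uniqueness there, and to produce unique solvability at every intermediate slot as well. For the first step, assume $f(a_1^{n-1},z_1)=f(a_1^{n-1},z_2)$ and denote the common value by $c$; I would embed this equality inside a larger $n$-ary product by choosing auxiliary elements $b_1^{n-1}$ and forming $f(f(a_1^{n-1},z_k),b_1^{n-1})$, then apply $(1,2)$-associativity to re-bracket. Choosing the auxiliary tuple so that the rearranged expression puts $z_1$ and $z_2$ into the first slot of a common $n$-ary product (filling the remaining slots via solvability at $i=n$ whenever an auxiliary ``back-solved'' element is needed) lets unique solvability at $i=1$ conclude $z_1=z_2$. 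Iterating this kind of embedding-and-rebracketing argument, using the endpoint uniqueness already in hand, propagates unique solvability to every intermediate slot.

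Case (c) is analogous but more symmetric. Here $(i,i+1)$-associativity is the engine that transports uniqueness between adjacent slots, and the two anchors of unique solvability (at $i$ and at some $j>i$) allow propagation in both directions to cover all $n$ positions. The result is an $n$-ary quasigroup satisfying $(i,i+1)$-associativity with $i\in\{2,\ldots,n-2\}$, to which Sokolov's theorem applies.

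The main obstacle is the bootstrap at the start of each case: from a single associativity identity one obtains only one legal re-bracketing, and from a single uniqueness hypothesis one can cancel in only one slot, so the auxiliary $n$-tuple inserted into $f$ must be chosen delicately so that after the one available re-bracketing the ``unknowns'' $z_1,z_2$ do land in the slot where cancellation is allowed; the solvability hypotheses are needed precisely to guarantee the existence of the auxiliary elements that make this substitution well-defined. Once the single propagation step is accomplished, the iteration is routine, and the final appeal to Sokolov is immediate.
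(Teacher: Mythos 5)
The paper itself offers no proof of this proposition (it defers to \cite{DGG}), so the only question is whether your sketch would work, and there is a genuine gap at exactly the point you identify as "the main obstacle": the bootstrap is not actually performed, and the one concrete maneuver you describe fails. In case $(a)$, starting from $f(a_1^{n-1},z_1)=f(a_1^{n-1},z_2)$ and forming $f\bigl(f(a_1^{n-1},z_k),b_1^{n-1}\bigr)$, the only re-bracketing licensed by the $(1,2)$-associative law is
\[
f\bigl(f(a_1^{n-1},z_k),b_1^{n-1}\bigr)=f\bigl(a_1,f(a_2^{n-1},z_k,b_1),b_2^{n-1}\bigr),
\]
which leaves $z_k$ in the $(n-1)$-st slot of an inner product sitting in position $2$; no choice of the auxiliary tuple $b_1^{n-1}$ can move $z_k$ into the first slot of any $n$-ary product, since $(1,2)$-associativity only exchanges a bracket over positions $1,\dots,n$ with one over positions $2,\dots,n+1$, and $z_k$ enters in the last position of the inner block. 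Hence unique solvability at $i=1$ cannot be invoked to cancel, and $z_1=z_2$ is not obtained. The same objection applies to the claimed "iteration": with a single associativity identity there is no second, independent re-bracketing to iterate, and your sketch gives no mechanism at all for even the \emph{existence} of solutions at the interior places $2,\dots,n-1$, about which the hypotheses of $(a)$ say nothing.

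The outer frame of your plan is sound and matches the historical route (the paper says the result was obtained "basing on the Sokolov's method"): reduce $(b)$ to $(a)$ by reversing arguments, establish the full quasigroup property, and finish with Sokolov's theorem. But the quasigroup property \emph{is} the content of the proposition, and it requires a genuinely new ingredient rather than slot-by-slot cancellation. One workable route in case $(a)$ is to study the left translations $L_{a_2^n}\colon x\mapsto f(x,a_2^n)$: unique solvability at $i=1$ makes each a bijection, $(1,2)$-associativity gives the composition rule $L_{b_2^n}\circ L_{a_2^n}=L_{f(a_2^n,b_2),b_3^n}$ so the set of such maps is closed under composition, and solvability at $i=n$ is then used to show this set is a transformation group, from which neutral sequences and solvability (with uniqueness) at every place can be manufactured. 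Some argument of this kind must replace the cancellation step; as written, the proposal records the difficulty without overcoming it.
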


\medskip

This result was generalized by W. A. Dudek and I. Gro\'zdzi\'nska
(cf. \cite{DD80}) and independently by N. Celakoski (cf.
\cite{cel77}) in the following way:

\begin{proposition} {\bf (N.Celakoski, 1977, W.A.Dudek, I.Gro\'zdzi\'nska, 1979)}\newline
 An $n$-ary semigroup $(G,f)$ is an $n$-ary group if and only if for some
$1\leqslant k\leqslant n-2$ and all $a_1^k\in G$ there are
elements $x_{k+1}^{n-1},\,y_{k+1}^{n-1}\in G$ such that
\[
f(a_1^k,x_{k+1}^{n-1},b)=f(b,y_{k+1}^{n-1},a_1^k)=b\label{DG-80}
\]
for all $\,b\in G$.
\end{proposition}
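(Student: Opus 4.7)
The proof splits into two directions; both rely on the remark preceding Proposition~\ref{DGG1}: in an associative $n$-ary groupoid, mere existence of solutions of $(\ref{solv})$ at the two places $i=1$ and $i=n$ already forces $(G,f)$ to be an $n$-ary group.

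For the forward direction I first record the following lemma valid in every $n$-ary group: if $f(u_1^{n-1},b_0)=b_0$ holds for some single $b_0\in G$, then $f(u_1^{n-1},b)=b$ holds for every $b\in G$. Indeed, by the quasigroup property one can write $b=f(b_0,v_1^{n-1})$ for some $v_1^{n-1}\in G$, and associativity then gives
\[
f(u_1^{n-1},b)=f\bigl(u_1^{n-1},f(b_0,v_1^{n-1})\bigr)=f\bigl(f(u_1^{n-1},b_0),v_1^{n-1}\bigr)=f(b_0,v_1^{n-1})=b.
\]
Given $a_1^k$, pick any $b_0\in G$ and any $x_{k+1},\ldots,x_{n-2}\in G$; unique solvability at position $n-1$ then supplies an $x_{n-1}$ with $f(a_1^k,x_{k+1}^{n-1},b_0)=b_0$, and the lemma upgrades this to $f(a_1^k,x_{k+1}^{n-1},b)=b$ for every $b$. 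The sequence $y_{k+1}^{n-1}$ is produced symmetrically.

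For the backward direction I first reduce to $k=1$: if the condition holds for some $k\ge 2$, then for any $a_1^{k-1}\in G$ and any auxiliary $a\in G$, applying the condition to $(a_1^{k-1},a)$ and prepending $a$ to the resulting $x_{k+1}^{n-1}$ gives the required $x$-sequence at level $k-1$, and applying the condition to $(a,a_1^{k-1})$ and appending $a$ to the resulting $y_{k+1}^{n-1}$ gives the required $y$-sequence. By descending induction we may assume $k=1$, so every $a\in G$ has an $(n-2)$-tuple $x(a)$ satisfying $f(a,x(a),b)=b$ and a $y(a)$ satisfying $f(b,y(a),a)=b$ for all $b\in G$. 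By the remark cited above it is then enough to produce, for every $c_0,c_1^{n-1}\in G$, solutions of $f(c_1^{n-1},z)=c_0$ and of $f(z,c_2^n)=c_0$. The first one I build by iterated peeling: the identity
\[
f\bigl(c_1^{n-1},\,f(x(c_{n-1}),\alpha_1,\alpha_2)\bigr)=f(c_1^{n-2},\alpha_1,\alpha_2),
\]
coming from $(n-1,n)$-associativity together with $f(c_{n-1},x(c_{n-1}),\alpha_1)=\alpha_1$, trades the trailing $c_{n-1}$ for two new parameters. Substituting $\alpha_1=f(x(c_{n-2}),\beta_1,\beta_2)$ and iterating $n-1$ times peels off every $c_i$, reducing the task to exhibiting an $n$-tuple whose $f$-value is $c_0$; the tuple $(c_0,x(c_0),c_0)$ does this by the left-neutral identity applied to $c_0$. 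Unwinding the substitutions yields an explicit nested $f$-expression for $z$; position $1$ is handled symmetrically with the $y(a)$'s.

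The chief difficulty lies in the backward direction, specifically in the bookkeeping for the iterative peeling: one must verify at each stage that every inner $f$-call has exactly $n$ arguments and that each associativity step used is a legitimate instance of $(\ref{assolaw})$. Once solvability at $i=1$ and $i=n$ is established, the remark preceding Proposition~\ref{DGG1} closes the proof and automatically delivers uniqueness at every position.
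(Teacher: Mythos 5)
Your proof is correct. A caveat on the comparison: the paper states this proposition without any proof (it is a survey item, citing Celakoski and Dudek--Gro\'zdzi\'nska), so there is no in-paper argument to measure yours against; judged on its own it holds up. The forward direction is fine: the one-point-to-all-points lemma is a clean use of the $(1,n)$-instance of associativity, and unique solvability at place $n-1$ (available since $k\leqslant n-2$) supplies the needed $x_{n-1}$. The reduction from general $k$ to $k=1$ by prepending/appending an auxiliary $a$ is valid. In the peeling step, each move is a legitimate $(m,m+1)$-instance of $(\ref{assolaw})$ applied to a string of exactly $2n-1$ symbols, the argument counts check out at every stage (after $j$ peels one has $n-1-j$ of the $c_i$ and $j+1$ free parameters, totalling $n$), and terminating with the $n$-tuple $(c_0,x(c_0),c_0)$ indeed yields $c_0$ by the left-neutral identity; I verified the unwound expression explicitly for $n=3$. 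The only external input is the quoted remark of Post (associativity plus mere existence of solutions at places $1$ and $n$ already forces an $n$-ary group, with uniqueness everywhere), which the paper itself asserts with a reference, so leaning on it is legitimate here; if you wanted the argument fully self-contained, your peeling map is in fact a two-sided inverse of the translation $z\mapsto f(c_1^{n-1},z)$, which would give uniqueness at the outer places directly.
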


\medskip
The above two propositions and methods used in the proofs gave the
impulse to further study the axiomatics of $n$-ary groups (cf.
\cite{rus79, rus81, Tyu'85, Usan'03} and many others). From
different results obtained by various authors we select one simple
characterization proved in \cite{gal95}.

\begin{proposition} {\bf (A.M.Gal'mak, 1995)}\newline
An $n$-ary semigroup $(G,f)$ is an $n$-ary group if and only if
for some $1\leqslant i,j\leqslant n-1$ and all $a,b\in G$ there
are $x,y\in G\,$ such that
\[
f(x,\stackrel{(i-1)}{b},\stackrel{(n-i)}{a})=f(\stackrel{(n-j)}{a},\stackrel{(j-1)}{b},y)=b.
\]
\end{proposition}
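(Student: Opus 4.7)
\emph{Necessity} is immediate: in an $n$-ary group, equation~(\ref{solv}) is uniquely solvable at every position, so specialising the free entries to the patterns $(\stackrel{(i-1)}{b},\stackrel{(n-i)}{a})$ and $(\stackrel{(n-j)}{a},\stackrel{(j-1)}{b})$ directly produces the required $x$ and $y$.

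\emph{Sufficiency.} The plan is to verify Proposition~\ref{DGG1}(a). Since $(G,f)$ is an $n$-ary semigroup, the $(1,2)$-associative law is automatic, so it is enough to establish that equation~(\ref{solv}) is solvable at position $n$ and uniquely solvable at position $1$. I would first specialise both hypotheses to $b=a$, obtaining for every $a\in G$ ``local'' neutralisers $\bar a,a^{\ast}\in G$ with
\[
 f(\bar a,\stackrel{(n-1)}{a})=a \qquad\text{and}\qquad f(\stackrel{(n-1)}{a},a^{\ast})=a.
\]

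The key step is to upgrade these to the global identities
\[
 f(\bar a,\stackrel{(n-2)}{a},u)=u \quad\text{and}\quad f(u,\stackrel{(n-2)}{a},a^{\ast})=u \qquad (u\in G).
\]
I would derive the first by computing the $(2n-1)$-ary long product
$
 f_{(2)}\bigl(\bar a,\stackrel{(n-2)}{a},\stackrel{(n-j)}{a},\stackrel{(j-1)}{u},\eta\bigr)
$
in two ways, where $\eta$ is the element furnished by the second hypothesis applied to the pair $(a,u)$. Bracketing the inner $n$-ary $f$ on the first $n$ entries collapses those to $a$ via $f(\bar a,\stackrel{(n-1)}{a})=a$, and the outer $f$ then evaluates to $u$ by the second hypothesis; bracketing the inner $f$ on the last $n$ entries collapses those directly to $u$ by the second hypothesis, leaving $f(\bar a,\stackrel{(n-2)}{a},u)$ outside. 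Equating the two gives the first identity, and the mirror computation using the element $\xi$ provided by the first hypothesis gives the second.

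With the global neutralisers in hand, existence of solutions to $f(z,c_2,\dots,c_n)=c_0$ and $f(c_1,\dots,c_{n-1},z)=c_0$ is extracted by rewriting $c_0$ through $\bar a$ or $a^{\ast}$ and reassociating the resulting long product to match an instance of the original hypothesis, while uniqueness at position~$1$ follows from the left-cancellation implied by $f(\bar a,\stackrel{(n-2)}{a},\cdot)=\mathrm{id}$; Proposition~\ref{DGG1}(a) then delivers the $n$-ary group structure. The principal technical obstacle is the neutraliser-upgrade computation itself: the $(2n-1)$-ary long product must be bracketed so that each of the two bracketings simplifies cleanly under exactly one of the two hypotheses, and pinning down the right insertion block $\stackrel{(n-j)}{a},\stackrel{(j-1)}{u},\eta$ (and its mirror $\xi,\stackrel{(i-1)}{u},\stackrel{(n-i)}{a}$) is the combinatorial crux of the argument.
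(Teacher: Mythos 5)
The paper only cites this result (to \cite{gal95}) and gives no proof, so your proposal has to stand on its own. Its first half does: necessity is indeed immediate, the specialisation $b=a$ gives $f(\bar a,\stackrel{(n-1)}{a})=a=f(\stackrel{(n-1)}{a},a^{*})$, and your $(2n-1)$-ary long-product computation for the upgrade is correct — since $1\leqslant i,j\leqslant n-1$ the blocks $\stackrel{(n-i)}{a}$ and $\stackrel{(n-j)}{a}$ are nonempty, so bracketing the first (resp.\ last) $n$ entries collapses exactly as you describe, yielding $f(\bar a,\stackrel{(n-2)}{a},u)=u=f(u,\stackrel{(n-2)}{a},a^{*})$ for all $u$. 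But the final step is a genuine gap, in both of its halves. First, uniqueness at position $1$ means that $z\mapsto f(z,c_2^{n})$ is injective for an \emph{arbitrary} sequence $c_2^{n}$; the identity $f(\bar a,\stackrel{(n-2)}{a},\cdot)=\mathrm{id}$ only says that one specific translation, with the cancelling block on the \emph{left} and the variable in the \emph{last} slot, is the identity map. It gives no way to cancel an arbitrary $c_2^{n}$ on the right. Second, "rewriting $c_0$ through $\bar a$ or $a^{*}$ and reassociating to match an instance of the original hypothesis" cannot produce a solution of $f(c_1^{n-1},z)=c_0$ for arbitrary coefficients: the hypothesis only solves equations whose coefficient string is a two-letter block pattern $\stackrel{(n-j)}{a}\stackrel{(j-1)}{b}$, and rewriting $c_0$ as $f(\bar a,\stackrel{(n-2)}{a},c_0)$ puts $\bar a$, not $c_1$, in the leading position. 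The manipulations one is tempted to try here (I tried several) all come out circular.

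The gap can be closed, but it needs two further ideas. (1) Evaluating $f(\bar a,\stackrel{(n-2)}{a},a^{*})$ by each neutraliser identity gives $\bar a=a^{*}=:e_a$. (2) A left-neutral $(n-1)$-sequence is automatically right-neutral once some right-neutral sequence exists: bracket the $(2n-1)$-term product $w,e_a,\stackrel{(n-2)}{a},\stackrel{(n-2)}{a},e_a$ at positions $2,\dots,n+1$ to get $f(w,f(e_a,\stackrel{(n-1)}{a}),\stackrel{(n-3)}{a},e_a)=f(w,\stackrel{(n-2)}{a},e_a)=w$, and at positions $1,\dots,n$ to get $f(f(w,e_a,\stackrel{(n-2)}{a}),\stackrel{(n-2)}{a},e_a)=f(w,e_a,\stackrel{(n-2)}{a})$; hence $f(w,e_a,\stackrel{(n-2)}{a})=w$, and dually $f(\stackrel{(n-2)}{a},e_a,w)=w$. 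Now the Celakoski/Dudek--Gro\'zdzi\'nska criterion (Proposition 2 of the paper) applies with $k=1$: for any $a_1\in G$ take $x_2^{n-1}=(\stackrel{(n-3)}{a_1},e_{a_1})$ and $y_2^{n-1}=(e_{a_1},\stackrel{(n-3)}{a_1})$, so that $f(a_1,x_2^{n-1},b)=f(\stackrel{(n-2)}{a_1},e_{a_1},b)=b$ and $f(b,y_2^{n-1},a_1)=f(b,e_{a_1},\stackrel{(n-2)}{a_1})=b$ for all $b$. Note that the criterion you actually need here is Proposition 2, not Proposition 1(a): the latter asks for solvability and uniqueness for arbitrary coefficient sequences, which is exactly what your sketch does not deliver, whereas Proposition 2 only asks for neutral sequences with a prescribed first letter, which is what the two-sided neutrality of $(\stackrel{(n-2)}{a},e_a)$ and $(e_a,\stackrel{(n-2)}{a})$ provides.
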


\bigskip

Note that in some papers there were investigated so-called {\it
infinitary} $(n=\infty)$ semigroups and quasigroups, i.e.,
groupoids $(G,f)$, where for all natural $\,i, j\,$ the operation
$f:G^{\infty}\to G$ satisfies the identity
 \[
f(x_1^{i-1},f(x_i^{\infty}),y_1^{\infty})=
f(x_1^{j-1},f(x_j^{\infty}),y_1^{\infty})
 \]
and the equation $\,f(x_1^{k-1},z_k,x_{k+1}^{\infty})=x_0\,$ has a
unique solution $z_k$ at any place $k$.

From the general results obtained in \cite{belz} and \cite{MTC}
one can deduce that infinitary groups have only one element. Below
we present a simple proof of this fact.

If $(G,f)$ is an infinitary group, then, according to the
definition, for any $y,z\in G$ and $u=f(\stackrel{(\infty)}{y})$
there exists $x\in G$ such that
$z=f(u,y,x,\stackrel{(\infty)}{y})$. Thus
\[\arraycolsep.5mm
 \begin{array}{rl}
f(z,\stackrel{(\infty)}{y})=&f(f(u,y,x,\stackrel{(\infty)}{y}),\stackrel{(\infty)}{y})=
 f(u,y,f(x,\stackrel{(\infty)}{y}),\stackrel{(\infty)}{y})\\[6pt]
=&f(f(\stackrel{(\infty)}{y}),y,f(x,\stackrel{(\infty)}{y}),\stackrel{(\infty)}{y})
=f(y,f(\stackrel{(\infty)}{y}),y,f(x,\stackrel{(\infty)}{y}),\stackrel{(\infty)}{y})\\[6pt]
=&f(y,u,y,f(x,\stackrel{(\infty)}{y}),\stackrel{(\infty)}{y})
=f(y,f(u,y,x,\stackrel{(\infty)}{y}),\stackrel{(\infty)}{y})=f(y,z,\stackrel{(\infty)}{y}),
 \end{array}
 \]
i.e., for all $y,z\in G\,$ we have
\[
f(z,\stackrel{(\infty)}{y})=f(y,z,\stackrel{(\infty)}{y}).
\]
Using this identity and the fact that for all $\,x,y\in G$ there
exists $z\in G$ such that $\,x=f(z,\stackrel{(\infty)}{y}),\,$ we
obtain
\[\arraycolsep.5mm
 \begin{array}{rl}
f(\stackrel{(\infty)}{x})=&f(x,f(z,\stackrel{(\infty)}{y}),\stackrel{(\infty)}{x})=
 f(x,f(y,z,\stackrel{(\infty)}{y}),\stackrel{(\infty)}{x})\\[6pt]
=&f(x,y,f(z,\stackrel{(\infty)}{y}),\stackrel{(\infty)}{x})
=f(x,y,\stackrel{(\infty)}{x}),
 \end{array}
 \]
which, by the uniqueness of the solution at the second place,
implies $x=y$. So, $G$ has only one element.

\medskip

To avoid repetitions in the sequel we consider only the case when
$n$ is a natural number higher than $2$, but a part of our results
is also true for $n=2$.

\section{Varieties of $n$-ary groups}

Directly from the definition of an $n$-ary group $(G,f)$ we can
see that for every $x\in G$ there exists only one $z\in G$
satisfying the equation
\begin{equation}
f(\overset{(n-1)}{x},z)=x.  \label{skew}
\end{equation}
This element is called \textit{skew} to $x$ and is denoted by
$\overline{x}$. In a ternary group ($n=3$) derived from the binary
group $(G,\circ )$ the skew element coincides with the inverse
element in $(G,\circ )$. Thus, in some sense, the skew element is
a generalization of the inverse element in binary groups. This
suggests that for $n\geqslant 3$ any $n$-ary group $(G,f) $ can be
considered as an algebra $(G,f,\bar{\,}\;)$ with two operations:
one $n$-ary $\,f:G^{n}\rightarrow G$ and one unary
$\;\bar{\,}:x\rightarrow \overline{x}$.

In ternary groups, as it was proved by W. D\"{o}rnte (cf.
\cite{dor}), we have
$\overline{f(x,y,z)}=f(\overline{z},\overline{y},\overline{x})$
and $\overline{\overline{x}}=x$, but for $n>3$ it is not true. For
$n>3$ there are $n$-ary groups in which one fixed element is skew
to all elements (cf. \cite{D'90}) and $n$-ary groups in which any
element is skew to itself. Then, of course,
$f(\overset{(n)}{x})=x$ for every $x\in G$. Such $n$-ary groups
are called \textit{idempotent}.

Nevertheless, the concept of skew elements plays a crucial role in
the theory of $n$-ary groups. Namely, as W. D\"ornte proved, in
any $n$-ary group $(G,f)$ for all $\,x,y\in G$, $\,2\leqslant
i,j\leqslant n\,$ and $\,1\leqslant k\leqslant n$ we have
 \begin{eqnarray}
  f(\stackrel{(i-2)}{x},\overline{x},\stackrel{(n-i)}{x},y)=y, \label{dor-r}\\
  f(y,\stackrel{(n-j)}{x},\overline{x},\stackrel{(j-2)}{x})=y. \label{dor-l}
  \end{eqnarray}

These two identities, called now {\it D\"ornte's identities}, are
used by many authors in description of a class of $n$-ary groups.

\begin{theorem} {\bf (B.Gleichgewicht, K.G{\l}azek, 1967)}\label{GG67}\newline
An algebra $(G,f,\bar{{\,}}\;)$ with one associative $n$-ary
$(n>2)$ operation $f$ and one unary operation
$\bar{{\,}}:x\mapsto\overline{x}$ is an $n$-ary group if and only
if the conditions $(\ref{dor-r})$ and $(\ref{dor-l})$ are
satisfied for all $x,y\in G$ and $i=j=2,3$.
\end{theorem}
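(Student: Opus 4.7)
The plan is to reduce the theorem directly to Proposition 2 above. The forward direction $(\Rightarrow)$ requires no work: in any $n$-ary group, the D\"ornte identities $(\ref{dor-r})$ and $(\ref{dor-l})$ hold for every admissible $i,j$, as noted in the text preceding the theorem, so in particular for $i=j=2,3$.

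For the converse, I would fix an arbitrary $a\in G$ and read off witnesses for the existential criterion of Proposition 2 at $k=1$ directly from the hypothesis. Instantiating $(\ref{dor-r})$ at $i=3$ with $x=a$ and generic $y=b$ gives $f(a,\overline{a},\stackrel{(n-3)}{a},b)=b$ for every $b\in G$, so the choice $x_2=\overline{a}$, $x_3=\ldots=x_{n-1}=a$ yields $f(a,x_2^{n-1},b)=b$. Symmetrically, $(\ref{dor-l})$ at $j=3$ with $x=a$ gives $f(b,\stackrel{(n-3)}{a},\overline{a},a)=b$, so the choice $y_2=\ldots=y_{n-2}=a$, $y_{n-1}=\overline{a}$ yields $f(b,y_2^{n-1},a)=b$. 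Since $n>2$, the value $k=1$ lies in the admissible range $1\leqslant k\leqslant n-2$; combined with the assumed associativity of $f$, the hypothesis of Proposition 2 is in force, and $(G,f)$ is an $n$-ary group.

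The argument is essentially a pattern-match between the D\"ornte identities at $i=j=3$ and the existential condition of Proposition 2, so I do not anticipate a real obstacle. In fact only the identities at $i=j=3$ enter the reduction; the identities at $i=j=2$ in the theorem statement are presumably included because the original 1967 argument predated Proposition 2 and produced solutions of $(\ref{solv})$ at all places by hand, a task for which all four identities are convenient. The one place needing a moment of care is the edge case $n=3$, where $k=1=n-2$ is the unique admissible value of $k$; but the reduction still goes through.
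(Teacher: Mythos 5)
Your reduction is correct, and it is worth noting at the outset that the paper itself states this 1967 theorem without proof (it is a survey item citing the original Colloquium Mathematicum article), so there is no in-text argument to match against; judged on its own, your route through Proposition 2 is sound and legitimate within the paper's ordering, since Proposition 2 appears earlier and is proved independently of Theorem \ref{GG67}. The pattern-match works exactly as you describe: the instances $f(x,\overline{x},\stackrel{(n-3)}{x},y)=y$ and $f(y,\stackrel{(n-3)}{x},\overline{x},x)=y$ of (\ref{dor-r}) and (\ref{dor-l}) at $i=j=3$ supply, for every $a\in G$, witnesses $x_2^{n-1}=(\overline{a},\stackrel{(n-3)}{a})$ and $y_2^{n-1}=(\stackrel{(n-3)}{a},\overline{a})$ for the criterion of Proposition 2 with $k=1$; these depend on $a$ but not on $b$, so the quantifier order of that proposition is respected, and $k=1\leqslant n-2$ because $n>2$. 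One small point you leave implicit: to conclude that the algebra $(G,f,\bar{\,}\;)$ --- and not merely the reduct $(G,f)$ --- belongs to the variety of $n$-ary groups, one should also check that the given unary operation is the skew operation of the resulting $n$-ary group. This is immediate: putting $y=x$ in (\ref{dor-l}) with $j=2$ gives $f(\stackrel{(n-1)}{x},\overline{x})=x$, which is the defining equation (\ref{skew}), and unique solvability (now available) identifies $\overline{x}$ with the skew element; the same conclusion can be squeezed out of the $j=3$ instance via unique solvability at place $n-1$, so your observation that only the $i=j=3$ identities are essential survives, and it is consistent with the paper's own remark that this 1967 system of identities is not independent.
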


It is the first important characterization of the variety of
$n$-ary groups. For example, basing on this theorem it is not
difficult to see that the function
$$
h(x,y,z)=f(x,\overline{y},\stackrel{(n-3)}{x},z)
$$
is the so-called Mal'cev operation. So, the class of all
$n$-groups (for any fixed $n>2$) is a Mal'cev variety, all
congruences of a given $n$-ary group commutes and the lattice of
all congruences of a fixed $n$-ary group is modular. Moreover, the
generalized Zassenhaus Lemma and the generalized Schreier and
H\"older-Jordan Theorems hold in any $n$-ary group (cf.
\cite{GG'77}). Schreier varieties of $n$-ary groups are described
by V. A. Artamonov (see \cite{Art'79}).

Unfortunately, the above system of identities defining the variety
of $n$-ary groups is not independent. The first independent system
of identities selecting the variety of $n$-ary groups from the
variety of $n$-ary semigroups was given by K. G{\l}azek and his
coauthors ten years later (cf. \cite{DGG}). Below we present the
minimal system of identities defining this variety. It is the main
result of \cite{D'80}.

\begin{theorem} {\bf (W.A.Dudek, 1980)}\label{rem}\newline
The class of $n$-ary groups coincides with the variety of $n$-ary
groupoids $(G,f,\bar{}\;)$ with an unary operation
$\,\bar{}:x\to\overline{x}$ for some fixed $i,j\in\{2,\ldots,n\}$
satisfying the identities $(\ref{dor-r})$, $(\ref{dor-l})$ and the
identity
\[
 f(f(x_1^{n}),x_{n+1}^{2n-1})=f(x_1,f(x_2^{n+1}),x_{n+2}^{2n-1}).
 \]
\end{theorem}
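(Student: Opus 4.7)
The ``only if'' direction is immediate, since any $n$-ary group is $(1,2)$-associative and D\"ornte's identities $(\ref{dor-r})$ and $(\ref{dor-l})$ hold for every admissible pair $(i,j)$. For the converse, assume $(G,f,\bar{\,}\;)$ satisfies the three hypothesized identities for the one fixed pair $(i,j)$. My plan is to verify the hypotheses of Proposition~\ref{DGG1}(a): that the equation $f(x_1^{n-1},z)=x_0$ is solvable and that $f(z,x_2^n)=x_0$ is uniquely solvable. Once this is shown, Proposition~\ref{DGG1}(a) immediately yields that $(G,f)$ is an $n$-ary group.

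The technical core is to promote the given D\"ornte identities, which only speak of the constant pattern $\stackrel{(i-2)}{x},\overline{x},\stackrel{(n-i)}{x}$, into fully variable forms. Since the only associative law at hand is the single $(1,2)$-shift, every rewriting step must be carried out using it. First I would rewrite $y$ in a given instance of $(\ref{dor-r})$ as another copy of the same left-identity pattern applied to a different element, and then use $(1,2)$-associativity to slide the inner $f$ one position to the right; iterating this procedure migrates the position of $\overline{x}$ within the $n$-tuple, yielding $(\ref{dor-r})$ for all admissible indices $i'\in\{2,\ldots,n\}$. The same bootstrapping, applied to mixed patterns of the form $f(x_1^{k-1},\overline{x}_k,x_{k+1}^{n-1},y)$, yields a left-cancellation rule: $f(x_1^{n-1},w)=f(x_1^{n-1},w')$ forces $w=w'$. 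A symmetric sequence of shifts, starting from $(\ref{dor-l})$, produces the full family of right D\"ornte identities together with a right-cancellation rule.

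With these extended identities and cancellation laws in hand, a solution of $f(x_1^{n-1},z)=x_0$ can be built as an iterated $f$-product of $\overline{x}_{n-1},\ldots,\overline{x}_1,x_0$, suitably padded to match the required arity; the cancellation cascade then leaves $x_0$. A mirror construction solves $f(z,x_2^n)=x_0$, and the left-cancellation rule enforces uniqueness. The hardest part is the lifting step: with only one value of $i$ and one of $j$ available, one has far less flexibility than in the Gleichgewicht--G{\l}azek setup of Theorem~\ref{GG67}, and one has to check carefully that the single $(1,2)$-shift really does suffice to realize every positional swap of $\overline{x}$ by iteration. Once this bootstrapping is carried out, Proposition~\ref{DGG1}(a) closes the argument.
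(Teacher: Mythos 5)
Your reduction to Proposition~\ref{DGG1}(a) is the right frame and the ``only if'' direction is indeed immediate, but the core of your argument --- the ``bootstrapping'' that is supposed to turn the two fixed D\"ornte identities into their fully variable forms --- does not work as described, and you explicitly leave it unchecked. The obstruction is structural: the only associativity available is the $(1,2)$-law, which rewrites an expression only when an inner $f$ occupies position $1$ or position $2$ of an outer $f$. Iterating it merely shuttles an inner product back and forth between positions $1$ and $2$; it never reaches positions $3,\ldots,n$. In particular, substituting $y=f(\stackrel{(i-2)}{u},\overline{u},\stackrel{(n-i)}{u},w)$ into $(\ref{dor-r})$ places the inner $f$ in position $n$ of the outer one, exactly where the $(1,2)$-law gives no purchase; and passing from the pattern $(\stackrel{(i-2)}{x},\overline{x},\stackrel{(n-i)}{x})$ to $(\stackrel{(i-1)}{x},\overline{x},\stackrel{(n-i-1)}{x})$ is a transposition inside a single unnested $f$, to which an associativity law does not even apply directly. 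The standard device for moving an inner $f$ rightward (as in Sokolov's theorem) is to rewrite the blocking entries themselves as values of $f$, which presupposes solvability of the relevant equations --- precisely what is being proved. The same circularity affects your ``left-cancellation rule'' at the last place: cancelling $x_1^{n-1}$ on the left of $f(x_1^{n-1},w)$ means acting on an inner $f$ sitting in position $n$, again out of reach of the $(1,2)$-law. Likewise, verifying that your candidate solution of $f(x_1^{n-1},z)=x_0$ (a long product substituted at the last place) actually collapses to $x_0$ requires reassociating at the right end, which the hypotheses do not directly permit.

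A workable argument must instead exploit the one configuration the $(1,2)$-law does control. For instance, $f(f(z,x_2^n),w_2^n)=f(z,f(x_2^n,w_2),w_3^n)$; choosing $w_2$ arbitrarily, setting $u=f(x_2^n,w_2)$ and completing $w_3^n$ so that $(u,w_3^n)$ is the right-neutral sequence of $(\ref{dor-l})$ collapses the right-hand side to $z$. This gives right cancellation and, applied twice, unique solvability at the first place with no migration of $\overline{x}$ at all (and already here the case $j=n$ needs separate treatment, since then the sequence in $(\ref{dor-l})$ begins with $\overline{u}$ rather than $u$). Solvability at the last place, the other hypothesis of Proposition~\ref{DGG1}(a), is the genuinely delicate point and is not the mirror image of this trick, because the $(1,2)$-law is not left--right symmetric. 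So your plan is reasonable, but the proof is missing exactly where you say it is hardest: the lifting step is asserted rather than proved, and the mechanism you propose for it is insufficient.
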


\medskip

Theorem~\ref{rem} is valid for $n>2$, but this theorem can be
extended to the case $n=2$. Namely, let
$\widehat{}:x\to\widehat{x}$ be an unary operation, where
$\widehat{x}$ is a solution of the equation
$f_{(2)}(\stackrel{(2n-2)}{x},\widehat{x})=x$. Then using the same
method as in the proof of Theorem 2 in \cite{DGG} we can prove the
following result announced in \cite{D'80}.

\begin{theorem} {\bf (W.A.Dudek, 1980)}\newline
The class of $n$-ary $(n\geqslant 2)$ groups coincides with the
variety of algebras $(G,f,\widehat{}\;)$ with one associative
$n$-ary operation $f$ and one unary operation
$\,\widehat{}:x\to\widehat{x}$ satisfying for some fixed
$i,j\in\{2,\ldots,n\}$ the following identities
\[
 f_{(2)}(y,\stackrel{(i-2)}{x},\widehat{x},\stackrel{(2n-i-1)}{x})=y=
f_{(2)}(\stackrel{(2n-1-j)}{x},\widehat{x},\stackrel{(j-2)}{x},y).
 \]
\end{theorem}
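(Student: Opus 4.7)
The plan is to reduce this statement to Theorem~\ref{rem} by working with the derived $(2n-1)$-ary operation $f_{(2)}$.

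For the forward direction, if $(G,f)$ is an $n$-ary group, I would observe that the long product $f_{(2)}$ is an associative $(2n-1)$-ary operation for which $(G,f_{(2)})$ is itself a $(2n-1)$-ary group; the element $\widehat{x}$ implicitly defined by $f_{(2)}(\stackrel{(2n-2)}{x},\widehat{x})=x$ is then precisely the skew of $x$ with respect to $f_{(2)}$, and the two displayed identities are immediate instances of $(\ref{dor-r})$ and $(\ref{dor-l})$ applied inside $(G,f_{(2)})$.

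For the converse, assume $f$ is $n$-ary associative and the two identities hold. First, $f_{(2)}$ inherits $(2n-1)$-ary associativity from $f$. A direct comparison of indices shows that the first displayed identity is exactly $(\ref{dor-l})$ for $f_{(2)}$ at position $j'=2n+1-i$, and the second is $(\ref{dor-r})$ for $f_{(2)}$ at position $i'=2n+1-j$; for $i,j\in\{2,\ldots,n\}$ both $i',j'$ lie in $\{n+1,\ldots,2n-1\}\subseteq\{2,\ldots,2n-1\}$. Since $n\geqslant 2$ yields $2n-1\geqslant 3$, Theorem~\ref{rem} applies to $(G,f_{(2)},\widehat{\,})$ and concludes that $(G,f_{(2)})$ is a $(2n-1)$-ary group.

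It then remains to descend from $f_{(2)}$ to $f$, and this is where the main obstacle lies. Given $x_0,x_1,\ldots,x_{k-1},x_{k+1},\ldots,x_n\in G$, I would fix an auxiliary $c\in G$, invoke $(2n-1)$-ary unique solvability for $f_{(2)}$ to obtain $u$ with $f_{(2)}(x_1^{k-1},u,x_{k+1}^n,\stackrel{(n-1)}{c})=f(x_0,\stackrel{(n-1)}{c})$, and then cancel the trailing block $\stackrel{(n-1)}{c}$ by using the D\"ornte identities for $f_{(2)}$ with skew $\widehat{c}$, arriving at $f(x_1^{k-1},u,x_{k+1}^n)=x_0$. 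Uniqueness for $f$ is inherited from that of $f_{(2)}$. This cancellation is the technical heart of the argument, and it is precisely the part to be executed in the manner of the proof of Theorem~2 in \cite{DGG}, with $n$ replaced throughout by $2n-1$ and $f$ by $f_{(2)}$; the author's remark "using the same method" refers exactly to this verbatim transfer.
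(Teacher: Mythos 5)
Your argument is correct, and the index bookkeeping checks out: the two displayed identities are precisely the D\"ornte identities $(\ref{dor-l})$ and $(\ref{dor-r})$ for the $(2n-1)$-ary operation $f_{(2)}$ at positions $2n+1-i$ and $2n+1-j$, which for $i,j\in\{2,\ldots,n\}$ lie in $\{n+1,\ldots,2n-1\}\subseteq\{2,\ldots,2n-1\}$, and since $2n-1\geqslant 3$ whenever $n\geqslant 2$, Theorem~\ref{rem} is indeed applicable to $(G,f_{(2)},\widehat{\;}\,)$. The descent step also goes through: the cancellation of the trailing block $\stackrel{(n-1)}{c}$ follows either from the injectivity of $t\mapsto f_{(2)}(t,\stackrel{(n-1)}{c},\stackrel{(n-1)}{c})$ (unique solvability of $f_{(2)}$ at the first place), or by applying the now universally valid D\"ornte identity $f_{(2)}(t,\stackrel{(2n-3)}{c},\widehat{c}\,)=t$ to both sides, once one notes that $\widehat{c}$ must coincide with the $f_{(2)}$-skew of $c$. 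Be aware, though, that the paper offers no proof of this theorem at all: it only points to ``the same method as in the proof of Theorem 2 in \cite{DGG}'', which is a direct verification of solvability carried out entirely at arity $n$ from the modified identities, not a citation of Theorem~\ref{rem}. Your route is organized differently: you invoke Theorem~\ref{rem} as a black box at arity $2n-1$ and then add a short descent from $(G,f_{(2)})$ back to $(G,f)$. This buys a proof that is self-contained within the survey's own framework and makes transparent exactly why the hypothesis can be weakened from $n>2$ to $n\geqslant 2$ (the skewed operation lives at arity $2n-1\geqslant 3$ even when $n=2$); the price is the extra descent lemma, which the direct method avoids by never changing arity. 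Your closing claim about what the author's remark ``refers exactly to'' is speculation rather than mathematics, but it does not affect the validity of the proof.
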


\medskip

Other systems of identities defining the variety of $n$-ary groups
one can find in \cite{D'95} and \cite{Usan'03}.

\section{Skew elements}

An {\it $n$-ary power} of $x$ in an $n$-ary group $(G,f)$ is
defined in the following way: $x^{<0>}=x$
 and
$x^{<k+1>}=f(\overset{(n-1)}{x},x^{<k>})$ for all $k>0$. \
$x^{<-k>}$ is an element $z$ such that
$f(x^{<k-1>},\overset{(n-2)}{x},z)=x^{<0>}=x$. Then
$\overline{x}=x^{<-1>}$, $\overline{\overline{x}}=x^{<n-3>}$ and
\begin{equation}
f(x^{<k_1>},\ldots,x^{<k_n>})=x^{<k_1+\ldots+k_n+1>} \label{p1}
\end{equation}
\begin{equation}
(x^{<k>})^{<t>}=x^{<kt(n-1)+k+t>}.\label{p2}
\end{equation}

Now, putting $\overline{x}^{(0)}=x$ and denoting by
$\overline{x}^{(s+1)}$ the skew element to $\overline{x}^{(s)}$,
we obtain the sequence of elements: $x,\, \overline{x}^{(1)},
\overline{x}^{(2)},\overline{x}^{(3)},\overline{x}^{(4)}$ and so
on. In a $4$-ary group derived from the additive group
$\mathbb{Z}_8$ we have $\overline{x}\equiv 6x({\rm mod }\,8)$,
$\overline{\overline{x}}\equiv 4x({\rm mod}\, 8)$ and
$\overline{x}^{(s)}=0$ for every $s>2$, but in an $n$-ary group
derived from the additive group of integers
$\overline{x}^{(s)}\ne\overline{x}^{(t)}$ for all $s\ne t$. Any
subgroup containing $x$ contains also $\overline{x}$ and all
$\overline{x}^{(s)}$. The order of the smallest subgroup
containing $x$ is called the {\it $n$-ary order of $x$} and is
denoted by ord$_n(x)$. It is the smallest positive integer $k$
such that $x^{<k>}=x$ (cf. \cite{post}). Obviously

\begin{center}
ord$_n(x)\geqslant{\rm ord}_n({\overline{x}})\geqslant{\rm
ord}_n(\overline{x}^{(2)})\geqslant{\rm
ord}_n(\overline{x}^{(3)})\geqslant\ldots$
\end{center}
In fact ord$_n(\overline{x})$ is a divisor of ord$_n(x)$.

In connection with this K. G{\l}azek posed in 1978 the following
question:

\medskip\noindent
{\bf Question 1.} {\it When ord$_n(x)={\rm ord}_n({\overline{x}})$
?}

\medskip The first partial answer was given in \cite{ww'79}: {\it If an $n$-ary
group $(G,f)$ has a finite order relatively prime to $n-2$, then
ord$_n(x)={\rm ord}_n({\overline{x}})$ \ for all $x\in G$}. The
full answer was found two years later (cf. \cite{DD'81}): {\it in
the case when ord$_n(x)$ is finite, ord$_n(x)={\rm
ord}_n({\overline{x}})$ if and only if ord$_n(x)$ and $n-2$ are
relatively prime}.

Examples of infinite $n$-ary groups in which all elements have the
same $n$-ary order $k>1$ are given in \cite{auto}. Such groups are
a set-theoretic union of disjoint isomorphic subgroups of order
$k$. The skew element is idempotent, i.e.,
ord$_n(\overline{x})=1$, if and only if ord$_n(x)$ is a divisor of
$n-2$ (cf. \cite{DD'81}). If an $n$-ary group has a finite order
$g$ and every prime divisor of $g$ is a divisor of $n-2$, then for
every element $x$ of this group there exists a natural number $t$
such that ord$_n(\overline{x}^{(t)})=1$.

\begin{theorem} {\bf (I.M.Dudek, W.A.Dudek, 1981)}\newline
Let ord$_n(x)=p_1^{\beta_1}p_2^{\beta_2}\ldots p_m^{\beta_m}$,
where $p_1,p_2\ldots,p_m$ are primes. Then
$\lim\limits_{t\to\infty}{\rm ord}_n(\overline{x}^{(t)})=1$ or
$\lim\limits_{t\to\infty}{\rm
ord}_n(\overline{x}^{(t)})=p_1^{\beta_1}p_2^{\alpha_2}\ldots
p_k^{\alpha_k}$, where primes $p_1,p_2\ldots,p_k$, $k\leqslant m$,
are not divisors of $n-2$.
\end{theorem}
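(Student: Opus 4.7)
The plan is to reduce everything to a cyclic computation inside $\langle x\rangle$. Set $N=\mathrm{ord}_n(x)$; then $\langle x\rangle=\{x^{<0>},x^{<1>},\dots,x^{<N-1>}\}$, and by~(\ref{p1}) the bijection $j\mapsto x^{<j>}$ identifies $\langle x\rangle$ with $(\mathbb{Z}_N,f)$, where $f(j_1,\dots,j_n)=j_1+\cdots+j_n+1\pmod{N}$. Every iterated skew $\overline{x}^{(s)}$ lies in $\langle x\rangle$; moreover, since $\mathrm{ord}_n(\overline{x})$ divides $\mathrm{ord}_n(x)$, the sequence $\mathrm{ord}_n(\overline{x}^{(s)})$ is a non-increasing chain of divisors of $N$, hence eventually constant, so the limit exists. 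The task is to compute $\mathrm{ord}_n(\overline{x}^{(s)})$ explicitly and read off its stable value.

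In the cyclic model, solving $f(\underbrace{j,\dots,j}_{n-1},z)=j$ yields $\overline{j}=-(n-2)j-1\pmod{N}$. Writing $\overline{x}^{(s)}=x^{<a_s>}$, this produces the recurrence
\[
a_0=0,\qquad a_{s+1}=-(n-2)\,a_s-1.
\]
The key algebraic step is the substitution $b_s=(n-1)a_s+1$: the affine recursion for $a_s$ collapses into the geometric one $b_{s+1}=-(n-2)\,b_s$, with $b_0=1$, so $b_s=(-(n-2))^s$. I expect this telescoping to be the only nontrivial manipulation in the argument.

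Next, identity~(\ref{p2}) gives $(x^{<a>})^{<t>}=x^{<at(n-1)+a+t>}$, so $(x^{<a>})^{<t>}=x^{<a>}$ holds iff $t\bigl((n-1)a+1\bigr)\equiv 0\pmod{N}$; the least positive such $t$ equals $N/\gcd\bigl((n-1)a+1,\,N\bigr)$. Applying this with $a=a_s$ and invoking $b_s=(-(n-2))^s$ yields the clean formula
\[
\mathrm{ord}_n(\overline{x}^{(s)})=\frac{N}{\gcd\bigl((n-2)^s,\,N\bigr)}.
\]

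To extract the limit, write $N=p_1^{\beta_1}\cdots p_m^{\beta_m}$. For each prime $p_i\mid N$, the $p_i$-adic valuation $v_{p_i}\bigl(\gcd((n-2)^s,N)\bigr)=\min\bigl(\beta_i,\,s\cdot v_{p_i}(n-2)\bigr)$ stabilises as $s\to\infty$: it tends to $\beta_i$ when $p_i\mid n-2$, and is constantly $0$ when $p_i\nmid n-2$. Hence the limit of $\gcd((n-2)^s,N)$ is $\prod_{p_i\mid n-2}p_i^{\beta_i}$, and
\[
\lim_{s\to\infty}\mathrm{ord}_n(\overline{x}^{(s)})=\prod_{p_i\nmid n-2}p_i^{\beta_i},
\]
which equals $1$ precisely when every prime divisor of $N$ divides $n-2$; otherwise, relabeling the primes so that $p_1,\dots,p_k$ are exactly those not dividing $n-2$ puts the limit into the form $p_1^{\beta_1}\cdots p_k^{\beta_k}$ asserted by the theorem. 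The only real obstacle I foresee is spotting the substitution $b_s=(n-1)a_s+1$; once that is in hand, the remainder is a routine valuation argument.
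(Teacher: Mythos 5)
Your argument is correct and follows essentially the route the paper indicates: the survey states the theorem without proof, but it records the key ingredient $\overline{x}^{(m)}=x^{<S_m>}$ with $S_m=\frac{(2-n)^m-1}{n-1}$ (from the 1983 paper on autodistributive $n$-groups), which is exactly the quantity $a_s$ you rederive via the substitution $b_s=(n-1)a_s+1$. From there your formula $\mathrm{ord}_n(\overline{x}^{(s)})=N/\gcd\bigl((n-2)^s,N\bigr)$ and the valuation computation of the limit are correct, consistent with all the special cases quoted in the text (e.g.\ $\mathrm{ord}_n(\overline{x})=\mathrm{ord}_n(x)$ iff $\gcd(N,n-2)=1$), and in fact yield the sharper conclusion that the limit is $\prod_{p_i\nmid n-2}p_i^{\beta_i}$.
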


In \cite{auto} it is proved that $\overline{x}^{(m)}=x^{<S_m>}$,
where $S_m=\frac{(2-n)^m-1}{n-1}$. So, $\overline{x}^{(m)}=x$ if
and only if ord$_n(x)$ divides $S_m$. The natural question is:
{\it for which $n$-ary groups there exists fixed $m\in\mathbb{N}$
such that $\overline{x}^{(m)}=\overline{y}^{(m)}$ for all $x,y\in
G$} (cf. \cite{D'88}). For $m=1$ the full answer is given in
\cite{D'90}. For $m>1$ we have only a partial answer. Namely, an
$n$-ary group $(G,f)$ in which
$\overline{x}^{(m)}=\overline{y}^{(m)}$ holds for all $x,y\in G$
is torsion free and its exponent is a divisor of $S_m^2-S_m$ (cf.
\cite{sokh}).

An $n$-ary group is said to be {\it semiabelian} if it satisfies
the identity
\[
f(x_1^n)=f(x_n,x_2^{n-1},x_1) .
\]
The class of all semiabelian $n$-ary groups coincides  with the
class of {\it medial} ({\it entropic}) $n$-ary groups, i.e.,
$n$-ary groups satisfying the identity
\[
f(f(x_{11}^{1n}),f(x_{21}^{2n}),\ldots,f(x_{n1}^{nn}))=
f(f(x_{11}^{n1}),f(x_{12}^{n2}),\ldots,f(x_{1n}^{nn})).
\]
This means that the matrix $[x_{ij}]_{n\times n}$ can be read by
rows or by columns.

Some authors use also the term {\it abelian} instead of {\it
semiabelian} and consider such $n$-ary groups as a special case of
the so-called abelian (commutative) general algebras. This implies
that every $n$-ary subgroup of a semiabelian $n$-ary group is a
block of some congruence of this group and the lattice of all
$n$-ary subgroups of this group is modular.

An $n$-ary group $(G,f)$ is semiabelian if and only if there
exists an element $a\in G$ such that
$f(x,\stackrel{(n-2)}{a},y)=f(y,\stackrel{(n-2)}{a},x)$ for all
$x,y\in G$ (cf. \cite{D'80}). This means that for $n=2$, a
semiabelian $n$-ary (i.e., binary) group is commutative. For $n>2$
a commutative $n$-ary group is defined as a group in which
$f(x_1^n)=f(x_{\sigma(1)},x_{\sigma(2)},\ldots,x_{\sigma(n)})$ for
any permutation $\sigma$ of $\{1,2,\ldots,n\}$. The class of
commutative $n$-ary groups is described by J. Timm (cf.
\cite{Timm}).

K. G{\l}azek and B. Gleichgewicht observed in \cite{GG'77} that in
any semiabelian $n$-ary group we have
\begin{equation}
\overline{f(x_1^n)}=f(\overline{x}_1,\overline{x}_2,\ldots,\overline{x}_n),
\label{shom}
\end{equation}
which means that in these $n$-groups the operation
$\,\bar{}:x\to\overline{x}$ is an endomorphism. Also
$h^s(x)=\overline{x}^{(s)}$ is an endomorphism. The converse is
not true. So, in semiabelian $n$-ary groups
\[
 G^{(s)}=\{\overline{x}^{(s)}|\,x\in G \}
\]
is an $n$-ary subgroup for every natural $s$. (It is an $n$-ary
subgroup in any $n$-ary group satisfying (\ref{shom}), not
necessary semiabelian.) Obviously $G\supset G^{(1)}\supset
G^{(2)}\ldots$ It is clear that for any finite $n$-ary group
$(G,f)$ there exists $t\in\mathbb{N}$ such that $G^{(s)}=G^{(t)}$
for $s\geqslant t$. On the other hand, an $n$-ary group derived
from the additive group of integers is a simple example of $n$-ary
group in which $G^{(s)}\ne G^{(t)}$ for all $s\ne t$ ($G^{(s)}$
contains all integers divided by $(n-2)^s$. The question (Problem
5 in \cite{D'88}) on the characterization of classes of $n$-ary
groups satisfying the descending chain condition for $G^{(s)}$ is
open.

Note that in some $n$-ary groups $(G,f)$ the operation
$\,\bar{}:x\to\overline{x}$ induces a cyclic subgroup in the group
of all automorphisms of $(G,f)$ (cf. \cite{D'88}). Moreover this
subgroup is invariant in the group $Aut(G,f)$ and in the group of
all splitting-automorphisms in the sense of P{\l}onka (cf.
\cite{Plo}), i.e., automorphisms satisfying the identity
$h(f(x_1^n))=f(x_1^{i-1},h(x_i),x_{i+1}^n)$.

\medskip
The natural question is:

\medskip\noindent
{\bf Question 2.} {\it When the operation
$\,\bar{}:x\to\overline{x}$ is an endomorphism?}

\medskip The first answer was given in \cite{D'01}:

\begin{proposition} {\bf (W.A.Dudek, 2001)}\label{P8}\newline
The operation $\,\bar{}:x\to\overline{x}$ is an endomorphism of an
$n$-ary group $(G,f)$ if and only if there exists an element $a\in
G$ such that

\begin{enumerate}
\item[$(i)$] \ $\overline{f(x,a,\ldots,a,y)}=
f(\overline{x},\overline{a},\ldots,\overline{a},\overline{y})$,
\item[$(ii)$] \ $\overline{f(\overline{a},x,a,\ldots,a)}=
f(\overline{\overline{a}},\overline{x},\overline{a},\ldots,\overline{a})$,
\item[$(iii)$] \ $\overline{f(\overline{a},\overline{a},\ldots,\overline{a})}=
f(\overline{\overline{a}},\overline{\overline{a}},\ldots,\overline{\overline{a}})$
\end{enumerate}
for all $x,y\in G$.
\end{proposition}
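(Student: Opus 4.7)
The forward implication is trivial: if $\bar{\,}$ is an endomorphism of $(G,f)$, then $\overline{f(x_1^n)}=f(\bar x_1,\ldots,\bar x_n)$ holds on every input, so specializing the $x_i$ to the appropriate mix of $a$, $\bar a$, $x$, $y$ immediately produces (i), (ii) and (iii) for any chosen $a\in G$.

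For the converse, assume (i), (ii), (iii) hold for some fixed $a$. The strategy is to extend these three specialised identities to the full homomorphism identity $\overline{f(x_1^n)}=f(\bar x_1,\ldots,\bar x_n)$ by a reduction that expresses an arbitrary $n$-ary product as a long product of blocks each fitting one of the three templates. I would begin by collecting supporting identities. Setting $x=y=a$ in (i) gives $\overline{f(\stackrel{(n)}{a})}=f(\stackrel{(n)}{\bar a})$, which together with (iii) fixes the behaviour of $\bar{\,}$ on the ``pure $a$'' and ``pure $\bar a$'' configurations. D\"ornte's identities $(\ref{dor-r})$ and $(\ref{dor-l})$, applied at $x=a$ and at $x=\bar a$, then supply the cancellation rules $f(\stackrel{(n-2)}{a},\bar a,y)=y$, $f(y,\stackrel{(n-2)}{a},\bar a)=y$ (and their $\bar a$-versions), which I would use to insert and later absorb $a$-padded blocks without altering the value of a long product.

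The core step is a ``padding and unfolding'' argument. Using solvability, each $x_i$ admits a representation of the form $x_i=f(\bar a,\stackrel{(n-2)}{a},u_i)$ for a unique $u_i$ (and symmetric variants on the right). Substituting into $f(x_1^n)$ and repeatedly applying the associativity law $(\ref{assolaw})$, I would rewrite $f(x_1^n)$ as a long product $f_{(k)}$ whose constituent $n$-ary blocks are arranged so that each one has exactly the shape $f(u,\stackrel{(n-2)}{a},v)$ occurring in (i), the shape $f(\bar a,u,a,\ldots,a)$ of (ii), or the all-$\bar a$ shape of (iii). Applying $\bar{\,}$ block by block via the three hypotheses and reassembling the resulting long product of skews (again via associativity and the D\"ornte cancellations) gives $f(\bar x_1,\ldots,\bar x_n)$.

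The main obstacle is the combinatorial bookkeeping in this reduction. Hypotheses (i)--(iii) do not give a homomorphism property on a generic $n$-ary block; they commute $\bar{\,}$ only with blocks of three very specific shapes. Hence at every stage where the skew operation is pushed across a cut in the long product one must verify that the two newly-exposed blocks are of a recognised shape, or can be brought to such via the cancellation identities above. Choosing a padding scheme for which this holds uniformly across the whole long product, and treating the initial and terminal blocks (which involve $\bar a$ and $\overline{\bar a}$) via (ii) and (iii), is the delicate content; the rest is routine manipulation with $(\ref{assolaw})$, $(\ref{dor-r})$ and $(\ref{dor-l})$.
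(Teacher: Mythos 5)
Your forward direction is fine, but the converse is a plan rather than a proof, and the plan as written does not close up. Three concrete problems. First, the proposed representation $x_i=f(\overline{a},\stackrel{(n-2)}{a},u_i)$ is vacuous: by D\"ornte's identity $(\ref{dor-r})$ with $i=2$ this forces $u_i=x_i$, so the ``padding'' inserts neutral sequences but by itself gives no leverage; after re-bracketing, the blocks you obtain (e.g.\ $f(x_1,\overline{a},\stackrel{(n-3)}{a},\cdot)$) are not of the three recognised shapes. Second, and more seriously, the recursion does not propagate: hypothesis $(i)$ turns $\overline{f(x,\stackrel{(n-2)}{a},y)}$ into $f(\overline{x},\stackrel{(n-2)}{\overline{a}},\overline{y})$, a block padded with $\overline{a}$ rather than $a$, and $(ii)$ likewise produces a block involving $\overline{\overline{a}}$ and $\overline{a}$; none of your hypotheses says anything about how $\bar{\,}$ interacts with $\overline{a}$-padded blocks, so after one application of the bar the ``newly-exposed blocks'' are no longer of a recognised shape and the blockwise induction stalls. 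Third, $(i)$ exhibits $\bar{\,}$ as a homomorphism from $ret_a(G,f)$ to $ret_{\overline{a}}(G,f)$; these two retracts are isomorphic but not identical, and reconciling them is additional work you do not mention. You have correctly located the hard part (``the delicate content'') but have not supplied it, and the difficulties above suggest the padding-and-unfolding scheme cannot be made to work uniformly without extra identities.

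The route that actually succeeds, and the one the three conditions are tailored to, goes through the Hossz\'u--Gluskin theorem: with $x\cdot y=f(x,\stackrel{(n-2)}{a},y)$, $\varphi(x)=f(\overline{a},x,\stackrel{(n-2)}{a})$ and $b=f(\stackrel{(n)}{\overline{a}})$ one has $f(x_1^n)=x_1\cdot\varphi(x_2)\cdot\ldots\cdot\varphi^{n-1}(x_n)\cdot b$, and conditions $(i)$, $(ii)$, $(iii)$ are precisely the statements that $\bar{\,}$ is compatible with $\cdot$, with $\varphi$, and with $b$ respectively. Once $\bar{\,}$ is written explicitly in terms of $(\cdot,\varphi,b)$, the general identity $\overline{f(x_1^n)}=f(\overline{x}_1,\ldots,\overline{x}_n)$ follows by direct computation. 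If you want to salvage your approach, you should first derive from $(i)$--$(iii)$ the corresponding $\overline{a}$-padded and mixed identities (this is where the real work lies), or simply switch to the $HG$-algebra computation.
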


The last condition can be omitted. Indeed, using (\ref{p1}),
(\ref{p2}) and the fact that $\overline{a}=a^{<-1>}$ and
$\overline{\overline{a}}=a^{<n-3>}$ for every $a\in G$, it is not
difficult to see that the left and right side of $(iii)$ are equal
to $a^{<n^2-3n+1>}.$ So, $(iii)$ is valid in any $n$-ary group.

\begin{corollary}{\it
The operation $\,\bar{}:x\to\overline{x}$ is an endomorphism of an
$n$-ary group $(G,f)$ if and only if the equations

\begin{enumerate}
\item[$(i)$] \ $\overline{f(x,a,\ldots,a,y)}=
f(\overline{x},\overline{a},\ldots,\overline{a},\overline{y})$,
\item[$(ii)$] \ $\overline{f(\overline{a},x,a,\ldots,a)}=
f(\overline{\overline{a}},\overline{x},\overline{a},\ldots,\overline{a})$,
\end{enumerate}
hold for all $x,y\in G$ and some fixed $a\in G$.}
\end{corollary}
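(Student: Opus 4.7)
The plan is to deduce the Corollary directly from Proposition~\ref{P8} by showing that condition~$(iii)$ is redundant: it holds automatically in every $n$-ary group, and not merely when $\,\bar{}\,$ is an endomorphism. Once this is established, Proposition~\ref{P8} reduces to the statement of the Corollary. The ``only if'' direction of the Corollary (endomorphism $\Rightarrow$ $(i),(ii)$) is an immediate specialization of the homomorphism property $\overline{f(x_1^n)}=f(\overline{x}_1,\ldots,\overline{x}_n)$ and requires no further argument, so the whole task is the ``if'' direction, which is nothing more than an appeal to Proposition~\ref{P8} once $(iii)$ is known to be a universal identity.

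To verify that $(iii)$ holds in every $n$-ary group, the plan is to rewrite both sides as $n$-ary powers of $a$ and compare exponents. Using $\overline{a}=a^{<-1>}$ together with (\ref{p1}), the inner product on the left becomes
\[
 f(\overline{a},\ldots,\overline{a})=f(a^{<-1>},\ldots,a^{<-1>})=a^{<1-n>},
\]
and then (\ref{p2}) applied with $k=1-n$ and $t=-1$ yields
\[
 \overline{f(\overline{a},\ldots,\overline{a})}=(a^{<1-n>})^{<-1>}=a^{<n^2-3n+1>}.
\]
For the right-hand side of $(iii)$, the identity $\overline{\overline{a}}=a^{<n-3>}$ together with (\ref{p1}) gives
\[
 f(\overline{\overline{a}},\ldots,\overline{\overline{a}})=a^{<n(n-3)+1>}=a^{<n^2-3n+1>},
\]
so both sides of $(iii)$ coincide in any $n$-ary group.

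The only real difficulty lies in the bookkeeping of exponents in (\ref{p2}) when the parameters are negative or depend on $n$: with $k=1-n$ and $t=-1$ the expression $kt(n-1)+k+t$ evaluates to $(n-1)^2-n=n^2-3n+1$, which matches $n(n-3)+1$ on the other side. This is a routine check rather than a conceptual obstacle, and once it is performed the Corollary follows at once from Proposition~\ref{P8}.
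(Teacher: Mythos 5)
Your proposal is correct and follows exactly the route the paper takes: the text preceding the corollary disposes of condition $(iii)$ by noting that, via $\overline{a}=a^{<-1>}$, $\overline{\overline{a}}=a^{<n-3>}$ and the power rules (\ref{p1})--(\ref{p2}), both sides of $(iii)$ equal $a^{<n^2-3n+1>}$ in any $n$-ary group, after which the corollary is immediate from Proposition~\ref{P8}. Your exponent bookkeeping ($(1-n)(-1)(n-1)+(1-n)-1=n^2-3n+1=n(n-3)+1$) just makes explicit what the paper leaves as ``it is not difficult to see.''
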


Another answer is given in \cite{sokh} and \cite{Shch}:

\begin{proposition}\label{P9} {\bf (F.M.Sokhatsky,
2003)}\newline The operation $\,\bar{}:x\to\overline{x}$ is an
endomorphism of an $n$-ary group $(G,f)$ if and only if the
following two identities are satisfied:
\[\arraycolsep=.5mm
\begin{array}{rl}
f(\stackrel{(n-1)}{u},f(\stackrel{(n-2)}{x},u,u))=&
f(f(\stackrel{(n-2)}{x},u,u),\stackrel{(n-1)}{u}),\\[2pt]
f(f(x,\!\!\stackrel{(n-2)}{u}\!\!,y),\dots,f(x,\!\!\stackrel{(n-2)}{u}\!\!,y),u,u)=&
f(\stackrel{(n-2)}{y}\!\!,f(u,f(x,\!\!\stackrel{(n-1)}{u}),\dots,
f(x,\!\!\stackrel{(n-1)}{u}),x,u),u).
\end{array}
\]
\end{proposition}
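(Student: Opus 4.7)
My plan is to work in the binary retract. Fix $u\in G$; the operation $x\ast_u y := f(x,\stackrel{(n-2)}{u},y)$ turns $G$ into a group with identity $\overline{u}$ (by D\"ornte's identities \eqref{dor-r}, \eqref{dor-l}). In these retract coordinates, expressions of the form $f(\stackrel{(n-2)}{x},u,u)$ and $f(x,\stackrel{(n-1)}{u})$ become simple binary products, and the skew element $\overline{x}$ admits a concrete description via the inverse in $\ast_u$; this translation between $n$-ary and binary data is the main technical tool.

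For necessity, assume $\bar{\,}$ is an endomorphism. Each proposed identity is an equality of derived $(2n-1)$-ary expressions. I would rewrite each side using the power rules \eqref{p1}, \eqref{p2}, the facts $\overline{x}=x^{<-1>}$ and $\overline{\overline{x}}=x^{<n-3>}$, and the endomorphism property to collapse the $f$-subterms involving repeated copies of a single element into explicit $n$-ary powers; both identities should then reduce to equalities of the form $a^{<k>}=a^{<k>}$ for some generic element $a$, verifiable by arithmetic on the exponents.

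For sufficiency, I would proceed as follows. First, the first identity, read in $(G,\ast_u)$, yields a commutation relation between $f(\stackrel{(n-2)}{x},u,u)$ and the $u$-translation, which is the partial semiabelianness needed to manipulate the second identity. Next, since $f(x,\stackrel{(n-2)}{u},y)=x\ast_u y$ and $f(x,\stackrel{(n-1)}{u})=x\ast_u u$, I would decode both sides of the second identity as explicit binary expressions in $\ast_u$ and appeal to unique solvability in the retract to extract the \emph{ternary} endomorphism relation
\[
\overline{f(x,\stackrel{(n-2)}{u},y)} \;=\; f(\overline{x},\stackrel{(n-2)}{\overline{u}},\overline{y})
\]
for every $u,x,y\in G$. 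Finally, since $u$ is arbitrary, one iterates this ternary relation across successive argument positions of $f(x_1,\ldots,x_n)$, using associativity to break the $n$-ary product into nested three-variable pieces and re-assembling, thereby obtaining the full endomorphism property $\overline{f(x_1^n)}=f(\overline{x_1},\ldots,\overline{x_n})$.

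The main obstacle is the decoding step in sufficiency: the second identity is deeply nested, with its right-hand side containing $n-3$ copies of $f(x,\stackrel{(n-1)}{u})$ hidden inside another $f$, and converting this to a clean binary expression requires judicious substitutions (for instance specializing $x$ or $y$ to $u$ or $\overline{u}$ and then applying D\"ornte cancellations) before the role of the skew element becomes visible. This is where careful bookkeeping of arities and positions is essential, and where the first identity is silently invoked to justify the rearrangements.
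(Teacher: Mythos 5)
The paper states this proposition without proof (it is quoted from Sokhatsky's article \cite{sokh}), so there is no internal argument to compare against; judged on its own, your sketch has a genuine gap in each direction. For necessity, the plan cannot work as described: both identities are identities in several independent variables ($x$ and $u$ in the first; $x$, $u$ and $y$ in the second), whereas the power calculus (\ref{p1})--(\ref{p2}) together with $\overline{x}=x^{<-1>}$ and $\overline{\overline{x}}=x^{<n-3>}$ only collapses expressions built from a \emph{single} element. A term such as $f(\stackrel{(n-2)}{x},u,u)$ is not an $n$-ary power of any one element, so the two sides cannot ``reduce to $a^{<k>}=a^{<k>}$ for a generic $a$.'' To verify the identities from the endomorphism hypothesis one must use that hypothesis on genuinely mixed arguments; in practice this means passing to the Hossz\'u--Gluskin form $f(x_1^n)=x_1\cdot\varphi(x_2)\cdot\ldots\cdot\varphi^{n-1}(x_n)\cdot b$ of Theorem~\ref{thGH}, writing $\overline{x}$ explicitly in terms of $\varphi$ and $b$, translating the endomorphism property into conditions on the pair $(\varphi,b)$, and checking that the two identities encode exactly those conditions. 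None of that is present in the sketch.

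For sufficiency, what you call the ``decoding step'' is precisely the mathematical content of the theorem, and you defer it entirely. Moreover, even granting the intermediate relation $\overline{f(x,\stackrel{(n-2)}{u},y)}=f(\overline{x},\stackrel{(n-2)}{\overline{u}},\overline{y})$, the final reassembly is not routine: that relation ties the retract at $u$ to the retract at $\overline{u}$, and it has only three free slots while $f(x_1^n)$ has $n$ independent arguments, so ``iterating across argument positions'' does not literally apply. Compare Proposition~\ref{P8}: its conditions likewise involve only terms with one or two free arguments, and the passage from such special instances to the full endomorphism property is exactly the nontrivial part, again handled via the Hossz\'u--Gluskin decomposition. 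As it stands your proposal correctly identifies the useful coordinates (the binary retract) but leaves both halves of the equivalence unproved.
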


\begin{theorem} {\bf (N.A.Shchuchkin,
2006)}\newline For odd $k$, the operation
$\,\overline{\;\,\rule{0pt}{4pt}}^{(k)}:x\to\overline{x}^{(k)}$ is
an endomorphism of an $n$-ary group $(G,f)$ if and only if the
identity:
\[\begin{array}{l}
f_{(n-1)}(x_1,\overset{((n-2)^k)}{x_2},\overset{((n-2)^k)}{x_3},\ldots,\overset{((n-2)^k)}{x_{n+1}},
x_{n+2})=\\[5pt]
\rule{22mm}{0mm}f(x_1,\underbrace{f(x_{n+1},x_n,\!...,x_2),\ldots,
f(x_{n+1},x_n,\!...,x_2)}_{(n-2)^k\ \ times},x_{n+2})\end{array}
\]
is satisfied.
\end{theorem}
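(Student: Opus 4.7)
The plan is to convert the homomorphism property of $\overline{\,}^{(k)}$ into an identity on $n$-ary powers, via the formula $\overline{x}^{(m)}=x^{<S_m>}$ with $S_m=((2-n)^m-1)/(n-1)$ recorded earlier in this section, and then to recognise that power identity---after a reversal of the middle variables---as the long-product identity in the statement.

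First I would note that, since $k$ is odd, the congruence $(n-2)^k\equiv(-1)^k\equiv-1\pmod{n-1}$ guarantees $S_k=-m$ where $m=((n-2)^k+1)/(n-1)$ is a positive integer; writing $N=(n-2)^k$ gives $m(n-1)=N+1$. The condition that $\overline{\,}^{(k)}$ be an endomorphism is thus equivalent to the power identity
\[
f(y_1,\ldots,y_n)^{<-m>}=f(y_1^{<-m>},\ldots,y_n^{<-m>}) \qquad(\ast)
\]
for all $y_1,\ldots,y_n\in G$. Iterating (\ref{p1}) along a long product produces the characterization
\[
z=y^{<-m>}\iff f_{(m)}(\underbrace{y,\ldots,y}_{N+1},z)=y\qquad(y,z\in G),
\]
valid by unique solvability at the last place. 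Substituting $y=f(y_1,\ldots,y_n)$ together with $z=f(y_1^{<-m>},\ldots,y_n^{<-m>})$ rephrases $(\ast)$ as
\[
f_{(m)}\Bigl(\underbrace{f(y_1,\ldots,y_n),\ldots,f(y_1,\ldots,y_n)}_{N+1},\,f(y_1^{<-m>},\ldots,y_n^{<-m>})\Bigr)=f(y_1,\ldots,y_n).\qquad(\diamond)
\]

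The core of the argument is then to identify $(\diamond)$ with the theorem's identity under the index reversal $y_i=x_{n+2-i}$, so that $f(x_{n+1},x_n,\ldots,x_2)=f(y_1,\ldots,y_n)$. For \emph{necessity}, I would pad $(\diamond)$ on both sides by arbitrary boundary arguments $x_1$ and $x_{n+2}$ and then use associativity to inline the copies of $f(y_1,\ldots,y_n)$: the resulting long product realises exactly the ``each $x_j$ repeated $(n-2)^k$ times'' pattern on the left of the theorem's identity, while the single $f(y_1^{<-m>},\ldots,y_n^{<-m>})$ becomes the $(n-2)^k$-fold reversed block $f(x_{n+1},x_n,\ldots,x_2)$ on the right, after reapplying the $y_i^{<-m>}$-characterization and the power rule (\ref{p2}). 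For \emph{sufficiency}, the theorem's identity is specialised in $x_1,x_{n+2}$ and the boundary arguments are absorbed via the D\"ornte identities (\ref{dor-r}) and (\ref{dor-l}) to recover $(\diamond)$, hence $(\ast)$.

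The main obstacle will be the combinatorial bookkeeping in the matching step: one must track the arities $N+m$ and $m$ of the two long products in the statement, verify that inlining $(\diamond)$ produces exactly the $(n-2)^k$-fold patterns, and check that the reversal $y_i\leftrightarrow x_{n+2-i}$ correctly transports the expression $f(y_1,\ldots,y_n)$ into $f(x_{n+1},\ldots,x_2)$. The hypothesis that $k$ is odd enters essentially here: only then is $S_k<0$, so that $(\ast)$ takes inverse form and the reversed block appears; for even $k$ one has $S_k\ge 0$ and the corresponding endomorphism criterion would involve a non-reversed identity of a different shape.
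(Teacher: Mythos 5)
The paper itself contains no proof of this theorem --- it is a survey item quoted from Shchuchkin's article \cite{Shch} --- so your attempt has to be judged on its own terms. Your setup is correct and well chosen: for odd $k$ one indeed has $S_k=-m$ with $m=\big((n-2)^k+1\big)/(n-1)$, so the endomorphism property is exactly your power identity $(\ast)$, and the characterization $z=y^{<-m>}\iff f_{(m)}(\overset{(N+1)}{y},z)=y$ is right. You also correctly repair the arities (the two long products must be $f_{(N+m)}$ and $f_{(m)}$; the printed subscripts only type-check for $k=1$), which is a point in your favour.

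The gap is in the matching step, and it is not mere bookkeeping. Inlining the $N+1$ copies of $f(y_1,\ldots,y_n)$ in $(\diamond)$ by associativity yields the \emph{interleaved} argument string $y_1,\ldots,y_n,y_1,\ldots,y_n,\ldots$, whereas the left side of the theorem's identity under your relabelling is the \emph{blocked, reversed} string $\overset{(N)}{y_n},\overset{(N)}{y_{n-1}},\ldots,\overset{(N)}{y_1}$. These are different argument sequences, and in a noncommutative $n$-ary group associativity cannot pass from one to the other, so the claim that inlining ``realises exactly'' the theorem's pattern is false. In covering-group language the theorem's identity says $y_n^N\cdots y_1^N=(y_1\cdots y_n)^N$ while $(\ast)$ says $(y_1\cdots y_n)^{-N}=y_1^{-N}\cdots y_n^{-N}$; their equivalence is the reversal-of-order-under-inversion law $(a_1\cdots a_r)^{-1}=a_r^{-1}\cdots a_1^{-1}$, and an internal $n$-ary proof of that --- an iterated use of the D\"ornte identities generalizing $\overline{f(x,y,z)}=f(\overline{z},\overline{y},\overline{x})$ --- is precisely the content you have not supplied. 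Already for $n=3$, $k=1$ your $(\diamond)$ reads $f(\overline{y}_1,\overline{y}_2,\overline{y}_3)=\overline{f(y_1,y_2,y_3)}$ while the theorem's identity reduces to semiabelianity $f(y_1,y_2,y_3)=f(y_3,y_2,y_1)$; bridging the two needs D\"ornte's reversal formula for the skew of a product, not associativity. Until that reversal lemma is stated and proved in general, both directions of your argument are incomplete.
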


\begin{theorem} {\bf (N.A.Shchuchkin,
2006)}\newline For even $k$, the operation
$\,\overline{\;\,\rule{0pt}{4pt}}^{(k)}:x\to\overline{x}^{(k)}$ is
an endomorphism of an $n$-ary group $(G,f)$ if and only if the
identity:
\[
f_{(\cdot)}(\underbrace{f(x_1^n),\ldots,f(x_1^n)}_{(n-2)^k})=
f_{(\cdot)}(\overset{((n-2)^k)}{x_1},\overset{((n-2)^k)}{x_2},
\ldots,\overset{((n-2)^k)}{x_n})
\]
is satisfied.
\end{theorem}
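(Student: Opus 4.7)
The plan is to compute each side of the stated identity in closed form and observe that the identity simply transcribes the endomorphism condition
\[
\overline{f(x_1^n)}^{(k)}=f(\overline{x}_1^{(k)},\ldots,\overline{x}_n^{(k)}).
\]
Throughout I rely on \emph{generalized associativity} in the $n$-ary semigroup $(G,f)$: the long product $f_{(\cdot)}$ of any $1+m(n-1)$ elements is independent of the bracketing, so consecutive blocks of the appropriate length may be replaced by their long-product value. This is the classical Post/D\"ornte result, and it is the only real tool I need.

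Set $N:=(n-2)^k$. Since $n-2\equiv-1\pmod{n-1}$ and $k$ is even, $N\equiv 1\pmod{n-1}$, so $N=S_k(n-1)+1$ with $S_k=\frac{(n-2)^k-1}{n-1}=\frac{(2-n)^k-1}{n-1}$. Hence the already-quoted identity $\overline{x}^{(m)}=x^{<S_m>}$ specializes for our even $k$ to $\overline{x}^{(k)}=x^{<S_k>}$, and by the very definition of $n$-ary powers
\[
f_{(S_k)}(\overset{(N)}{a})=a^{<S_k>}=\overline{a}^{(k)} \qquad\text{for every } a\in G.
\]

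Applying this to $a:=f(x_1^n)$ shows at once that the left-hand side of the stated identity equals $\overline{f(x_1^n)}^{(k)}$. For the right-hand side, generalized associativity lets me bracket $f_{(\cdot)}(\overset{(N)}{x_1},\ldots,\overset{(N)}{x_n})$ by first collapsing each consecutive block $\overset{(N)}{x_i}$ into $f_{(S_k)}(\overset{(N)}{x_i})=x_i^{<S_k>}=\overline{x}_i^{(k)}$ — legitimate because $N=S_k(n-1)+1$ — and then applying $f$ once to the $n$ resulting elements. The right-hand side therefore equals $f(\overline{x}_1^{(k)},\ldots,\overline{x}_n^{(k)})$. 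Comparing the two closed forms yields both directions of the equivalence simultaneously.

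The main (and essentially the only) obstacle is bookkeeping: one must verify that the arities line up for the regroupings, i.e.\ that $N=S_k(n-1)+1$ and $nN-1=(n-1)(N+S_k)$, so that both sides are genuine values of $f_{(\cdot)}$; this is immediate from the identity $n(n-2)^k=(n-1)(n-2)^k+(n-2)^k$. No separate ``only if'' and ``if'' arguments are required — once each side of the identity is reduced to its natural closed form, the statement becomes a tautological comparison.
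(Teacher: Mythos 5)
Your reduction is correct, and the arithmetic bookkeeping checks out: with $N=(n-2)^k$ and $S_k=\frac{(n-2)^k-1}{n-1}$ one indeed has $N=S_k(n-1)+1$ and $nN-1=(n-1)(nS_k+1)$, so both sides are legitimate long products, the left side collapses to $\bigl(f(x_1^n)\bigr)^{<S_k>}=\overline{f(x_1^n)}^{(k)}$, and blockwise regrouping of the right side gives $f\bigl(x_1^{<S_k>},\ldots,x_n^{<S_k>}\bigr)=f\bigl(\overline{x}_1^{(k)},\ldots,\overline{x}_n^{(k)}\bigr)$, which makes the stated identity a verbatim transcription of the endomorphism condition. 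Note that the survey states this theorem without proof (it is quoted from Shchuchkin's paper), so there is nothing in the text to compare against; your argument is self-contained modulo the two facts the survey does supply, namely generalized associativity and the formula $\overline{x}^{(m)}=x^{<S_m>}$ with $S_m=\frac{(2-n)^m-1}{n-1}$, and the evenness of $k$ enters exactly where it should, in guaranteeing $(2-n)^k=(n-2)^k$ so that $S_k\geqslant 0$ and $N\equiv 1\pmod{n-1}$.
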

\section{Hossz\'u-Gluskin algebras}

Let $(G,f)$ be an $n$-ary group. Fixing in $f(x_{1}^{n})$ some
$k<n$ elements we obtain a new $(n-k)$-ary operation which in
general is not associative. It is associative only in the case
when these fixed elements are located in some special places (cf.
\cite{DM'82}).

Binary operations of the form $x\ast y=f(x,a_2^{n-1},y)$, where
elements $a_2,\ldots,a_{n-1}\in G$ are fixed, play a very
important role. It is not difficult to see that $(G,\ast)$ is a
group. Fixing different elements $a_2,\ldots,a_{n-1}$ we obtain
different groups. Since all these groups are isomorphic (cf.
\cite{DM'82}) we can consider only one group $(G,\circ)$, where
$x\circ y=f(x,\overset{(n-2)}{a},y)$. This group is denoted by
$ret_a(G,f)$ and is called a \textit{binary retract} of $(G,f)$.
The identity of this group is $\overline{a}$. The inverse element
to $x$ has the form
\[
x^{-1}=f(\overline{a},\overset{(n-3)}{x},\overline{x},\overline{a}).
\]
An $n$-ary group $(G,f)$ is semiabelian only in the case when its
binary retract $ret_a(G,f)$ is commutative.

The strong connection between $n$-ary groups and their binary
retracts was observed for the first time in 1963 by M. Hossz\'u
(cf. \cite{Hosszu}). He proved the following theorem:

\begin{theorem}\label{thGH}
 An $n$-ary groupoid $(G,f)$, $n>2$, is an $n$-ary group if and only if

\begin{enumerate}
\item[$(i)$]  on $G$ one can define a binary operation $\cdot$ such that $(G,\cdot)$
is a group,
\item[$(ii)$]  there exist an automorphism $\varphi$ of $(G,\cdot)$ and $b\in G$
such that $\varphi(b)=b$,
\item[$(iii)$]  $\varphi^{n-1}(x)=b\cdot x\cdot b^{-1}$ holds for every $x\in G$,
\item[$(iv)$]  $f(x_{1}^{n})=x_{1}\cdot \varphi (x_{2})\cdot \varphi
^{2}(x_{3})\cdot \varphi ^{3}(x_{4})\cdot \ldots \cdot \varphi
(x_{n})^{n-1}\cdot b$ for all $x_{1},\ldots ,x_{n}\in G$.
\end{enumerate}
\end{theorem}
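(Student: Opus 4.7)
The plan is to prove both implications, with the substance concentrated in $(\Rightarrow)$.

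For $(\Leftarrow)$, I assume (i)--(iv) and derive associativity and unique solvability of $f$. Expanding both $f(f(x_1^n),x_{n+1}^{2n-1})$ and $f(x_1,f(x_2^{n+1}),x_{n+2}^{2n-1})$ via (iv) and applying $\varphi$ (a homomorphism fixing $b$) to the inner product, the two sides agree after cancellation in $(G,\cdot)$ provided $b\,\varphi(x)=\varphi^n(x)\,b$ for every $x$. This commutation follows at once from (ii)--(iii): $\varphi^n(x)=\varphi(\varphi^{n-1}(x))=\varphi(b)\varphi(x)\varphi(b^{-1})=b\,\varphi(x)\,b^{-1}$. The general $(i,i+1)$-associativity reduces to the same relation and the homomorphism property of $\varphi$. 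Unique solvability at any place $i$ is automatic: in (iv) the unknown $z$ appears as $\varphi^{i-1}(z)$ sandwiched between known group elements, and since $\varphi$ is an automorphism of $(G,\cdot)$, the equation has exactly one solution.

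For $(\Rightarrow)$, fix any $a\in G$ and put $e=\overline{a}$. The discussion preceding the theorem tells us that the binary retract $(G,\circ)$, with $x\circ y=f(x,\overset{(n-2)}{a},y)$, is a group with identity $e$. Set $b=f(\overset{(n)}{e})$ and define
\[
 \varphi(x)=f(e,x,\overset{(n-2)}{e})\circ b^{-1}.
\]
This formula is forced on us: substituting $x_1=e$, $x_2=x$, $x_3=\cdots=x_n=e$ into the desired identity (iv) and using that any candidate automorphism must satisfy $\varphi(e)=e$ pins down $\varphi(x)$ up to the normalisation by $b^{-1}$. Bijectivity of $\varphi$ follows from unique solvability of $f$.

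The heart of the proof is then the verification of the master identity
\[
 f(x_1^n)\;=\;x_1\circ\varphi(x_2)\circ\varphi^2(x_3)\circ\cdots\circ\varphi^{n-1}(x_n)\circ b,\qquad(\ast)
\]
which I would establish by induction on the number of arguments among $x_2,\dots,x_n$ that differ from $e$. The base cases (zero or one non-$e$ arguments) are immediate from the definitions of $\varphi$ and $b$; the inductive step inserts a pair $a,\overline{a}$ via D\"ornte's identities $(\ref{dor-r})$--$(\ref{dor-l})$ and regroups with the $n$-ary associativity law. Once $(\ast)$ is in hand, the remaining items follow by short computations in $(G,\circ)$: $\varphi(b)=b$ by applying $(\ast)$ to the tuple $(e,b,e,\dots,e)$ and comparing with a direct associativity computation of $f_{(2)}(\overset{(2n-1)}{e},\,\cdot\,)$; the homomorphism property $\varphi(x\circ y)=\varphi(x)\circ\varphi(y)$ by specialising $(\ast)$ to $(e,x\circ y,e,\dots,e)$ and rewriting $x\circ y$ via $f$; and the conjugation formula $\varphi^{n-1}(x)=b\cdot x\cdot b^{-1}$ by comparing the expressions for $f(\overset{(n-1)}{e},x)$ and $f(x,\overset{(n-1)}{e})$ obtained from $(\ast)$ on the one hand, and from $n$-ary associativity applied to $f_{(2)}(\overset{(n)}{e},\overset{(n-1)}{e},x)$ on the other.

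The principal obstacle is $(\ast)$ itself, since the right-hand side is a nested binary product in $(G,\circ)$ while $f$ is an $n$-ary operation whose only available axioms are associativity and the D\"ornte identities. Each reduction step must thread several $a,\overline{a}$ pairs into the correct positions and regroup with $n$-ary associativity, and the bookkeeping of the exponents $\varphi^k$ on the right must be matched argument by argument. The cleanest organisation is to prove $(\ast)$ first for tuples of the form $(e,\dots,e,x_k,e,\dots,e)$ with a single non-$e$ entry, thereby extracting an intrinsic description of $\varphi^{k-1}$, and then to extend to the general case by induction on the number of non-$e$ entries, splitting off one argument at a time using associativity.
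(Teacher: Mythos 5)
The paper itself gives no proof of this theorem --- it is quoted as Hossz\'u's result \cite{Hosszu} (reproved by Gluskin \cite{Glu65}) --- so your proposal has to stand on its own. Your $(\Leftarrow)$ direction is correct and essentially complete: the single commutation relation $b\cdot\varphi(x)=\varphi^{n}(x)\cdot b$, which you correctly derive from $\varphi(b)=b$ and $\varphi^{n-1}(x)=b\cdot x\cdot b^{-1}$, is exactly what makes the $(i,i+1)$-associativity computations close up, and unique solvability at each place does follow from bijectivity of $\varphi^{i-1}$ and cancellation in $(G,\cdot)$. The architecture of your $(\Rightarrow)$ direction is also the standard and correct one (retract at $a$, $e=\overline{a}$, $b=f(\overset{(n)}{e})$, a forced $\varphi$, then the master identity $(\ast)$), and your derivations of $\varphi(b)=b$ and of $\varphi^{n-1}(x)=b\circ x\circ b^{-1}$ \emph{from} $(\ast)$ are sound in outline.

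The genuine gap is that $(\ast)$ --- which you yourself call ``the principal obstacle'' --- is never actually proved; everything in the forward direction rests on it. Two concrete problems with the plan as stated. First, the proposed induction on the number of arguments among $x_2,\dots,x_n$ differing from $e$ does not obviously close: to pass from $k$ to $k-1$ non-$e$ entries you must replace some $x_j$ by $e$ and account for the discrepancy, and that discrepancy is precisely the factor $\varphi^{j-1}(x_j)$ whose appearance at position $j$ is the content of $(\ast)$; the step is circular unless you already possess a closed $f$-expression for $\varphi^{j-1}$. Second, your normalized definition $\varphi(x)=f(e,x,\overset{(n-2)}{e})\circ b^{-1}$, while indeed forced by $(\ast)$, makes the iterates $\varphi^{k}$ hard to compute, because each iteration drags in a conjugation-by-$b$ that is itself part of what is being established. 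The workable route (which your last paragraph gestures toward) is to take the unnormalized closed form $\varphi(x)=f(\overline{a},x,\overset{(n-2)}{a})$, verify by the D\"ornte identities that it agrees with your definition and that $\varphi^{k}(x)$ equals the explicit long product $f_{(k)}(\overset{(k)}{\overline{a}},x,\overset{(n-2)}{a},\dots,\overset{(n-2)}{a})$, and then obtain $(\ast)$ in one pass by inserting the blocks $\overset{(n-2)}{a},\overline{a}$ between consecutive arguments of $f(x_1^n)$ (legitimate by $f(y,\overset{(n-2)}{a},\overline{a})=y$) and regrouping the resulting $f_{(n)}$-expression. Without that computation carried out, the forward implication --- the substantive half of the theorem --- is not established.
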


\medskip

Two years later, this theorem was proved by L. M. Gluskin in more
general form (cf. \cite{Glu65}). Now this theorem is known as the
Hossz\'u-Gluskin Theorem. Some important generalization of this
theorem one can find in \cite{DM'82}, \cite{sok} and
\cite{Usan'03}.

The algebra $(G,\cdot,\varphi,b)$ of the type $(2,1,0)$, where
$(G,\cdot)$ is a (binary) group, $b\in G$ is fixed, $\varphi\in
Aut(G,\cdot)$, $\varphi(b)=b$ and $\varphi^{n-1}(x)=b\cdot x\cdot
b^{-1}$ for every $x\in G$ is called a \textit{Hossz\'u-Gluskin
algebra} (briefly: an \textit{$HG$-algebra}). We say that an
$HG$-algebra $(G,\cdot,\varphi,b)$ is \textit{associated} with an
$n$-ary group $(G,f)$ if the the last condition of Theorem
\ref{thGH} is satisfied. In this case we also say that an $n$-ary
group $(G,f)$ is \textit{$\langle\varphi,b\rangle$-derived} from
the group $(G,\cdot)$. If $\varphi(x)=x$ for every $x\in G$ we say
that $(G,f)$ is {\it $b$-derived} from $(G,\cdot)$. The
systematical study of connections between $n$-ary groups
$b$-derived from given binary group was initiated by K. G{\l}azek
and J. Michalski in \cite{GM'83} and continued by J. Michalski, W.
A. Dudek, M. Pop and many others. Connections between $(G,f)$ and
$n$-ary groups $\langle\varphi,b\rangle$-derived from $ret_a(G,f)$
are described by W. A. Dudek and J. Michalski in \cite{DM'82,
DM'84, DM'85}. All commutative $n$-ary groups are $b$-derived from
some of their retracts (cf. \cite{Timm}).

Let $\mathfrak{G}=(G,f,\bar{}\;)$ be a semiabelian $n$-ary group.
Then the $HG$-algebra associated with $\mathfrak{G}$ has a
commutative group operation denoted by $+$.

Let $\mathfrak{H}=(G,+,\varphi,b)$ be associated with
$\mathfrak{G}$ and $\mathfrak{G}_a=(G,f,\bar{}\;,a)$. Then
$\mathfrak{H}$ and $\mathfrak{G}_a$ are term equivalent (cf.
\cite{DG}) and
\[
\arraycolsep=.5mm
\begin{array}{rl}
-y&=f(\overline{a},\stackrel{(n-3)}{y},\overline{y},\overline{a}\,),\\[4pt]
x+y&=f(x,\stackrel{(n-3)}{(-y)},\overline{(-y)},\overline{a}\,),\\[4pt]
\varphi(x)&=f(\overline{a},x,\stackrel{(n-2)}{a}),\\[4pt]
{\rm and } \ \ \ \ b&=f(\stackrel{(n)}{\overline{a}}).
\end{array}
\]

To describe all term operations of $\mathfrak{G}_a$ by using the
language of $HG$-algebras we denote by $g_i$ the following
operation
\begin{equation}\label{g_i}
g_i(x)=k_{i1}\varphi^{l_{i1}}(x)+k_{i2}\varphi^{l_{i2}}(x)+\ldots
+k_{it}\varphi^{l_{it}}(x),
\end{equation}
where $t,l_{i1},\ldots,l_{it}$ are  non-negative integers and
$k_{i1},\ldots,k_{it}\in\mathbb{Z}$.

\begin{theorem} {\bf (W.A.Dudek, K.G{\l}azek, 2006)}\newline
Let $\,\mathfrak{H}=(G,+,\varphi,b)$ be the $HG$-algebra
associated with a semiabelian $n$-ary group $\mathfrak{G}$. Then
all unary term operations of $\mathfrak{H}$ $($and of
$\;\mathfrak{G}_a\,)$ are of the form
\[
g(x)=g_i(x)+k_g b
\]
for some $g_i$ of the form $(\ref{g_i})$ and $k_g\in\mathbb{Z}$.
\end{theorem}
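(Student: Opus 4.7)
The plan is to proceed by induction on the complexity of a unary term $t(x)$ in the language of the $HG$-algebra $\mathfrak{H}=(G,+,-,\varphi,b)$, where $+,-$ are the operations of the commutative group $(G,+)$. Since $\mathfrak{H}$ and $\mathfrak{G}_a$ are term equivalent, as recalled by the explicit formulas displayed just above the theorem, it is enough to verify the claim inside $\mathfrak{H}$. Every unary term is built from the variable $x$, the constant $b$, and finitely many applications of $+$, $-$, and $\varphi$.

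For the base cases, $t(x)=x$ fits the asserted format with $g_i(x)=\varphi^{0}(x)$ and $k_{g}=0$, while $t(x)=b$ fits with $g_i\equiv 0$ and $k_g=1$. For the inductive step, suppose that $g(x)=g_1(x)+k_g\,b$ and $h(x)=g_2(x)+k_h\,b$ are already in the asserted form; I check that each of the three generating operations preserves this form. For the sum, $(g+h)(x)=g_1(x)+g_2(x)+(k_g+k_h)\,b$, where commutativity of $+$ is used to slide the two $b$-summands together on the right, and the concatenation of the monomials of $g_1$ and $g_2$ is again of the shape $(\ref{g_i})$. For the negative, $(-g)(x)=(-g_1)(x)+(-k_g)\,b$ with $(-g_1)(x)=\sum_{j}(-k_{ij})\varphi^{l_{ij}}(x)$, still of shape $(\ref{g_i})$. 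For $\varphi$, one has $\varphi(g(x))=\varphi(g_1(x))+k_g\,\varphi(b)=\sum_{j}k_{ij}\,\varphi^{l_{ij}+1}(x)+k_g\,b$, using that $\varphi$ is an endomorphism of $(G,+)$ and that $\varphi(b)=b$. In each case the result lies in the asserted normal form, closing the induction.

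The only non-formal ingredient is the commutativity of $(G,+)$, which is exactly the semiabelian hypothesis on $\mathfrak{G}$; it is what allows the $b$-summands produced by nested applications of $\varphi$ and $+$ to be transported past the $\varphi^{l_{ij}}(x)$ summands and collected into a single trailing $k_g\,b$. I do not foresee a serious obstacle: the only real bookkeeping point is the interaction of $\varphi$ with $b$, which is trivialised by the identity $\varphi(b)=b$ built into the definition of $HG$-algebra. If one wishes a tighter normal form, the identity $\varphi^{n-1}(x)=b+x-b=x$, valid because $(G,+)$ is commutative, permits reducing all exponents $l_{ij}$ modulo $n-1$; this refinement is not needed for the stated form.
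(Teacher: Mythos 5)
Your induction on the structure of unary terms in the signature $(+,-,\varphi,b)$ is correct: the base cases and the closure of the normal form $g_i(x)+k_gb$ under $+$, $-$, and $\varphi$ (via commutativity of $+$, $\varphi$ being an endomorphism, and $\varphi(b)=b$) are exactly the points that need checking, and your optional reduction of exponents modulo $n-1$ using $\varphi^{n-1}=\mathrm{id}$ is also right in the semiabelian case. The survey states this theorem without proof (deferring to the cited source), and your argument is the natural one; the only cosmetic caveat is that the $HG$-algebra is declared of type $(2,1,0)$, so you are implicitly using the term equivalence with $\mathfrak{G}_a$ (or the standard group signature convention) to justify having $-$ available as a generating operation, which is consistent with the integer coefficients in the stated normal form.
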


\begin{theorem}{\bf (W.A.Dudek, K.G{\l}azek, 2006)}\newline
Let $\,\mathfrak{H}=(G,+,\varphi,b)$ be the $HG$-algebra
associated with a semiabelian $n$-ary group $\mathfrak{G}$. Then
all $m$-ary term operations of $\mathfrak{H}$ $($and of
$\;\mathfrak{G}_a\,)$ are of the form
\begin{equation}
F(x_1,\ldots,x_m)=\sum\limits_{i=1}^{m}g_i(x_i)+k_F b\label{sum}
\end{equation}
for some $g_i$ of the form $(\ref{g_i})$ and $k_F\in\mathbb{Z}$.
\end{theorem}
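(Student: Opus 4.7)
The plan is to proceed by structural induction on the complexity of terms built in the signature $(+,-,\varphi,b)$ of the $HG$-algebra $\mathfrak{H}$. Because $\mathfrak{H}$ and $\mathfrak{G}_a$ are term equivalent (as recorded just above the preceding theorem), it suffices to verify the statement for $\mathfrak{H}$.

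For the base case, each projection $\pi_i(x_1,\ldots,x_m)=x_i$ is of the required form: take $g_i(x)=x$ (that is, \eqref{g_i} with $t=1$, $k_{i1}=1$, $l_{i1}=0$), $g_j\equiv 0$ for $j\neq i$, and $k_F=0$. The constant $b$ corresponds to all $g_i\equiv 0$ and $k_F=1$.

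The inductive step is where the semiabelian hypothesis is used: $(G,+)$ is commutative, $\varphi$ is a $+$-automorphism, and $\varphi(b)=b$. If
\[
F_\ell(x_1,\ldots,x_m)=\sum_{i=1}^{m}g_i^{(\ell)}(x_i)+k_\ell b \qquad(\ell=1,2)
\]
are two $m$-ary terms of the stated shape, then, by commutativity of $+$, we may regroup summands by variable to obtain
\[
F_1+F_2=\sum_{i=1}^{m}\bigl(g_i^{(1)}(x_i)+g_i^{(2)}(x_i)\bigr)+(k_1+k_2)b,
\]
and the concatenated sum $g_i^{(1)}+g_i^{(2)}$ is again of the form \eqref{g_i}. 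Likewise
\[
-F_1=\sum_{i=1}^{m}\bigl(-g_i^{(1)}(x_i)\bigr)+(-k_1)b,
\]
with $-g_i^{(1)}$ of the form \eqref{g_i} after negating the coefficients $k_{ij}$. Finally, additivity of $\varphi$ together with $\varphi(b)=b$ gives
\[
\varphi(F_1)=\sum_{i=1}^{m}\varphi\!\bigl(g_i^{(1)}(x_i)\bigr)+k_1 b,
\]
and $\varphi\circ g_i^{(1)}$ is still of the form \eqref{g_i} because $\varphi\bigl(k\varphi^l(x)\bigr)=k\varphi^{l+1}(x)$ for all $k\in\mathbb{Z}$ and $l\geqslant 0$. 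Since every term of $\mathfrak{H}$ is obtained from the variables $x_1,\ldots,x_m$ and the constant $b$ by finitely many applications of $+$, $-$ and $\varphi$, the induction closes.

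The main obstacle — and the only place the hypothesis is essential — is the regrouping of repeated occurrences of the same variable into a single summand $g_i(x_i)$. This is legitimate precisely because $+$ is commutative, i.e., because $\mathfrak{G}$ is semiabelian; without commutativity of the retract one cannot collapse a word in the $x_i$'s to the canonical shape \eqref{sum}, and the statement fails. Once this regrouping is available, closure of the class of $g_i$'s under addition, negation, and composition with $\varphi$ (which are routine, as $\varphi(b)=b$ keeps the $b$-coefficient intact) is straightforward, and the preceding theorem on unary terms is recovered as the case $m=1$.
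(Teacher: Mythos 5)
Your induction on term complexity is correct: the base cases (projections and the constant $b$) have the required shape, and the class of operations of the form \eqref{sum} is closed under $+$ (regrouping by commutativity of the retract), under negation, and under $\varphi$ (since $\varphi$ is additive, fixes $b$, and sends $k\varphi^{l}(x)$ to $k\varphi^{l+1}(x)$), which exhausts the signature. The survey states this theorem without proof (it is quoted from the cited Dudek--G{\l}azek paper), so there is nothing to compare against here, but your argument is the natural one; your one substantive choice --- taking the signature of $\mathfrak{H}$ to include the group inverse, even though the paper labels the $HG$-algebra as type $(2,1,0)$ --- is the right reading, since the recorded term equivalence with $\mathfrak{G}_a$ (whose clone contains $-y$) and the integer coefficients $k_{ij},k_F\in\mathbb{Z}$ in the conclusion both presuppose it.
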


The Hossz\'u-Gluskin Theorem was used by K. G{\l}azek and his
co-workers to calculation of $n$-ary groups of some special types.
For example, in \cite{GG'85} K. G{\l}azek and B. Gleichgewicht
calculated all ternary semigroups and groups for which their
ternary operation $f$ is a ternary polynomial over an infinite
commutative integral domain with identity. In the case when $f$ is
a ternary polynomial over an infinite commutative field $K$ the
operation $f$ has the form $f(x,y,z)=x+y+z+d$ or $f(x,y,z)=axyz$,
where $a\ne 0$ and $d$ are fixed. In the first case $(K,f)$ is a
ternary group, in the second $(K-\{0\},f)$.

\begin{theorem} {\bf (K.G{\l}azek, J.Michalski, 1984)}\newline
Let $m$ be odd and let $(G,f)$ be an $n$-ary group. Then the
operation $f$ has the form
\[
f(x_{1}^{n})=x_{1}\cdot x_{2}^{-1}\cdot x_{3}\cdot\ldots\cdot
x_{n-1}^{-1}\cdot x_{n},
\]
where $(G,\cdot)$ is an abelian group, if and only if $f$ is
idempotent and
\[
f\left(x_1^{i},y,y,x_{i+3}^{n}\right)=
f\left(x_1^{i},z,z,x_{i+3}^{n}\right)
\]
for all $\;0\leqslant i\leqslant n-2.$ In this case
$(G,\cdot)=ret_a(G,f)$ for some $a\in G$.
\end{theorem}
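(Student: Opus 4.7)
The forward direction is a direct verification in the abelian group $(G,\cdot)$. With $n$ odd, the alternating product $f(x,\ldots,x)$ has $(n+1)/2$ factors of $x$ and $(n-1)/2$ factors of $x^{-1}$, so it equals $x$, giving idempotency. Moreover, whenever $y$ occupies two adjacent slots $i+1,i+2$, these slots carry opposite signs in the alternating product, so the $y$--contribution is $y\cdot y^{-1}=e$ (or $y^{-1}\cdot y=e$), and $f(x_1^i,y,y,x_{i+3}^n)$ is independent of $y$.

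For the converse, I would invoke the Hossz\'u--Gluskin Theorem (Theorem~\ref{thGH}) to obtain a binary group $(G,\cdot)$, an automorphism $\varphi\in\mathrm{Aut}(G,\cdot)$, and an element $b\in G$ with $\varphi(b)=b$ and $\varphi^{n-1}(x)=bxb^{-1}$, such that
\[
 f(x_1^n)=x_1\cdot\varphi(x_2)\cdot\varphi^2(x_3)\cdots\varphi^{n-1}(x_n)\cdot b.
\]
Specializing the repeated-argument hypothesis to $i=0$, the only $y$--dependent factor in $f(y,y,x_3^n)$ is the block $y\cdot\varphi(y)$, so the identity $f(y,y,x_3^n)=f(z,z,x_3^n)$ forces $y\cdot\varphi(y)=z\cdot\varphi(z)$ for all $y,z\in G$. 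Putting $z$ equal to the identity $e$ of $(G,\cdot)$ yields $\varphi(y)=y^{-1}$.

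Since $y\mapsto y^{-1}$ is a group homomorphism precisely when the group is abelian, $(G,\cdot)$ is abelian; with $n$ odd we then have $\varphi^{n-1}=\mathrm{id}$, while $\varphi(b)=b$ collapses to $b^{-1}=b$. Substituting $\varphi(y)=y^{-1}$ into the Hossz\'u--Gluskin formula, idempotency $f(x,\ldots,x)=x$ reduces to $x\cdot b=x$, forcing $b=e$ and yielding
\[
 f(x_1^n)=x_1\cdot x_2^{-1}\cdot x_3\cdots x_{n-1}^{-1}\cdot x_n.
\]
To identify the retract, take $a=e$: a direct computation gives $x\circ y=f(x,\stackrel{(n-2)}{e},y)=x\cdot y$, so $(G,\cdot)=ret_e(G,f)$. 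The main and essentially only nontrivial step is extracting the relation $\varphi(y)=y^{-1}$ from the $i=0$ case of the repeated-argument identity; once this is in hand, abelianness, the vanishing of $b$ and the retract identification all fall out mechanically from the Hossz\'u--Gluskin form and idempotency.
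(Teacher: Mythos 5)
Your proof is correct and follows exactly the route the paper points to: the survey states this theorem without proof but places it among the applications of the Hossz\'u--Gluskin Theorem, and your extraction of $\varphi(y)=y^{-1}$ from the $i=0$ case of the repeated-argument identity, followed by abelianness of $(G,\cdot)$ (inversion is an automorphism), $b=e$ from idempotency, and the identification of the retract at $a=e$, is the standard argument. (Note that the statement's ``$m$ odd'' is a typo for ``$n$ odd'', which you correctly use throughout.)
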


\begin{theorem} {\bf (K.G{\l}azek, J.Michalski, 1984)}\newline
Let $(G,\cdot)$ be a group and let \ $t_1,\ldots,t_n$ be fixed
integers. Then $G$ with the operation
\begin{equation*}
f(x_1^{m})=(x_1)^{t_{1}}\cdot(x_{2})^{t_{2}}\cdot\ldots \cdot
(x_{n-1})^{t_{n-1}}\cdot(x_{n})^{t_{n}},
\end{equation*}
is an $n$-ary group if and only if
\begin{enumerate}
\item[$(1)$] \ $x^{t_1}=x=x^{t_n}$,
\item[$(2)$] \ $t_j=k^{j}$ \ for some integer $k$ and all $j=2,\ldots,n-1$,
\item[$(3)$] \ $(x\cdot y)^k=x^k\cdot y^k.$
\end{enumerate}
\end{theorem}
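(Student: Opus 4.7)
$\textbf{Plan.}$ The statement is another Hossz\'u--Gluskin-type characterisation, and the natural organising principle is to reduce it to Theorem~\ref{thGH} by identifying an associated $HG$-algebra. My guess for the associated data is the group endomorphism $\varphi(x)=x^k$ together with the trivial shift element $b=e$: condition~(3) says precisely that $\varphi$ is an endomorphism of $(G,\cdot)$, while conditions~(1) and~(2) describe how the position-$j$ exponent of $f$ relates to powers of $\varphi$.

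For the sufficiency direction $(\Leftarrow)$, I will assume (1)--(3) and verify that the two defining features of an $n$-ary group hold. Iterating the endomorphism law (3) gives $(x_1\cdots x_m)^{k^s}=x_1^{k^s}\cdots x_m^{k^s}$ for every $s,m\geqslant 0$, so each map $x\mapsto x^{k^s}$ is again an endomorphism. The plan is to expand $f(f(x_1^n),x_{n+1}^{2n-1})$ and $f(x_1,f(x_2^{n+1}),x_{n+2}^{2n-1})$ explicitly: condition~(1) absorbs the trivial outer exponents $t_1,t_n$, condition~(2) aligns the middle exponents $k^j$, and condition~(3) is what allows the outer exponent $t_2=k^2$ to be pushed inside the inner $f$. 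Unique solvability at each place $i$ then reduces to invertibility of the individual power maps $x\mapsto x^{t_i}$, a fact to be extracted from (1)--(3) by a short argument.

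For the necessity direction $(\Rightarrow)$, assume $(G,f)$ is an $n$-ary group. I would first derive (1): the equation $f(z,e,\ldots,e)=a$ collapses to $z^{t_1}=a$, and unique solvability forces $x\mapsto x^{t_1}$ to be a bijection of $G$, which together with associativity applied to two such solutions pins this map down to the identity, giving $x^{t_1}=x$; symmetrically for $t_n$. To obtain (3), I would specialise the $(1,2)$-associative law~(\ref{assolaw}) by collapsing every variable except $x_2$ and $x_{n+1}$ to $e$; both sides reduce to expressions in $x_2,x_{n+1}$ alone and yield $(x_2 x_{n+1})^{t_2}=x_2^{t_2}x_{n+1}^{t_2}$, so that letting $k$ be the integer encoded in $t_2=k^2$ produces (3). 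Finally, to obtain (2), I would apply the same collapse trick to the $(i,i+1)$-associativity identities for successive $i=2,\dots,n-2$; the resulting recursive identities determine each $t_{j+1}$ in terms of $t_j$ and~$k$, and solve to $t_j=k^j$.

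The main obstacle will be the necessity direction, specifically showing that the intermediate exponents $t_j$ for $2\leqslant j\leqslant n-1$ are all forced to be powers of a \emph{single} integer $k$. Extracting the endomorphism identity from one instance of associativity is straightforward; what is harder is chaining together multiple instances of $(i,i+1)$-associativity to propagate the identification $t_j=k^j$ along the entire index range, and to ensure that the integer $k$ appearing in the endomorphism identity~(3) is precisely the same as the base of the powers appearing in~(2). Making the bookkeeping of exponents compatible with condition~(1) on the endpoints is the secondary technical subtlety.
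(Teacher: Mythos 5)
The paper states this theorem without proof --- it is quoted as a survey item from G{\l}azek--Michalski \cite{GM'84} --- so there is no argument of the paper's to compare yours against; I can only judge your plan on its own terms. Your organising idea is the right one: conditions (1)--(3) are meant to say that $(G,\cdot,\varphi,e)$ with $\varphi(x)=x^{k}$ is the Hossz\'u--Gluskin algebra of $(G,f)$, sufficiency is a direct expansion of the associative law, and for necessity the ``collapse all but one or two variables to $e$'' trick does yield the expected identities: $x^{t_1^2}=x^{t_1}$ together with surjectivity of $x\mapsto x^{t_1}$ gives $x^{t_1}=x$; the two-variable collapse gives $(xy)^{t_2}=x^{t_2}y^{t_2}$; and chaining the $(i,i+1)$-laws gives $x^{t_{j+1}}=x^{t_j t_2}$.

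Two steps of the plan, however, fail against the conditions as you (and the printed statement) read them, and they need to be confronted rather than glossed. First, your derivation of (3) ends with ``letting $k$ be the integer encoded in $t_2=k^2$''; but the collapse shows only that $x\mapsto x^{t_2}$ is an endomorphism, and no identity valid in $G$ can produce an integer square root of $t_2$. Indeed $f(x,y,z)=x+2y+z$ on $\mathbb{Z}_3$ is a ternary group of the stated form (it is the paper's $g_d$ with $d=2$), yet $t_2=2$ is not congruent to $k^2$ for any integer $k$ modulo $3$; the exponents must be read as $t_j=k^{j-1}$, so that $t_2=k$ and (3) is immediate. Second, in the sufficiency direction, matching the contribution of $x_n$ on the two sides of the $(1,2)$-associative law forces $x^{t_n}=x^{k^{n-1}}$, i.e.\ the Hossz\'u condition $\varphi^{n-1}=\mathrm{id}$; this does \emph{not} follow from (1)--(3) with $j$ ranging only over $2,\dots,n-1$ (on $\mathbb{Z}$ the operation $f(x,y,z)=x+2y+z$ satisfies $x^{t_1}=x=x^{t_3}$, $t_2=k$ and $(xy)^k=x^ky^k$ with $k=2$, but is not associative), so your remark that condition (1) ``absorbs'' $t_n$ is not enough: $t_n$ must itself be tied to $k^{n-1}$. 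Both defects trace to an indexing slip in the printed statement, and a correct proof must repair the statement before executing your plan. A minor economy for sufficiency: once associativity is verified you need not check unique solvability at every place, since solvability at places $1$ and $n$ (trivial from $x^{t_1}=x=x^{t_n}$) already makes an associative $(G,f)$ an $n$-ary group, as recalled earlier in the paper.
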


In this case we say that $(G,f)$ is derived from the
$k$-exponential group. Basing on the results obtained in
\cite{DM'84} one can prove

\begin{proposition} {\bf (W.A.Dudek, K.G{\l}azek, 2006)}\newline
An $n$-ary group $(G,f)$ is derived from the $k$-exponential
$(k>0)$ group $(G,\cdot)$ if and only if in $(G,f)$ there exists
an idempotent $a$ such that
\[
f_{(k)}(\stackrel{(n-2)}{a},x,\stackrel{(n-2)}{a},x,\ldots,\stackrel{(n-2)}{a},x,a)=x
\]
for all $x\in G$. In this case $(G,\cdot)=ret_a(G,f)$.
\end{proposition}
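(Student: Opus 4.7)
The plan is to reduce both implications to a routine calculation with the Hossz\'u-Gluskin representation (Theorem~\ref{thGH}) via the retract $ret_a(G,f)$. The central tool, used identically in each direction, is the closed form
\[
f_{(m)}(x_1,\ldots,x_{m(n-1)+1}) \;=\; \prod_{i=1}^{m(n-1)+1}\varphi^{i-1}(x_i) \cdot b^{m},
\]
which follows by induction on $m$ once one uses the commutation rule $b\cdot y = \varphi^{n-1}(y)\cdot b$ (a restatement of $\varphi^{n-1}(y)=byb^{-1}$) to push each $b$ to the right through the subsequent factors.

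For the forward implication, assume that $(G,f)$ is derived from the $k$-exponential group $(G,\cdot)$. Then $f$ admits a Hossz\'u-Gluskin representation with $\varphi(x)=x^k$ and $b=e$, the identity of $(G,\cdot)$; the constraint $\varphi^{n-1}(x)=bxb^{-1}$ reduces to $x^{k^{n-1}}=x$. Take $a:=e$: it is idempotent for $f$, and $x\circ y=f(x,e,\ldots,e,y)=x\cdot y$, so $(G,\cdot)=ret_a(G,f)$. In the closed form above, all inputs at positions other than $j(n-1)$, $j=1,\ldots,k$, drop out, leaving $\prod_{j=1}^{k}\varphi^{j(n-1)-1}(x)$. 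Because $\varphi^{n-1}=\mathrm{id}$, each factor collapses to $\varphi^{-1}(x)=\varphi^{n-2}(x)=x^{k^{n-2}}$, and the $k$-exponential law gives $(x^{k^{n-2}})^{k}=x^{k^{n-1}}=x$.

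For the reverse implication, assume $a\in G$ is idempotent and that the displayed identity holds for every $x$. Put $(G,\cdot):=ret_a(G,f)$; by uniqueness of the skew element, $\overline{a}=a$, so $a$ is the identity of $(G,\cdot)$. By the Dudek-Michalski description of the HG-algebra attached to the retract at an idempotent \cite{DM'82,DM'84}, one may take $b=f(\overset{(n)}{a})=a$, hence $b=e$ in $(G,\cdot)$ and $\varphi^{n-1}=\mathrm{id}$. Applying the closed form of $f_{(k)}$ to the prescribed arguments collapses the hypothesis to $(\varphi^{-1}(x))^{k}=x$ for every $x$. Setting $y:=\varphi^{-1}(x)$ rewrites this as $\varphi(y)=y^{k}$; since $\varphi$ is an automorphism of $(G,\cdot)$, the $k$-th power map is an automorphism, i.e., $(G,\cdot)$ satisfies $(y\cdot z)^{k}=y^{k}\cdot z^{k}$. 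Reinserting $\varphi(x)=x^{k}$ and $b=e$ into the HG expression exhibits $f$ as derived from the $k$-exponential group $(G,\cdot)=ret_a(G,f)$.

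The main obstacle is the reverse-direction step of normalizing the HG-algebra at an idempotent retract so that $b$ equals the identity of $(G,\cdot)$, equivalently $\varphi^{n-1}=\mathrm{id}$; this is precisely what turns the exponents $\varphi^{j(n-1)-1}$ into the single automorphism $\varphi^{-1}$ for every $j$ and makes the long product collapse to a clean $k$-th power. Once this normalization is in hand, everything else is straightforward bookkeeping with $\varphi$ and the $k$-exponential law.
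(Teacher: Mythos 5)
The paper states this proposition without proof, remarking only that it can be obtained from the results of \cite{DM'84} on retracts; your argument supplies precisely that missing proof along the indicated route, passing through the Hossz\'u--Gluskin representation at the retract $ret_a(G,f)$, and it is correct. In particular the key normalization in the converse direction --- idempotency of $a$ gives $\overline{a}=a$, hence $b=f(\stackrel{(n)}{a})=a$ is the identity of the retract and $\varphi^{n-1}=\mathrm{id}$ --- is exactly right, and the collapse of the long product $f_{(k)}$ to $(\varphi^{-1}(x))^{k}=x$, i.e.\ $\varphi(y)=y^{k}$, checks out.
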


The following theorem proved in \cite{DM'82} plays the fundamental
role in the calculation of non-isomorphic $n$-ary groups.

\begin{theorem}  {\bf (W.A.Dudek, J.Michalski, 1984)}\label{izoth} \newline
Two $n$-ary groups $(G_1,f_1)$, $(G_2,f_2)$ are isomorphic if and
only if for some $a\in G_1$ and $b\in G_2$ there exists an
isomorphism $h:ret_a(G_1,f_1)\to ret_b(G_2,f_2)$ such that
$$h(a)=b,$$
$$
h(f_1({\overline{a}},\ldots,{\overline{a}}))=
f_2(\overline{b},\ldots,\overline{b}),$$
$$
h(f_1(\overline{a},x,\!\overset{(n-2)}{a}))=
f_2(\overline{b},h(x),\!\overset{(n-2)}{b}). $$
\end{theorem}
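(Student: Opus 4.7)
The plan is to use the Hossz\'u--Gluskin Theorem (Theorem~\ref{thGH}) to reduce the claim to a statement about isomorphism of the associated $HG$-algebras, where the three listed conditions translate precisely into ``group isomorphism of retracts + preservation of $\varphi$ + preservation of the distinguished element $\beta$''.

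For necessity, suppose $h\colon(G_1,f_1)\to(G_2,f_2)$ is an $n$-ary isomorphism, fix any $a\in G_1$ and set $b=h(a)$. Since $h$ preserves $f$, it preserves the derived binary operation $x\circ y=f(x,\overset{(n-2)}{a},y)$, so $h$ restricts to a group isomorphism $ret_a(G_1,f_1)\to ret_b(G_2,f_2)$. Because the skew element is characterized by equation (\ref{skew}), every $n$-ary homomorphism preserves it; in particular $h(\overline{a})=\overline{b}$. The remaining two identities then follow by applying $h$ inside $f_1$.

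For sufficiency (the main direction), denote by $\mathfrak{H}_i=(G_i,\cdot_i,\varphi_i,\beta_i)$ the $HG$-algebra associated with $(G_i,f_i)$ via $a_1=a$, $a_2=b$, so that
\[
f_i(x_1,\ldots,x_n)=x_1\cdot_i\varphi_i(x_2)\cdot_i\varphi_i^2(x_3)\cdot_i\ldots\cdot_i\varphi_i^{n-1}(x_n)\cdot_i\beta_i,
\]
the group $(G_i,\cdot_i)$ has identity $\overline{a_i}$, and $\varphi_i(\overline{a_i})=\overline{a_i}$. Evaluating the $HG$-formula at $x_1=\ldots=x_n=\overline{a_i}$ gives $\beta_i=f_i(\overline{a_i},\ldots,\overline{a_i})$, so the second listed condition says exactly $h(\beta_1)=\beta_2$. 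Since $h$ is a group isomorphism, $h(\overline{a})=\overline{b}$. Writing $C_i:=\varphi_i^{2}(a_i)\cdot_i\ldots\cdot_i\varphi_i^{n-1}(a_i)\cdot_i\beta_i$, the $HG$-formula gives
\[
f_i(\overline{a_i},x,a_i,\ldots,a_i)=\varphi_i(x)\cdot_i C_i.
\]
Specializing the third condition at $x=\overline{a}$ (so that $\varphi_1(\overline{a})=\overline{a}$ and similarly on the right) yields $h(C_1)=C_2$; plugging this back into the third condition for arbitrary $x$ and cancelling $C_2$ on the right produces the key intertwining relation $h\circ\varphi_1=\varphi_2\circ h$.

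With $h$ now a group isomorphism $(G_1,\cdot_1)\to(G_2,\cdot_2)$ that sends $\beta_1$ to $\beta_2$ and intertwines $\varphi_1$ with $\varphi_2$ (hence $h\circ\varphi_1^k=\varphi_2^k\circ h$ for every $k\geqslant 0$ by induction), applying $h$ to the $HG$-representation of $f_1(x_1,\ldots,x_n)$ and moving $h$ past each $\varphi_1^k$ shows $h(f_1(x_1^n))=f_2(h(x_1),\ldots,h(x_n))$, as required. The main obstacle is precisely this ``calibration'' step: the third listed condition does \emph{not} literally say $h\circ\varphi_1=\varphi_2\circ h$, because $f_i(\overline{a_i},x,a_i,\ldots,a_i)$ equals $\varphi_i(x)\cdot_i C_i$ rather than $\varphi_i(x)$; one has to first extract $h(C_1)=C_2$ from the $x=\overline{a}$ instance before the intertwining relation emerges, and this is where Conditions (1), (2) and the group-isomorphism property are jointly used.
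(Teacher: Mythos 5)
Your proof is correct and follows the route the paper clearly intends (the survey cites \cite{DM'82} for the proof, but its own description of the associated $HG$-algebra, with $\varphi(x)=f(\overline{a},x,\overset{(n-2)}{a})$ and $\beta=f(\overset{(n)}{\overline{a}})$, is exactly the reduction you carry out). One minor remark: for that standard associated $HG$-algebra your ``calibration'' step is vacuous, since $\varphi_i$ is by definition $x\mapsto f_i(\overline{a_i},x,\overset{(n-2)}{a_i})$, so the third condition literally is $h\circ\varphi_1=\varphi_2\circ h$ and your constant $C_i$ equals the identity $\overline{a_i}$ --- though your more general argument is harmless and covers any $HG$-presentation over the retract.
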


\bigskip

This theorem reduces the problem of the calculation of
non-isomorphic $n$-ary groups to the classification of their
binary retracts. As an illustration of results obtained by K.
G{\l}azek and his co-workers we present the complete list of
$n$-ary groups $\langle\varphi,b\rangle$-derived from cyclic
groups.

Let $(\mathbb{Z}_{k},+)$ by the cyclic group modulo $k$. Consider
the following three $n$-ary operations:
\begin{equation*}
\arraycolsep=.5mm
\begin{array}{rl}
f_{a}(x_{1}^{n}) & \equiv (x_{1}+\ldots +x_{n}+a)\,(\mathrm{mod}\,k), \\[4pt]
g_{d}(x_{1}^{n}) & \equiv (x_{1}+dx_{2}+\ldots
+d^{n-2}x_{n-1}+x_{n})\,(\mathrm{mod}\,k), \\[4pt]
g_{d,c}(x_{1}^{n}) & \equiv (x_{1}+dx_{2}+\ldots
+d^{n-2}x_{n-1}+x_{n}+c)\,( \mathrm{mod}\,k),
\end{array}
\end{equation*}
where $a\in \mathbb{Z}_{k}$, \ $c,d\in \mathbb{Z}_{k}\setminus
\{0,1\},$ \ $d^{n-1}\equiv 1\,(\mathrm{mod}\,k).$ Additionally,
for the operation $g_{d,c}$ we assume that $dc\equiv
c\,(\mathrm{mod}\,k)$ holds. By Theorem~\ref{thGH},
$(\mathbb{Z}_{k},f_{a})$, $(\mathbb{Z}_{k},g_{d})$ and
$(\mathbb{Z}_{k},g_{d,c})$ are $n$-ary groups.

\begin{theorem} {\bf (K.G{\l}azek, J.Michalski, I.Sierocki,
1984)}\newline
 A $k$-element $n$-ary group $(G,f)$ is
$\langle\varphi,b\rangle$-derived from the cyclic group of order
$k$ if and only if it is isomorphic to exactly one $n$-ary group
of the form $(\mathbb{Z}_{k},f_{a})$, $(\mathbb{Z}_{k},g_{d})$ or
$(\mathbb{Z}_{k},g_{d,c})$, where $d|gcd(k,n-1)$ and $c|k.$
\end{theorem}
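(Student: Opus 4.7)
The plan is to apply the Hossz\'u-Gluskin Theorem~\ref{thGH} to write down the general form of an $n$-ary group $\langle\varphi,b\rangle$-derived from $(\mathbb{Z}_k,+)$, identify the three listed families $f_a$, $g_d$, $g_{d,c}$ inside that form, and then invoke Theorem~\ref{izoth} to cut the resulting parameters down to one canonical representative per isomorphism class.

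First I would observe that, since $(\mathbb{Z}_k,+)$ is abelian, condition $(iii)$ of Theorem~\ref{thGH} collapses to $\varphi^{n-1}=\mathrm{id}$, and $(iv)$ reads
\[
f(x_1^n)\equiv x_1+\varphi(x_2)+\varphi^2(x_3)+\cdots+\varphi^{n-1}(x_n)+b\pmod k,
\]
with $\varphi\in\mathrm{Aut}(\mathbb{Z}_k,+)$ and $\varphi(b)=b$. Using $\mathrm{Aut}(\mathbb{Z}_k,+)\cong(\mathbb{Z}/k\mathbb{Z})^{\times}$ I write $\varphi(x)=dx$, so the axioms become $d^{n-1}\equiv 1\pmod k$ and $db\equiv b\pmod k$; substituting back and absorbing $d^{n-1}\equiv 1$ into the coefficient of $x_n$ yields
\[
f(x_1^n)\equiv x_1+dx_2+d^2x_3+\cdots+d^{n-2}x_{n-1}+x_n+b\pmod k.
\]
A trichotomy on $(d,b)$ now matches the three families: $d=1$ recovers $(\mathbb{Z}_k,f_a)$ with $a=b$; $d\neq 1$, $b=0$ recovers $(\mathbb{Z}_k,g_d)$; and $d\neq 1$, $b\neq 0$ recovers $(\mathbb{Z}_k,g_{d,c})$ with $c=b$, the fixedness condition $db\equiv b$ translating into the stipulated $dc\equiv c$. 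The converse is immediate: for each family the formula fits the Hossz\'u-Gluskin template with the indicated $(\varphi,b)$, so each of $f_a,g_d,g_{d,c}$ is $\langle\varphi,b\rangle$-derived from the cyclic group.

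The crux of the argument, and the main obstacle, is the ``exactly one'' clause, which I would handle via Theorem~\ref{izoth}. For each of our families the retract $ret_a$ is cyclic of order $k$, so any isomorphism between two such $n$-ary groups must restrict to an automorphism $h(x)=mx$ of $(\mathbb{Z}_k,+)$ with $m\in(\mathbb{Z}/k\mathbb{Z})^{\times}$; the remaining two conditions of Theorem~\ref{izoth} then become explicit congruences relating the parameters $(a;d,c)$ on one side to $(a';d',c')$ on the other, and varying the retract point $a$ induces a natural $(\mathbb{Z}/k\mathbb{Z})^{\times}$-action on admissible triples. Inter-family non-isomorphism is easy: $f_a$ is commutative while $g_d,g_{d,c}$ with $d\neq 1$ are not, since $g_d(x,y,\ldots)-g_d(y,x,\ldots)=(1-d)(x-y)\not\equiv 0\pmod k$ for suitable $x,y$, and $g_d$ is separated from $g_{d,c}$ by whether the additive constant vanishes. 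The remaining work is intra-family: I would verify that the divisibility conditions $d\mid\gcd(k,n-1)$ and $c\mid k$ pick out exactly one orbit representative under the above $(\mathbb{Z}/k\mathbb{Z})^{\times}$-action, by direct calculation with the congruences extracted from Theorem~\ref{izoth}.
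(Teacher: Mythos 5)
The paper does not actually prove this theorem: it is quoted from \cite{GMS}, and the surrounding text only explains the method behind it (classify retracts via the Hossz\'u--Gluskin Theorem and then apply Theorem~\ref{izoth}). Your proposal follows exactly that method, and the first half is sound: writing $\varphi(x)=dx$, reducing the Hossz\'u--Gluskin form to $x_1+dx_2+\cdots+d^{n-2}x_{n-1}+x_n+b \pmod k$, and matching the trichotomy on $(d,b)$ to the three families is correct, as is the commutativity argument separating $f_a$ from $g_d,g_{d,c}$.

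However, the part you yourself identify as the crux --- the ``exactly one'' clause --- is where the entire content of the theorem lies, and there your argument has genuine gaps rather than just omitted computation. First, Theorem~\ref{izoth} quantifies over base points $a\in G_1$, $b\in G_2$, so the admissible identifications of $\mathbb{Z}_k$ with itself are affine maps $x\mapsto mx+t$, not the purely multiplicative maps $x\mapsto mx$ you posit; the translation part $t$ is precisely what moves the Hossz\'u--Gluskin constant around, and it is not a $(\mathbb{Z}/k\mathbb{Z})^{\times}$-action on the parameters. Second, your claim that $g_d$ is separated from $g_{d,c}$ ``by whether the additive constant vanishes'' is not an isomorphism-invariant statement: conjugating $g_{d,c}$ by $x\mapsto x+t$ replaces $c$ by $c+t(1+d+\cdots+d^{n-2})$, and since $(d-1)(1+d+\cdots+d^{n-2})\equiv 0\pmod k$ only guarantees that this sum is a zero divisor, not that it vanishes, the constant can in general be altered or killed by a translation. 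Deciding exactly when it can --- and showing that the normalizations $dc\equiv c$, $c\mid k$, $d\mid\gcd(k,n-1)$ single out one representative per affine orbit --- is the actual theorem, and it is only announced, not proved. (Indeed, this bookkeeping is delicate enough that the paper itself remarks that the companion statement for the infinite cyclic group was first published in \cite{GMS} ``in incorrect form,'' so the orbit computation cannot be waved through as a routine verification.)
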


An infinite cyclic group can be identified with the group
$(\mathbb{Z},+)$. This group has only two automorphisms: $\varphi
(x)=x$ and $\varphi (x)=-x$. So, according to Theorem~\ref{thGH},
$n$-ary groups defined on $\mathbb{Z}$ have the form
$(\mathbb{Z},f_{a})$ or $(\mathbb{Z},g_{-1})$, where
\begin{equation*}
g_{-1}(x_{1}^{n})=x_{1}-x_{2}+x_{3}-x_{4}+\ldots +x_{n},
\end{equation*}
and $n$ is odd. Since $n$-ary groups $(\mathbb{Z},f_{a})$ and
$(\mathbb{Z},f_{b})$ are isomorphic for $a\equiv
b\,(\mathrm{mod}\,(n-1))$, we have $n-1$ non-isomorphic $n$-ary
groups of this form. Isomorphisms have the form $\varphi_{k}(x)=x-k$.\\

So, we have proved

\begin{theorem}  {\bf (W.A.Dudek, K.G{\l}azek, 2006)}\newline
An $n$-ary group $\langle\varphi,b\rangle$-derived from the
infinite cyclic group $(\mathbb{Z},+)$ is isomorphic to $n$-ary
group $(\mathbb{Z},f_a)$, where $0\leqslant a\leqslant (n-2)$, or
with $(\mathbb{Z},g_{-1})$, where $ n$ is odd.
\end{theorem}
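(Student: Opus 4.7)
The plan is to invoke Theorem~\ref{thGH} on $(\mathbb{Z},+)$ and enumerate the data $(\varphi,b)$ that it permits. First I would use the fact that $\mathrm{Aut}(\mathbb{Z},+)$ consists of only two elements, the identity $\varphi(x)=x$ and the inversion $\varphi(x)=-x$, so the Hossz\'u--Gluskin description collapses to two cases.

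If $\varphi=\mathrm{id}$, conditions $(ii)$ and $(iii)$ of Theorem~\ref{thGH} are automatic (the latter because $(\mathbb{Z},+)$ is abelian), and any $b\in\mathbb{Z}$ yields the $n$-ary operation
\[
f(x_1^n)=x_1+x_2+\cdots+x_n+b,
\]
which is $f_b$ in the notation of the paper. If instead $\varphi(x)=-x$, then $\varphi(b)=b$ forces $b=0$, and the axiom $\varphi^{n-1}(x)=b+x-b=x$ becomes $(-1)^{n-1}x=x$ for every $x\in\mathbb{Z}$, which is possible only when $n$ is odd; the resulting operation is $x_1-x_2+x_3-\cdots+x_n$, i.e.\ $g_{-1}$.

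The remaining task is to reduce the family $\{(\mathbb{Z},f_a):a\in\mathbb{Z}\}$ to the $n-1$ representatives with $0\leqslant a\leqslant n-2$. For each $k\in\mathbb{Z}$ I would take the bijection $\varphi_k(x)=x-k$ and compute
\[
\varphi_k(f_a(x_1^n))=x_1+\cdots+x_n+a-k,\qquad f_{a'}(\varphi_k(x_1),\ldots,\varphi_k(x_n))=x_1+\cdots+x_n-nk+a',
\]
so $\varphi_k:(\mathbb{Z},f_a)\to(\mathbb{Z},f_{a'})$ is an isomorphism exactly when $a'=a+(n-1)k$. Hence every $(\mathbb{Z},f_a)$ is isomorphic to some $(\mathbb{Z},f_{a'})$ with $0\leqslant a'\leqslant n-2$, and the theorem follows. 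The only delicate point I anticipate is the simultaneous handling of conditions $(ii)$ and $(iii)$ in Case~2, which is what pins down both $b=0$ and the parity of $n$; after that step the verifications are routine.
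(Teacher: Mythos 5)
Your proposal is correct and follows essentially the same route as the paper: both invoke Theorem~\ref{thGH} with the observation that $\mathrm{Aut}(\mathbb{Z},+)=\{\mathrm{id},\,x\mapsto -x\}$, obtain $f_b$ in the identity case and $g_{-1}$ (with $b=0$ and $n$ odd forced) in the inversion case, and then use the translations $\varphi_k(x)=x-k$ to show $(\mathbb{Z},f_a)\cong(\mathbb{Z},f_{a'})$ iff $a\equiv a'\pmod{n-1}$. Your explicit computation of the isomorphism condition $a'=a+(n-1)k$ is exactly the paper's (unstated) calculation.
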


A very similar result, but in incorrect form, was firstly
formulated in \cite{GMS}.

\medskip
Denote by $Inn\left(G,\cdot\right)$ the group of all inner
automorphisms of $(G,\cdot)$, by $Out\left(G,\cdot\right)$ the
factor group of $Aut\left(G,\cdot\right)$ by
$Inn\left(G,\cdot\right)$, and by $Out_{n}\left(G,\cdot\right)$
the set of all cosets $\overline{\gamma}\in
Out\left(G,\cdot\right)$ containing $\gamma$ such that
$\gamma^{n-1}\in Inn\left(G,\cdot\right)$. Then, the number of
pairwise non-isomorphic $n$-ary groups $\langle\varphi,
b\rangle$-derived from a centerless group $(G,\cdot)$, i.e., a
group for which $|Cent\left(G,\cdot\right)|=1$, is smallest or
equal to $s=\mid Out_{n}\left(G,\cdot\right)\mid $. It is equal to
$s$ if and only if $Out\left(G,\cdot \right)$ is abelian (for
details see \cite{GMS}).

For every $n$ and $k\ne 2,6$, there exists exactly one $n$-ary
group which is $\langle\varphi,b\rangle$-derived from $S_k$ (for
$k=2$ and $k=6$ we have one or two such $n$-ary groups relatively
to evenness of $n$).

Any finite group is uniquely determined by its multiplication
table which in fact is a Latin square. In the case of $n$-ary
groups the role of multiplication tables play $n$-dimensional
cubes. So, the problem of enumeration of all finite $n$-ary groups
can be reduced to the problem of enumeration of the corresponding
cubes. But it is rather difficult problem. The better approach was
suggested by K. G{\l}azek and J. Michalski. They proposed a method
based on the Hossz\'u-Gluskin Theorem and our Theorem~\ref{izoth},
i.e., the calculation of all non-isomorphic $n$-ary groups by the
classification of their retracts and automorphisms of these
retracts. Using this method they obtain in \cite{GM'84, GM'87,
GM'88} a full classification of all non-isomorphic $n$-ary groups
on sets with at most $7$ elements. The complete list of such
$n$-groups (with some comments) one can find in \cite{DG}.

\section{Covering group}

A binary group $(G^{\ast},\circ)$ is said to be a {\it covering
group} for the $n$-ary group $(G,f)$ if there exists an embedding
$\tau :G\rightarrow G^{\ast}$ such that $\tau(G)$ is a generating
set of $G^{\ast }$ and $\tau(f(x_1^n))= \tau
(x_1)\circ\tau(x_2)\circ\ldots\circ\tau(x_n)$ for every
$x_1,\ldots,x_n\in G.$ $(G^{\ast},\circ)$ is a \textit{universal
covering group} (or a \textit{free covering group}) if for any
covering group $(G^{\bullet},\diamond)$ there exists a
homomorphism from $G^*$ onto $G^{\bullet}$ such that the following
diagram is commutative:

\begin{center}
\begin{minipage}{6cm}

\ \ \ \ \ \ \ \ \ \ \ \ \ \ \ \ \ \ \ \ \ $G$

\ \ \ \ \ \ \ \ \ \ \ \ \ \ {  \ }$\swarrow ${  \ }$
\circlearrowleft ${  \ }$\searrow $

\ \ \ \ \ \ \ \ \ \ \ \ \ $G^{\ast}--\rightarrow \ G^\bullet$

\ \ \ \ \ \ \ \ \ \ \ \ \ \ \ \ \ \ \ \ {\footnotesize onto}
\end{minipage}
\end{center}

E. L. Post proved in \cite{post} that for every $n$-ary group
$(G,f)$ there exist a covering group $(G^{\ast},\circ)$ and its
normal subgroup $G_0$ such that $G^{\ast}/G_0$ is a cyclic group
of order $n-1$ and $f(x_1^n)=x_1\circ x_2\circ\ldots\circ x_n$ for
all $x_1,\ldots,x_n\in G$, where $G$ is identified with the
generator of the group $G^{\ast}/G_0$. So, the theory of $n$-ary
groups is closely related to the theory of {\it cyclic extensions
of groups}, but these theories are not equivalent.

Indeed, the above Post's theorem shows that for any $n$-ary group
$(G,f)$ we have the sequence
$$
O\longrightarrow (G_0,\circ)\longrightarrow (G^{\ast},\circ )
\stackrel{\zeta}\longrightarrow(\mathbb{Z}_n,+_n)\longrightarrow
O,
 $$
where $(G^{\ast },\circ)$ is the free covering group of $(G,f)$
and $G=\zeta^{-1}(1)$. We have also
\[\arraycolsep=.5mm
\begin{array}{ccccc}
&  & ( G_{1}^{\ast },\circ) &  &  \\
& \nearrow & \uparrow & \searrow &  \\
(G_0,\circ ) &  & \circlearrowright \;\;\; {\vert}
\circlearrowleft\circlearrowright &  & (\mathbb{Z}_n,+_n) \\
& \searrow &\downarrow & \nearrow &  \\
&  & (G_2^{\ast},\circ)&  &
\end{array}
\]
where we use

$\circlearrowright $ \ for the equivalence of extensions,

$\circlearrowleft $ \ for the isomorphism of suitable $n$-ary
groups.

\medskip

Of course, two $n$-ary groups determined in the above-mentioned
sense by two equivalent cyclic extensions are isomorphic. However,
as observed K. G{\l}azek, two non-equivalent cyclic extensions can
determine two isomorphic $n$-ary groups. Below we give an example
of two non-equivalent cyclic extensions induced by two isomorphic
$4$-ary groups.

\begin{example} {\sl (W.A.Dudek, K.G{\l}azek, 2006)}\newline
Consider two cyclic extensions of the cyclic group $\mathbb{Z}_3$
by $\mathbb{Z}_3$:
$$
0\longrightarrow\mathbb{Z}_3\stackrel{\alpha }{\longrightarrow
}\mathbb{Z}_9\stackrel{\beta_{1}}{\longrightarrow
}\mathbb{Z}_3\longrightarrow 0
$$
and
$$
0\longrightarrow \mathbb{Z}_3\stackrel{\alpha}{\longrightarrow
}\mathbb{Z}_9\stackrel{\beta_{2}}{\longrightarrow
}\mathbb{Z}_3\longrightarrow 0,
$$
where the homomorphisms $\alpha$, $\beta_{1}$ and $\beta_{2}$ are
given by:
\[
\begin{array}{lcl}
\;\alpha (x)=3x&{\rm for}& x\in\mathbb{Z}_3,\\[4pt]
\beta_{1}(x)= x({\rm mod}\,3)&{\rm for}&x\in\mathbb{Z}_9,\\[4pt]
\beta_{2}(x)= 2x({\rm mod}\,3)&{\rm for}&x\in\mathbb{Z}_9.
\end{array}
\]

It is easy to verify that the sets $\beta_{1}^{-1}(1)=\{1,4,7\}$
and $\beta_{1}^{-1}(1)=\{2,5,8\}$ with the operation
$g(x,y,z,v)=(x+y+z+v)({\rm mod}\,9)$ are $4$-ary groups. These
$4$-ary groups are isomorphic. The isomorphism
$\varphi:(\beta_{1}^{-1}(1),g)\longrightarrow(\beta_{2}^{-1}(1),g)$
has the form $\varphi(x)=2x({\rm mod}\,9)$. Nevertheless, the
above-mentioned extensions are not equivalent because there is no
automorphism $\lambda$ of $\mathbb{Z}_9$ such that $\lambda\circ
\alpha =\alpha$ and $\beta_{2}\circ\lambda =\beta_{1}$.
\end{example}

The Post's construction of a covering group for given $n$-ary
group $(G,f)$ is based on the following equivalence relation
defined on the set of all finite sequences of elements of $G$:
\[
x_1^k\sim y_1^m\Longleftrightarrow
f_{(\cdot)}(z_1^s,x_1^k,u_1^t)=f_{(\cdot)}(z_1^s,y_1^m,u_1^t)
\]
for some $z_1^s,u_1^t\in G$. Since $k\equiv m({\rm mod} (n-1))$,
each sequence is equivalent with some sequence of the length
$i=1,2,\ldots,n-1$. Defining on the set $G^{\ast}$ of such
obtained equivalence classes the operation $[x_1^i]\ast
[y_1^j]=[x_1^iy_1^j]$ we obtain a covering group of $(G,f)$ (for
details see \cite{post}).

Basing on this method K. G{\l}azek and B. Gleichgewicht proposed
in \cite{GG'73} (see also \cite{KG}) other more simple
construction of a covering group for ternary group. Namely, for a
ternary group $(G,f)$ they consider the set $G^2\cup G$ with the
operation
\[\arraycolsep=.5mm
\begin{array}{rl}
x\circ y&=\langle x,y\rangle\\
x\,\circ \langle y,z\rangle &=f(x,y,z)\\
\langle x,y\rangle\circ\, z&=f(x,y,z)\\
\langle x,y\rangle\circ\langle z,u\rangle &=\langle
f(x,y,z),u\rangle
\end{array}
\]
and the relation $\sim$ defined as follows:
\[\arraycolsep=.5mm
\begin{array}{rl}
x\sim y &\Longleftrightarrow x=y\\
\langle x,y\rangle\sim\langle x',y'\rangle &\Longleftrightarrow
(\exists a,b\in G)\;(f(x',a,b)=x\,\wedge\,f(a,b,y)=y).
\end{array}
 \]
Then $\sim$ is a congruence on the algebra $(G^2\cup G,\circ)$ and
$(G^2\cup G/\sim,\tilde{\circ})$, where $\tilde{\circ}$ is the
operation induced by $\circ$, is a universal covering group for
$(G,f)$. Obviously this construction can be extended to an
arbitrary $n$-ary group, but, as it was observed by W. A. Dudek at
the G{\l}azek's seminar in 1977, for $n>3$ this construction
coincides with the Post's construction. Nevertheless this
construction gives the impulse to the nice construction presented
by J. Michalski (cf. \cite{JM'81}).

\begin{theorem} {\bf (J.Michalski, 1981)}\newline
Let $c$ ba an arbitrary fixed element of an $n$-ary group $(G,f)$.
Then the set $G\times\mathbb{Z}_{n-1}$ with the operation
\[
\langle x,s\rangle\ast\langle y,t\rangle= \langle
f(x,\stackrel{(s)}{c},y,\stackrel{(t)}{c},\overline{c},\stackrel{(n-1-s\diamond
t)}{c}),s\diamond t\rangle ,
 \]
where $s\diamond t=(s+t+1)({\rm mod} (n-1))$, is a universal
covering group for $(G,f)$.
\end{theorem}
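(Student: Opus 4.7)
The plan is to verify in succession: (A) $(G \times \mathbb{Z}_{n-1}, \ast)$ is a group; (B) the map $\tau:x\mapsto\langle x,0\rangle$ is an embedding satisfying $\tau(f(x_1^n)) = \tau(x_1) \ast \cdots \ast \tau(x_n)$; (C) $\tau(G)$ generates $G\times\mathbb{Z}_{n-1}$; (D) this covering group is universal.

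For (A), the second coordinate is a group since $(s \diamond t)\diamond u \equiv s + t + u + 2 \equiv s \diamond (t \diamond u) \pmod{n-1}$, so $(\mathbb{Z}_{n-1}, \diamond)$ is cyclic with identity $n-2$. For the first coordinate the natural candidate for identity is $e=\langle\overline{c},n-2\rangle$: indeed $(n-2)\diamond t=t$ in the second slot, while the first-coordinate expression for $e \ast \langle y,t\rangle$ collapses to $y$ by repeated use of D\"ornte's identities~(\ref{dor-r})--(\ref{dor-l}) to cancel the pair $\overline{c},c$. Associativity is established by expanding both $(\langle x,s\rangle \ast \langle y,t\rangle) \ast \langle z,u\rangle$ and $\langle x,s\rangle \ast (\langle y,t\rangle \ast \langle z,u\rangle)$ as long products of $f$; associativity of $f$ together with further D\"ornte cancellations of the $c,\overline{c}$ pairs introduced on each side brings both expressions to a common normal form. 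The inverse of $\langle x,s\rangle$ is $\langle x',s'\rangle$ with $s'\equiv n-3-s\pmod{n-1}$ (so that $s\diamond s'=n-2$) and $x'$ the unique solution, guaranteed by the $n$-ary group property, of the resulting first-coordinate equation. The main obstacle is the combinatorial bookkeeping here: one must track precisely how many $c$'s and $\overline{c}$'s occur in each long-product expansion and orchestrate the D\"ornte cancellations so that both sides reduce identically.

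For (B), $\tau$ is manifestly injective. A direct induction on the number of factors shows that $\tau(x_1)\ast\cdots\ast\tau(x_n)$ has second coordinate $(n-1)\bmod(n-1)=0$, while its first coordinate telescopes, via associativity of $f$ and D\"ornte cancellation of the intermediate $c,\overline{c}$ pairs, to exactly $f(x_1^n)$. For (C), any product $\tau(y_1)\ast\cdots\ast\tau(y_{t+1})$ of $t+1$ generators lies in the fibre over $t\in\mathbb{Z}_{n-1}$; solvability of the first-coordinate equation in $(G,f)$ allows the $y_i$'s to be chosen so as to reach any prescribed element of that fibre, so $\tau(G)$ generates.

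For (D), let $(G^\bullet,\diamond)$ be any covering group with embedding $\tau^\bullet$. By (C), any homomorphism $\Phi:G\times\mathbb{Z}_{n-1}\to G^\bullet$ with $\Phi\circ\tau=\tau^\bullet$ is determined uniquely on generators by $\Phi(\tau(x))=\tau^\bullet(x)$, and its image automatically contains $\tau^\bullet(G)$, giving surjectivity. Existence is handled by comparison with Post's universal covering group $G^P$ from~\cite{post}: by Post's theorem $G^P$ surjects onto every covering group, so (A)--(C) yield surjections $G^P\twoheadrightarrow G\times\mathbb{Z}_{n-1}$ and $G^P\twoheadrightarrow G^\bullet$, both respecting the induced projections onto $\mathbb{Z}_{n-1}$ and injective on the $G$-fibres; a five-lemma argument on the associated cyclic extensions then shows that the first surjection is an isomorphism, and composing its inverse with the second produces the required $\Phi$.
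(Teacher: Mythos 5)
The paper states this theorem without proof, merely citing Michalski's original article \cite{JM'81}, so there is no in-paper argument to measure yours against; judged on its own terms, your plan is sound and all four stages go through. The decomposition (group axioms, embedding compatible with $f$, generation, universality) is the natural one, and you correctly identify the single computational engine that drives everything: inside a long product any block $\stackrel{(j-2)}{c},\overline{c},\stackrel{(n-j)}{c}$ of total length $n-1$ is neutral by D\"ornte's identities, and each association of a triple product reduces, after exactly one such cancellation, to $f_{(\cdot)}(x,\stackrel{(s)}{c},y,\stackrel{(t)}{c},z,\stackrel{(u)}{c},\overline{c},\stackrel{(n-2-s\diamond(t\diamond u))}{c})$; your identity $\langle\overline{c},n-2\rangle$ and the second coordinate $n-3-s$ of the inverse are likewise correct. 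Two remarks. First, with the exponent $n-1-s\diamond t$ as printed, the argument string of $f$ has length congruent to $2$ modulo $n-1$ and so is not a well-formed long product; the construction works with $n-2-s\diamond t$. This is a slip in the survey's transcription of Michalski's formula, but an honest execution of the bookkeeping you defer would have collided with it, so it should be flagged rather than passed over silently. Second, your universality step is more roundabout than necessary: the detour through Post's free covering group does work (injectivity of $G^{P}\to G\times\mathbb{Z}_{n-1}$ on the single coset $\tau^{P}(G)$ already forces the kernel, which lies in $G^{P}_0$, to be trivial, so no genuine five-lemma diagram chase is needed), but one can instead define $\Phi(\langle x,t\rangle)$ directly as the product $\mu(y_1)\cdots\mu(y_{t+1})$ in $G^{\bullet}$ for any expression $\langle x,t\rangle=\tau(y_1)\ast\cdots\ast\tau(y_{t+1})$, and verify well-definedness by appending $\mu(a_{t+2})\cdots\mu(a_n)$ and cancelling in the group $G^{\bullet}$; this keeps the argument self-contained and avoids any appeal to the internal structure of Post's equivalence classes.
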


It is clear that $G\times\{n-2\}$ is a normal subgroup of
$G\times\mathbb{Z}_{n-1}$. The set of all classes (in the Post's
construction) induced by the sequences of the length $i+1$ can be
identified with $G\times\{i\}$. So, the set $G$ can be identified
with $G\times\{0\}$, i.e., $\tau(G)$ is a coset of
$G\times\mathbb{Z}_{n-1}$ modulo $G\times\{n-1\}$. Since for
$\tau(x)=\langle x,0\rangle$ we have
$\tau(f(x_1^n))=\tau(x_1)\ast\tau(x_2)\ast\ldots\ast\tau(x_n)$, an
$n$-ary group $(G,f)$ can be identified with an $n$-ary group
derived from $G\times\{0\}$.

In \cite{DM'82} (see also \cite{DM'84}) it is proved that for
every $c\in G$ the groups $ret_c(G,f)$ and $G\times\{n-1\}$ are
isomorphic.

\section{Representations}

Let $(G,g,\widetilde{\,}\;)$ be a general algebra with one $n$-ary
operation $g$ and one unary operation \ $\widetilde{}:G\to G$. If
$\psi,\varphi_1,\ldots,\varphi_n:G\to H$ are homomorphism of
$(G,g,\widetilde{}\;)$ into a semiabelian $n$-ary group
$(H;f,\bar{}\;)$, then, as it was observed by K. G{\l}azek and B.
Gleichgewicht in \cite{GG'77}, the mapping
\[
\widehat{f}(\varphi_1,\ldots,\varphi_n):G\to H
\]
defined by
 \[
\widehat{f}(\varphi_1,\ldots,\varphi_n)(x)=f(\varphi_1(x),\ldots,\varphi_n(x))
\]
and the mapping $\widehat{\psi}:G\to H$ defined by
 \[
 \widehat{\psi}(x)=\overline{\psi(x)}
 \]
are also homomorphisms of $(G,g,\widetilde{\,}\;)$ into
$(H;f,\bar{\,}\;)$. Moreover, the algebra
$(F;\widehat{f},\widehat{\,}\;)$ of all homomorphisms of
$(G,g,\widetilde{\,}\;)$ into $(H;f,\bar{\,}\;)$ belongs to the
same variety as the algebra $(H;f,\bar{\,}\;)$, so it is
semiabelian too. The set of all endomorphisms of a semiabelian
$n$-ary group forms an $(n,2)$-nearring $(E;\widehat{f},\circ )$
with unity, where $\circ$ is the superposition of endomorphisms.

Recall that an abstract algebra $(E;f,\cdot)$ is an
$(n,2)$-nearring if $(E;f)$ is an $n$-ary group, $(E;\cdot)$ is a
binary semigroup and the following two identities
\[
y\cdot f(x_1^n)=f(y\cdot x_1,y\cdot x_2,\ldots ,y\cdot x_n),
\]
\[
f(x_1^n)\cdot y=f(x_1\cdot y,x_2\cdot y,\ldots ,x_n\cdot y)
\]
hold. In the case when $(E;f)$ is a commutative $n$-ary group an
$(n,2)$-nearring is called an {\it $(n,2)$-ring}. Every
$(n,2)$-ring $(E;f,\cdot)$ with a cancellable element (with
respect to the multiplication $\cdot$ ) is isomorphic to an
$(n,2)$-ring of some endomorphisms of the $n$-ary group $(E;f)$
(cf. \cite{GG'77}).

Further study of homomorphisms of $n$-ary groups was continued by
A. M. Gal'mak (cf. for example \cite{gal'86, gal'01, gal'01a}).
Most of his results are based on the G{\l}azek's observation (cf.
\cite{KG'83} or \cite{KG'94}) that every weak homomorphism between
commutative $n$-ary groups is an ordinary homomorphism and the
following Post's construction of polyadic substitutions.

As it is well known an ordinary substitution, finite or infinite,
is a one-to-one map from the set $A$ onto $A$. Let
$A_1,A_2,\dots,A_{n-1}$ be a finite sequence of sets of the same
cardinality. The sequence
$\sigma=(\sigma_1,\sigma_2,\dots,\sigma_{n-1})$ of maps
$$
\sigma_1:A_1\to A_2, \ \ \sigma_2:A_2\to A_3,\;\dots,\;\sigma_{n-1}:A_{n-1}\to A_1
$$
is called a {\it $n$-ary substitution}. The superposition of two
$n$-ary substitutions is the sequence
$\tau=(\tau_1,\tau_2,\dots,\tau_{n-1})$ of maps
$$
\tau_1:A_1\to A_3, \ \ \tau_2:A_2\to A_4, \;\dots,\;\tau_{n-2}:A_{n-2}\to A_1, \ \
\tau_{n-1}:A_{n-1}\to A_2 .
$$
The set of $n$-ary substitutions is closed with respect to the
superposition of $n$ such substitutions. In fact it is an $n$-ary
group with respect to this operation. Moreover, each $n$-ary group
is isomorphic to the $n$-ary group of substitutions of some set
$A$ (cf. \cite{post}). Each $n$-ary group is isomorphic to the
$n$-ary group of some translations too \cite{gal'86}.

Let now $(A,f)$ be an $n$-ary group. The Cartesian product
$A^{n-1}$ endowed with the $n$-ary operation $g$ defined as the
skew product in the matrix $[a_{ij}]_{n\times (n-1)}$, i.e.,
 \[\begin{array}{lll}
g((a_{11},a_{12},\dots,a_{1\,n-1}),(a_{21},a_{22},\dots,a_{2\,n-1}),\dots,
(a_{n1},a_{n2},\dots,a_{n\,n-1}))\\[4pt]
=(f(a_{11},a_{22},a_{33},\dots,a_{n-1\,n-1},a_{n1}),
f(a_{12},a_{23},a_{34},\dots,a_{n-2\,n-1},a_{n-1\,1},a_{n2}),\\[4pt]
\hfill\ldots,f(a_{1\,n-1},a_{21},a_{32},\dots,a_{n-1\,n-2}a_{n\,n-1})),\end{array}
\]
is an $n$-ary group (cf. \cite{post}) which is called {\it
diagonal}. The diagonal $n$-ary group of invertible linear
transformations of a complex vector space is used in \cite{GWW} to
the description of one-dimensional representations of cyclic
$n$-groups. The invariant subspaces of a representation $\rho$,
sequences of $\rho$-invariant subspaces, the covering
representation ${\widehat\rho}$ and the relations between $\rho$
and ${\widehat\rho}$ are discussed in \cite{WW'84}.

Matrix representations of ternary groups are described in
\cite{BDD2} (see also \cite{BDD1}). In these representations each
matrix is determined by two elements. Below we present the general
concept of such representations.

Let $V$ be a complex vector space and $\operatorname*{End}V$
denotes a set of $\mathbb{C}-$linear endomorphisms of $V$.

\begin{definition}
\label{def-left} A \textit{left bi-element representation} of an
$n$-ary group $(G,f)$ in a vector space $V$ is a map
$\Pi^{L}:G^{n-1}\rightarrow\operatorname*{End}V$ such that
\begin{align}
\Pi^{L}\left(a_{1}^{n-1}\right)\circ\Pi^{L}\left(b_{1}^{n-1}\right)
& =\Pi^{L}\left( f(a_{1}^{n-1},b_{1}),b_{2}^{n-1}\right)
,\label{lr1}\\
\Pi^{L}\big(\stackrel{(n-2)}{a},\overline{a}\,\big)   &
=\operatorname*{id}\nolimits_{V} \label{lr2}
\end{align}
for all $a,a_1,\ldots,a_{n-1},b_1,\ldots,b_{n-1}\in G$.
\end{definition}

Note that the axioms considered in the above definition are the
natural ones satisfied by left multiplications $x\mapsto
f(a_1^{n-1},x)$.

Using (\ref{lr2}) and the associativity of the operation $f$ it is
not difficult to see that
\[
\Pi^{L}\left( f(a_1^{n-1}) ,a_{n}^{2n-2}\right) =\Pi^{L}\left(
a_{1}^{i},f(a_{i+1}^{n+i-1}),a_{n+i}^{2n-2}\right)
\]
for all $a_1,\ldots,a_{2n-2}\in G$ and $i=1,2,\ldots,n-1$.
Moreover, from (\ref{lr2}), by (\ref{dor-l}), we have also
\[
\Pi^{L}\big(\stackrel{(j-1)}{a},\overline{a},\stackrel{(n-j-1)}{a}\big)
=\operatorname*{id}\nolimits_{V}
\]
for all $a\in G$ and $j=1,2,\ldots,n-1$.

\begin{lemma}\label{lem20}
A left bi-element representation of an $n$-ary group is uniquely
determined by two elements.
\end{lemma}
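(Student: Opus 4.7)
The plan is to start from the general value $\Pi^{L}(a_{1}^{n-1})$ and rewrite it, via axioms (\ref{lr1}) and (\ref{lr2}), in a normal form whose only free parameters are two elements of $G$.

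First, I would choose an arbitrary element $b\in G$. By axiom (\ref{lr2}), the operator
\[
\Pi^{L}\bigl(\stackrel{(n-2)}{b},\overline{b}\bigr)=\operatorname{id}\nolimits_{V},
\]
so trivially
\[
\Pi^{L}(a_{1}^{n-1})=\Pi^{L}(a_{1}^{n-1})\circ\Pi^{L}\bigl(\stackrel{(n-2)}{b},\overline{b}\bigr).
\]

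Next, I would apply the composition law (\ref{lr1}) to the right-hand side with $b_{1}=b_{2}=\dots=b_{n-2}=b$ and $b_{n-1}=\overline{b}$. Since $f(a_{1}^{n-1},b_{1})=f(a_{1}^{n-1},b)$ and $b_{2}^{n-1}=(\stackrel{(n-3)}{b},\overline{b})$, this yields
\[
\Pi^{L}(a_{1}^{n-1})=\Pi^{L}\bigl(f(a_{1}^{n-1},b),\stackrel{(n-3)}{b},\overline{b}\bigr).
\]
Setting $c:=f(a_{1}^{n-1},b)\in G$, the right-hand side becomes $\Pi^{L}(c,\stackrel{(n-3)}{b},\overline{b})$, whose arguments are determined entirely by the two elements $c$ and $b$ (the unary operation $\bar{\,\,}$ is a term of the algebra, so $\overline{b}$ is not an independent datum). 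A simple count confirms that the list $(c,\stackrel{(n-3)}{b},\overline{b})$ has the required length $1+(n-3)+1=n-1$.

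There is no real obstacle in this argument; the only thing to watch is the arity bookkeeping and the fact that one may choose $b$ freely (the reduction is not canonical in $b$, but for any fixed $b$ it exhibits $\Pi^{L}$ as a function of the single remaining element $c$, proving that two elements of $G$ suffice). I would close by remarking that the two consequences of (\ref{lr2}) derived just before the lemma — namely the associative reshuffling and the identity $\Pi^{L}(\stackrel{(j-1)}{a},\overline{a},\stackrel{(n-j-1)}{a})=\operatorname{id}_{V}$ — are not needed here, but would give alternative equivalent normal forms placing $\overline{b}$ in other slots.
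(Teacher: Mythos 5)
Your argument is correct, and its engine is the same as the paper's: compose $\Pi^{L}(a_1^{n-1})$ on the right with the identity operator $\Pi^{L}\big(\stackrel{(n-2)}{b},\overline{b}\big)$ from (\ref{lr2}) and apply (\ref{lr1}) once, which collapses the $n-1$ arguments into the single element $c=f(a_1^{n-1},b)$; your arity count and the remark that $\overline{b}$ is not an independent datum are both fine. Where you diverge is that you stop at the normal form $\Pi^{L}\big(c,\stackrel{(n-3)}{b},\overline{b}\big)$, whereas the paper performs one further step: invoking the $n$-ary group axiom to solve $f(a_1^{n-1},a)=f(\stackrel{(n-2)}{b},c,a)$ for $c$, it applies (\ref{lr1}) once more (in reverse) to reach $\Pi^{L}\big(\stackrel{(n-2)}{b},c\big)$, a form whose argument list contains literally only the two elements $b$ and $c$ with no skew element. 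Your version suffices for the lemma as stated, but the paper's specific normal form is the one quoted afterwards -- in Corollary~\ref{cor21}, in the proposition that a left bi-element representation induces a representation $\pi(x)=\Pi^{L}\big(\stackrel{(n-2)}{a},x\big)$ of the retract, and in the worked example -- so if your proof is to feed into the rest of the section you should append that last reduction; it costs only the solvability axiom of $n$-ary groups, which your argument never needed to invoke.
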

\begin{proof} According to the definition of an $n$-ary group for
every $a,a_1,\ldots,a_{n-1},b\in G$ there exists $c\in G$ such
that $f(a_1^{n-1},a)=f(\stackrel{(n-2)}{b},c,a)$. So,
\[\arraycolsep=.5mm
\begin{array}{rl}
\Pi^{L}\left(a_{1}^{n-1}\right)&=
\Pi^{L}\left(a_{1}^{n-1}\right)\circ\Pi^{L}\big(\stackrel{(n-2)}{a},\overline{a}\big)=
\Pi^{L}\big(f(a_{1}^{n-1},a),\stackrel{(n-3)}{a},\overline{a}\big)\\[4pt]
&=\Pi^{L}\big(f(\stackrel{(n-2)}{b},c,a),\stackrel{(n-3)}{a},\overline{a}\big)=
\Pi^{L}\big(\stackrel{(n-2)}{b},c\big)\circ\Pi^{L}\big(\stackrel{(n-2)}{a},\overline{a}\big)\\[4pt]
&=\Pi^{L}\big(\stackrel{(n-2)}{b},c\big),
\end{array}
\]
which completes the proof.
\end{proof}

\begin{corollary}\label{cor21}
$\Pi^{L}(a_1^{n-1})=\Pi^{L}(b_1^{n-1})\Longleftrightarrow
f(a_1^{n-1},a)=f(b_1^{n-1},a)$ \ $\forall a\in G$.
\end{corollary}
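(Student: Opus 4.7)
The plan is to reuse the reduction identity that appears in the proof of Lemma~\ref{lem20}. For any $a\in G$, composing $\Pi^L(a_1^{n-1})$ on the right with $\Pi^L(\stackrel{(n-2)}{a},\overline{a})=\operatorname*{id}\nolimits_V$ (by (\ref{lr2})) and applying (\ref{lr1}) gives
\[
\Pi^L(a_1^{n-1})=\Pi^L\bigl(f(a_1^{n-1},a),\stackrel{(n-3)}{a},\overline{a}\bigr),
\]
and the analogous identity with $b_1^{n-1}$ in place of $a_1^{n-1}$. This identity, which displays $\Pi^L(a_1^{n-1})$ as a function of $f(a_1^{n-1},a)$ alone once $a$ is fixed, drives both directions of the claimed equivalence.

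For $(\Leftarrow)$ I would pick any $a\in G$; the hypothesis $f(a_1^{n-1},a)=f(b_1^{n-1},a)$ makes the two right-hand sides of the displayed identities literally coincide, yielding $\Pi^L(a_1^{n-1})=\Pi^L(b_1^{n-1})$. For $(\Rightarrow)$ I would read the same computation backwards: the hypothesis $\Pi^L(a_1^{n-1})=\Pi^L(b_1^{n-1})$ combined with the two identities gives, for every $a\in G$,
\[
\Pi^L\bigl(f(a_1^{n-1},a),\stackrel{(n-3)}{a},\overline{a}\bigr)=\Pi^L\bigl(f(b_1^{n-1},a),\stackrel{(n-3)}{a},\overline{a}\bigr),
\]
from which $f(a_1^{n-1},a)=f(b_1^{n-1},a)$ must be extracted via the uniqueness content of Lemma~\ref{lem20}: each bi-element representation admits a unique two-element canonical form $\Pi^L(\stackrel{(n-2)}{b},c)$, so equality of two such forms whose trailing $n-2$ coordinates agree forces equality of their leading coordinates, and unique solvability in the $n$-ary group $(G,f)$ then yields the desired equality of $f$-values.

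The step I expect to demand the most care is precisely this last injectivity in the first slot of $\Pi^L(\,\cdot\,,\stackrel{(n-3)}{a},\overline{a})$, which is implicit in the word ``uniquely'' of Lemma~\ref{lem20} rather than produced by a direct manipulation of (\ref{lr1}) and (\ref{lr2}); once it is granted, the corollary reduces to a single substitution in the reduction identity above.
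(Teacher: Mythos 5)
Your $(\Leftarrow)$ direction is correct and is exactly what the paper's Lemma~\ref{lem20} computation provides: the reduction identity $\Pi^{L}(a_1^{n-1})=\Pi^{L}\bigl(f(a_1^{n-1},a),\stackrel{(n-3)}{a},\overline{a}\bigr)$, obtained by composing with $\Pi^{L}\bigl(\stackrel{(n-2)}{a},\overline{a}\bigr)=\operatorname{id}_V$ and applying (\ref{lr1}), shows that the operator depends on the tuple only through the value $f(a_1^{n-1},a)$, so equality of these values (even for a single $a$) forces equality of the operators.

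The $(\Rightarrow)$ direction, however, contains a genuine gap, and it is exactly the step you flagged and then waved through. Lemma~\ref{lem20} asserts that every $\Pi^{L}(a_1^{n-1})$ \emph{equals} some $\Pi^{L}\bigl(\stackrel{(n-2)}{b},c\bigr)$; the ``uniqueness'' there is the determination of the operator by the pair $(b,c)$ (via unique solvability in $(G,f)$), not the determination of $c$ by the operator. The injectivity of $u\mapsto\Pi^{L}\bigl(u,\stackrel{(n-3)}{a},\overline{a}\bigr)$ that your argument needs does not follow from (\ref{lr1}) and (\ref{lr2}) and is in fact false for a general left bi-element representation: the trivial representation $\Pi^{L}(\,\cdot\,)\equiv\operatorname{id}_V$ satisfies both axioms, makes the left-hand side of the corollary hold for all pairs of tuples, and yet $f(a_1^{n-1},a)=f(b_1^{n-1},a)$ need not hold. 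So the forward implication cannot be extracted from the axioms alone; it requires faithfulness of $\Pi^{L}$. It does hold by direct evaluation for the regular representation of Example~\ref{exam-kg} (apply $\Pi^{L}_{reg}(a_1^{n-1})$ to the basis vector $a$), which is the only setting in which the paper later invokes the corollary. To close your argument you must either restrict to faithful (e.g.\ regular) representations, or reinterpret the left-hand side as holding for \emph{every} left bi-element representation and then test against the regular one; the appeal to Lemma~\ref{lem20} does not supply the missing injectivity.
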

\begin{proposition}
Let $(G,f)$ be an $n$-ary group derived from a binary group
$(G,\odot)$. There is one-to-one correspondence between
representations of $(G,\odot)$ and left bi-element representations
of $(G,f)$.
\end{proposition}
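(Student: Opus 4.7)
The plan is to set up the natural bijection by the maps
\[
\rho\;\longmapsto\;\Pi^{L}_{\rho}(a_{1}^{n-1}):=\rho(a_{1})\circ\rho(a_{2})\circ\cdots\circ\rho(a_{n-1})=\rho(a_{1}\odot\cdots\odot a_{n-1})
\]
in one direction, and
\[
\Pi^{L}\;\longmapsto\;\rho_{\Pi}(a):=\Pi^{L}\big(\stackrel{(n-2)}{e},a\big)
\]
in the other, where $e$ denotes the identity of $(G,\odot)$. Throughout I rely on the fact that in the $n$-ary group derived from $(G,\odot)$ the skew element is given by $\overline{a}=a^{-(n-2)}$, since the defining equation $f(\stackrel{(n-1)}{a},\overline{a})=a$ becomes $a^{n-1}\odot\overline{a}=a$ in $(G,\odot)$.

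For the forward direction I would verify axioms $(\ref{lr1})$ and $(\ref{lr2})$ for $\Pi^{L}_{\rho}$. Axiom $(\ref{lr1})$ is immediate because
\[
f(a_{1}^{n-1},b_{1})\odot b_{2}\odot\cdots\odot b_{n-1}=a_{1}\odot\cdots\odot a_{n-1}\odot b_{1}\odot\cdots\odot b_{n-1},
\]
so applying $\rho$ and using that $\rho$ is a group homomorphism yields exactly $\Pi^{L}_{\rho}(a_{1}^{n-1})\circ\Pi^{L}_{\rho}(b_{1}^{n-1})$. Axiom $(\ref{lr2})$ reduces to $\rho(a^{n-2}\odot a^{-(n-2)})=\rho(e)=\operatorname{id}_{V}$.

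For the reverse direction I would show that $\rho_{\Pi}$ is an ordinary representation of $(G,\odot)$. Using $(\ref{lr1})$ with the $(n-1)$-tuples $(\stackrel{(n-2)}{e},a)$ and $(\stackrel{(n-2)}{e},b)$ gives
\[
\rho_{\Pi}(a)\circ\rho_{\Pi}(b)=\Pi^{L}\big(f(\stackrel{(n-2)}{e},a,e),\stackrel{(n-3)}{e},b\big)=\Pi^{L}\big(a,\stackrel{(n-3)}{e},b\big),
\]
while $\rho_{\Pi}(a\odot b)=\Pi^{L}(\stackrel{(n-2)}{e},a\odot b)$. By Corollary~\ref{cor21} these two operators agree provided $f(a,\stackrel{(n-3)}{e},b,c)=f(\stackrel{(n-2)}{e},a\odot b,c)$ for every $c\in G$, and both sides manifestly equal $a\odot b\odot c$. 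The neutrality condition $\rho_{\Pi}(e)=\operatorname{id}_{V}$ comes from $(\ref{lr2})$, since $\overline{e}=e$ gives $\rho_{\Pi}(e)=\Pi^{L}(\stackrel{(n-1)}{e})=\Pi^{L}(\stackrel{(n-2)}{e},\overline{e})=\operatorname{id}_{V}$.

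Finally I would verify the two constructions are mutually inverse. Starting from $\rho$, we recover $\rho_{\Pi^{L}_{\rho}}(a)=\rho(e\odot\cdots\odot e\odot a)=\rho(a)$. Starting from $\Pi^{L}$, one needs $\Pi^{L}_{\rho_{\Pi}}(a_{1}^{n-1})=\Pi^{L}(\stackrel{(n-2)}{e},a_{1}\odot\cdots\odot a_{n-1})$ to coincide with $\Pi^{L}(a_{1},\ldots,a_{n-1})$; again this follows from Corollary~\ref{cor21}, because $f(\stackrel{(n-2)}{e},a_{1}\odot\cdots\odot a_{n-1},c)=f(a_{1}^{n-1},c)$ for every $c\in G$. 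The only mildly delicate point is this repeated invocation of Corollary~\ref{cor21} to recognize different $(n-1)$-tuples that induce the same endomorphism; once the explicit formula $\overline{a}=a^{-(n-2)}$ in the derived case is noted, every other step is a formal manipulation.
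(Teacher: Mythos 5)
Your proposal is correct and follows essentially the same route as the paper: the paper defines the correspondence by exactly the same two maps, $\Pi^{L}(x_1^{n-1})=\pi(x_1)\circ\cdots\circ\pi(x_{n-1})$ in one direction and $\pi(x)=\Pi^{L}\big(\stackrel{(n-2)}{e},x\big)$ in the other, leaving the verifications as "not difficult to see." You have simply supplied those omitted verifications (correctly, including the use of Corollary~\ref{cor21} and the formula $\overline{a}=a^{2-n}$ in the derived case).
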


\begin{proof}
Because $(G,f)$ is derived from $(G,\odot)$, then $x\odot
y=f\big(x,\stackrel{(n-2)}{e},y\big)$ and $\overline{e}=e$, where
$e$ is the identity of $(G,\odot)$. If $\pi$ is a representation
of $(G,\odot)$, then (as it is not difficult to see)
\begin{equation}\label{lrep}
\Pi^{L}(x_1^{n-1})
=\pi(x_1)\circ\pi(x_2)\circ\ldots\circ\pi(x_{n-1})
 \end{equation}
is a left bi-element representation of $(G,f)$. Conversely, if
$\Pi^{L}$ is a left bi-element representation of $(G,f)$, then
$\pi(x)=\Pi^{L}\big(\stackrel{(n-2)}{e},x\big)$ is a
representation of $(G,\odot)$ and (\ref{lrep}) is satisfied.
\end{proof}

\begin{proposition}
Any left bi-element representations of an $n$-ary group $(G,f)$
induces a representation of its retract.
\end{proposition}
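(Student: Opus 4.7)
The plan is to mimic the construction in the preceding proposition (the derived-group case): fix an arbitrary $a\in G$ and define $\pi_a:G\to\operatorname*{End}V$ by
\[
\pi_a(x)=\Pi^{L}\big(\stackrel{(n-2)}{a},x\big),
\]
then verify that $\pi_a$ is an ordinary representation of the binary retract $ret_a(G,f)$, whose operation is $x\circ y=f(x,\stackrel{(n-2)}{a},y)$ and whose identity is $\overline{a}$.

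The identity axiom is immediate from (\ref{lr2}): $\pi_a(\overline{a})=\Pi^{L}\big(\stackrel{(n-2)}{a},\overline{a}\big)=\operatorname*{id}\nolimits_{V}$.

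For the homomorphism property I would apply the defining relation (\ref{lr1}) with $(a_1,\ldots,a_{n-1})=(\stackrel{(n-2)}{a},x)$ and $(b_1,\ldots,b_{n-1})=(\stackrel{(n-2)}{a},y)$ to obtain
\[
\pi_a(x)\circ\pi_a(y)=\Pi^{L}\big(f(\stackrel{(n-2)}{a},x,a),\stackrel{(n-3)}{a},y\big),
\]
whereas $\pi_a(x\circ y)=\Pi^{L}\big(\stackrel{(n-2)}{a},f(x,\stackrel{(n-2)}{a},y)\big)$. By Corollary~\ref{cor21}, the two $(n-1)$-tuples of arguments produce the same endomorphism provided that, for every $b\in G$,
\[
f\big(f(\stackrel{(n-2)}{a},x,a),\stackrel{(n-3)}{a},y,b\big)=f\big(\stackrel{(n-2)}{a},f(x,\stackrel{(n-2)}{a},y),b\big),
\]
and this is immediate from the associativity of $f$: after flattening, both sides are the long product $f_{(2)}\big(\stackrel{(n-2)}{a},x,\stackrel{(n-2)}{a},y,b\big)$ applied to the same $(2n-1)$-tuple.

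There is no substantial obstacle here; once the natural formula for $\pi_a$ is written down, the whole argument reduces to the two defining axioms (\ref{lr1})--(\ref{lr2}) together with the associativity of $f$ (via Corollary~\ref{cor21}). The only subtlety worth flagging is that the base point appearing in the formula for $\pi_a$ must coincide with the base point of the retract $ret_a(G,f)$, so that the $n-2$ repeated entries in $\pi_a$ line up with those in the retract operation; changing $a$ produces an analogous representation of an isomorphic retract, consistently with Theorem~\ref{izoth}.
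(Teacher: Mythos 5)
Your proof is correct and follows essentially the same route as the paper: define $\pi(x)=\Pi^{L}\big(\stackrel{(n-2)}{a},x\big)$, obtain the identity axiom from (\ref{lr2}), and obtain multiplicativity by applying (\ref{lr1}) and then rebracketing $\Pi^{L}\big(f(\stackrel{(n-2)}{a},x,a),\stackrel{(n-3)}{a},y\big)=\Pi^{L}\big(\stackrel{(n-2)}{a},f(x,\stackrel{(n-2)}{a},y)\big)$. The only cosmetic difference is that the paper justifies this last step by the associativity identity for $\Pi^{L}$ recorded just after Definition~\ref{def-left}, whereas you route it through Corollary~\ref{cor21}; both reduce to the associativity of $f$.
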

\begin{proof}
Let $(G,\odot)=ret_a(G,f)$ for some fixed $a\in G$. According to
Lemma~\ref{lem20}, for all $a_1,\ldots,a_{n-1}\in G$ there exists
$c\in G$ such that
$\Pi^{L}(a_1^{n-1})=\Pi^{L}\big(\stackrel{(n-2)}{a},c\big)=\pi(c)$.
Then $\pi(\overline{a})=\operatorname*{id}\nolimits_{V}$ and
\[\arraycolsep=.5mm
\begin{array}{rl}
\pi(x)\circ\pi(y)&=\Pi^{L}\big(\stackrel{(n-2)}{a},x\big)\circ
\Pi^{L}\big(\stackrel{(n-2)}{a},y\big)=\Pi^{L}\big(f\big(\stackrel{(n-2)}{a},
x,a\big),\stackrel{(n-3)}{a},y\big)\\[4pt]
&=\Pi^{L}\big(\stackrel{(n-2)}{a},
f\big(x,\stackrel{(n-2)}{a},y\big)\big)=
\Pi^{L}\big(\stackrel{(n-2)}{a},x\odot y\big)=\pi(x\odot y),
\end{array}
\]
which proves that $\pi$ is a representation of $(G,\odot)$.
\end{proof}

For ternary groups also the converse statement is true: every
representation of $ret_a(G,f)$ induces a left bi-element
representation of $(G,f)$ (cf. \cite{BDD2}). All such bi-element
representations are invertible.

\begin{definition}
A \textit{right bi-element representation} of an $n$-ary group
$(G,f)$ in $V$ is a map
$\Pi^{R}:G^{n-1}\rightarrow\operatorname*{End}\,V$ such that
\begin{align*}
\Pi^{R}\left(a_{1}^{n-1}\right)\circ\Pi^{R}\left(
b_{1}^{n-1}\right) &
=\Pi^{R}\left(a_{1}^{n-2},f(a_{n-1},b_{1}^{n-1}) \right) ,\\
\Pi^{R}\big(\stackrel{(n-2)}{a},\overline{a}\,\big)   &
=\operatorname*{id}\nolimits_{V}
\end{align*}
for all $a,a_1,\ldots,a_{n-1},b_1,\ldots,b_{n-1}\in G$.
\end{definition}

It is clear that left and right bi-element representations are
dual.

\begin{example}\label{exam-kg}
Let $(G,f)$ be an $n$-ary group and $\mathbb{C}G$ denote a vector
space spanned by $G$. It means that any element $u$ of
$\mathbb{C}G$ can be uniquely presented in the form
$u=\sum_{i=1}^{m}k_{i}y_{i}$, with $k_{i}\in\mathbb{C},$ $y_{i}\in
G$, $m\in N$ (we do not assume that $G$ has finite rank).
Moreover, $\mathbb{C}G$ is an $n$-ary (group) algebra (for details
see \cite{zek} or \cite{zek/art3}). Then left and right bi-element
regular representations can be immediately defined by means of
this structure
\begin{align}
\Pi_{reg}^{L}\left(a_{1}^{n-1}\right)u& =\sum_{i=1}^{m}k_{i}f(a_{1}^{n-1},y_{i}),\label{pr1}\\
\Pi_{reg}^{R}\left(a_{1}^{n-1}\right)u&
=\sum_{i=1}^{m}k_{i}f(y_{i},a_{1}^{n-1}). \label{pr2}
\end{align}
\end{example}

Left (right) regular bi-element representations of a ternary group
are unitary \cite{BDD2}.
\begin{example}
Consider an $n$-ary group $(G,f)$, where $n$ is odd,
$G=\mathbb{Z}_{3}=\{ 0,1,2\}$ and
$f(x_1^n)=\sum_{i=1}^n(-1)^{i+1}x_i(\rm{mod}\,3)$. It is clear
that
\[
\Pi^{L}(x_1^{n-1})=\Pi^{R}(x_{n-1},x_{n-2},\ldots,x_2,x_1).
\]

From the proof of Lemma~\ref{lem20} it follows that it is
sufficient to consider the representations of the form
$\Pi^{L}\big(\stackrel{(n-2)}{a},b\big)$. By Corollary~\ref{cor21}
in our case
\[
\Pi^{L}(\stackrel{(n-2)}{a},b)=\Pi^{L}(\stackrel{(n-2)}{c},d)\Longleftrightarrow
(a-b)=(c-d)(\rm{mod}\,3).
\]
Straightforward calculations give the left regular representation
in the manifest matrix form, where $\Pi^{L}(a,b)$ means
$\Pi^{L}\big(\stackrel{(n-2)}{a},b\big)$.

\begin{align*}
\Pi_{reg}^{L}\left(  0,0\right)   &  =\Pi_{reg}^{L}\left(
2,2\right) =\Pi_{reg}^{L}\left(  1,1\right)  = \left(
\begin{array}
[c]{ccc}%
1 & 0 & 0\\
0 & 1 & 0\\
0 & 0 & 1
\end{array}
\right)  =[1]\oplus\lbrack1]\oplus\lbrack1],\\[4pt]
\Pi_{reg}^{L}\left(2,0\right)&=\Pi_{reg}^{L}\left(1,2\right)
=\Pi_{reg}^{L}\left(0,1\right)=\left(
\begin{array}
[c]{ccc}%
0 & 1 & 0\\
0 & 0 & 1\\
1 & 0 & 0
\end{array}
\right) \\
&  =[1]\oplus\left(
\begin{array}
[c]{cc}%
-\dfrac{1}{2} & -\dfrac{\sqrt{3}}{2}\\
\dfrac{\sqrt{3}}{2} & -\dfrac{1}{2}%
\end{array}
\right)  =[1]\oplus\left[
-\dfrac{1}{2}+\dfrac{1}{2}i\sqrt{3}\right]
\oplus\left[  -\dfrac{1}{2}-\dfrac{1}{2}i\sqrt{3}\right]  ,\\[4pt]
\Pi_{reg}^{L}\left(  2,1\right)   &  =\Pi_{reg}^{L}\left(
1,0\right) =\Pi_{reg}^{L}\left(  0,2\right)  =\left(
\begin{array}
[c]{ccc}%
0 & 0 & 1\\
1 & 0 & 0\\
0 & 1 & 0
\end{array}
\right) \\
&  =[1]\oplus\left(
\begin{array}
[c]{cc}%
-\dfrac{1}{2} & \dfrac{\sqrt{3}}{2}\\
-\dfrac{\sqrt{3}}{2} & -\dfrac{1}{2}%
\end{array}
\right)  =[1]\oplus\left[
-\dfrac{1}{2}-\dfrac{1}{2}i\sqrt{3}\right] \oplus\left[
-\dfrac{1}{2}+\dfrac{1}{2}i\sqrt{3}\right]  .
\end{align*}
\end{example}

Observe that for an $n$-ary (group) algebra $\mathbb{C}G$ from
Example~\ref{exam-kg} we can consider additionally middle
representations of the form
\[
\Pi_{reg}^{M}(a_1^{n-1})u=\sum_{i=1}^{m}k_if(a_1^{j-1},y_i,a_{j}^{n-1}),
\]
where $j=2,3,\ldots,n-1$ is fixed.

Since in this case we obtain complicated formulas we restrict our
attention to middle bi-element representations of ternary groups
described in \cite{BDD1} and \cite{BDD2}.

\begin{definition}
A \textit{middle bi-element representation} of a ternary group
$(G,f)$ in $V$ is a map $\Pi^{M}:G\times
G\rightarrow\operatorname*{End}\,V$ such that
\begin{equation*}
\Pi^{M}\left(a_{3},b_{3}\right)\circ\Pi^{M}\left(
a_{2},b_{2}\right)\circ\Pi^{M}\left(a_{1},b_{1}\right)
=\Pi^{M}\left(f(a_{3},a_{2},a_{1}),f(b_{1},b_{2},b_{3})\right)  ,
\label{pm}
\end{equation*}
\begin{equation*}
\Pi^{M}\left(a,b\right)\circ\Pi^{M}\left(\overline{a},\overline
{b}\right)=\Pi^{M}\left(\overline{a},\overline{b}\right)\circ
\Pi^{M}\left(a,b\right)=\operatorname*{id}\nolimits_{V}
\label{pm1}
\end{equation*}
for all $a,a_{1},a_{2},a_{3},b,b_{1},b_{2},b_{3}\in G$.
\end{definition}

The composition of two middle bi-element representations is not a
middle representation, but in some cases described in \cite{BDD2}
it is a left bi-element representation. Obviously,
$\Pi^{M}(a,b)=\Pi^{M}(c,d)$ if and only if $f(a,y,b)=f(c,y,d)$ for
every $y\in G$.

\begin{example}\label{exam-z3m}
Let $G=\mathbb{Z}_3$ and $f(x,y,z)=(x-y+z)(\rm{mod}\,3)$. Then
$(G,f)$ is a ternary group in which
\[
\Pi^{M}(a,b)=\Pi^{M}(c,d)\Longleftrightarrow
(a+b)=(c+d)(\rm{mod}\,3).
\]
So,

\begin{align*}
\Pi_{reg}^{M}\left(  0,0\right)   &  =\Pi_{reg}^{M}\left(
1,2\right) =\Pi_{reg}^{M}\left(  2,1\right)  =\left(
\begin{array}
[c]{ccc}%
1 & 0 & 0\\
0 & 0 & 1\\
0 & 1 & 0
\end{array}
\right)=\left[  1\right]  \oplus\left[
\begin{array}
[c]{cc}%
-1 & 0\\
0 & 1
\end{array}
\right]   ,\\
\Pi_{reg}^{M}\left(  0,1\right)   &  =\Pi_{reg}^{M}\left(
1,0\right) =\Pi_{reg}^{M}\left(  2,2\right)  =\left(
\begin{array}
[c]{ccc}%
0 & 1 & 0\\
1 & 0 & 0\\
0 & 0 & 1
\end{array}
\right) =\left[  1\right]  \oplus\left[
\begin{array}
[c]{cc}%
\dfrac{1}{2} & -\dfrac{\sqrt{3}}{2}\\
-\dfrac{\sqrt{3}}{2} & -\dfrac{1}{2}%
\end{array}
\right]  ,\\
\Pi_{reg}^{M}\left(  0,2\right)   &  =\Pi_{reg}^{M}\left(
2,0\right) =\Pi_{reg}^{M}\left(  1,1\right)  =\left(
\begin{array}
[c]{ccc}%
0 & 0 & 1\\
0 & 1 & 0\\
1 & 0 & 0
\end{array}
\right) =\left[  1\right]  \oplus\left[
\begin{array}
[c]{cc}%
\dfrac{1}{2} & \dfrac{\sqrt{3}}{2}\\
\dfrac{\sqrt{3}}{2} & -\dfrac{1}{2}%
\end{array}
\right]  .
\end{align*}
This representation $\Pi_{reg}^{M}$ is equivalent to the
orthogonal direct sum of two irreducible representations, i.e.,
one-dimensional trivial  and two-dimensional.
\end{example}

In this example for all $x$ we have
$\,\Pi^{M}(x,\overline{x})=\Pi^{M}(x,x)\neq\mathrm{id}_{V}$, but
$\,\Pi^{M}(x,y)\circ\Pi^{M}(x,y)=\mathrm{id}_{V}$ for all $x,y$.

\medskip

Putting in the above example $\gamma_{k+l}^{M}=\Pi_{reg}^{M}(k,l)$
for $\;k,l\in\mathbb{Z}_{3}$, we obtain the identity
\[
\gamma_{i}^{M}\circ\gamma_{j}^{M}\circ\gamma_{k}^{M}=\gamma_{f(i,j,k)}
^{M},\;\;\;i,j,k\in\mathbb{Z}_{3},
\]
which in some sense can be treated as a \textit{ternary analog of
Clifford algebra} \cite{BDD1}. Any matrix representation of this
identity gives rise to the middle representation $\,\Pi^{M}\left(
k,l\right)=\gamma_{k+l}$.

\medskip

Different connections between middle bi-element representations of
a ternary group $(G,f)$ and representations of its retract and
covering group are described in \cite{BDD2}. For example, any
middle bi-element representation of a ternary group $(G,f)$
derived from a group $(G,\cdot)$ has the form
$\Pi^{M}(a,b)=\pi(a)\circ\rho(b^{-1})$, where
$\pi(x)=\Pi^{M}(x,e)$ and $\rho(x)=\Pi^{M}(e,\overline{x})$ are
pairwise commuting representations of $(G,\cdot)$.

\section{$\mathcal{Q}$-independent sets in $HG$-algebras}

Let $\mathfrak{A}=(A,\mathbb{F})$ be an algebra $\emptyset \neq
X\subseteq A$. The set $X$ is said to be
$\mathcal{M}$\textit{-independent} if
\[
\begin{array}{c}
(\forall n\in \mathbb{N}, \ n\leqslant card(X)) \ (\forall
f,g\in\mathbb{T}^{(n)}(\mathfrak{A})) \ (\forall\underset{\neq
}{\underbrace{a_{1},\ldots ,a_{n}}}\in X)\\
\big[f(a_{1}^{n})=g(a_{1}^{n})\Longrightarrow f=g \big].
\end{array}
\]

\medskip

This condition is equivalent to each of the following ones:
\medskip
\begin{enumerate}
\item[(a)]  $(\forall n\in \mathbb{N}$, $n\leqslant card(X))$
$(\forall f,g\in\mathbb{T}^{(n)}(\mathfrak{A}))$ $(\forall p:
X\rightarrow A)$ $(\forall a_{1},\ldots ,a_{n}\in X)$
$$
\big[f(a_{1}^{n})=g(a_{1}^{n})\Longrightarrow f(p(a_{1}),\ldots
,p(a_{n}))=g(p(a_{1}),\ldots ,p(a_{n}))\big],
$$

\item[(b)]  $(\forall p\in A^{X}) \ (\exists\bar{p}\in Hom(\langle
X\rangle_{\mathfrak{A}},\mathfrak{A})) \ \bar{p}|_{X}=p$, where
$\langle X\rangle_{\mathfrak{A}}$ is a subalgebra of
$\mathfrak{A}$ generated by $X$.
\end{enumerate}

\medskip
The notion of $\mathcal{M}$-independence is stronger than that of
independence with respect to the closure operator of such a kind
$X\mapsto\langle X\rangle_{\mathfrak{A}}$ (for $X\subseteq A$).

Let $\emptyset\neq X\subseteq A$ and
$$
\mathcal{Q}_X\subseteq A^X=\mathcal{M}_X=\{p\;|\;p:X\rightarrow
A\},
$$
$$
\mathcal{Q}(A)=\mathcal{Q}=\bigcup\{\mathcal{Q}_X \;|\;X\subseteq
A \},
$$
$$
\mathcal{M}(A)=\mathcal{M}=\bigcup\{A^X \;|\;X\subseteq A\}.
$$

For an algebra $\mathfrak{A} = (A,\mathbb{F})$, a mapping \ $p:X
\rightarrow A$ belongs to $\mathcal{H}_X (\mathfrak{A})$ if and
only if there exists a homomorphism \ $\bar{p}:\langle X\rangle
_{\mathfrak{A}}\rightarrow A$ such that $\bar{p}|_{X}=p$.

The set $X$ is said to be $\mathcal{Q}$-{\it independent} if
$\mathcal{Q}_X \subseteq \mathcal{H}_X (\mathfrak{A})$ or,
equivalently,
$$
(\forall p\in \mathcal{Q}_{X})\;(\forall\;{\rm finite }\;
n\leqslant card(X))\;(\forall f,g\in
\mathbb{T}^{(n)}(\mathfrak{A}))\; (\forall a_{1},\ldots ,a_{n}\in
X)\;$$
$$\big[f(a_{1}^{n})=g(a_{1}^{n})\Longrightarrow f(p(a_{1}),\ldots
,p(a_{n}))=g(p(a_{1}),\ldots ,p(a_{n}))\big].$$

In the case when $\mathcal{Q}=\bigcup \{p|_{X}\;|\;p\in A^{A}, \
X\subseteq A\}$ and
$$
(\forall f,g\in \mathbb{T}^{(1)}(\mathfrak{A}))\;(\forall a\in A)
\;\big[f(a)=g(a)\Longrightarrow f(p(a))=g(p(a))\big],$$ the set
$X$ is said to be {\it $\mathcal{G}$-independent}.
\medskip

For commutative groups, the notion of $\mathcal{G}$-independence
gives us the well-known \textit{linear independence}.

For $HG$-algebras of type $\mathfrak{H}=(G,+,\varphi,b)$, where
$(G,+)$ is a commutative group, the equality
\[
F_1(x_1,\ldots,x_m)=F_2(x_1,\ldots,x_m)
\]
(for two term operations of the form (\ref{sum}) in
$\mathfrak{H}$) is equivalent to the equality
\[
H(x_1,\ldots,x_m)=0,
\]
where $H\in\mathbb{T}^{(m)}(\mathfrak{H})$, i.e.,
$H(x_1,\ldots,x_m)=\sum\limits_{i=1}^{m}g_i(x_i)+k_{_H} b$, and
$0$ denotes the zero of the group $(G,+)$.

Consider a subset $X$ of $G$. Let for $a_1,\ldots,a_m\in X$ the
equality
\[
H(a_1,\ldots,a_m)=0
\]
hold. Taking into account the mapping $p:X\to\langle
X\rangle_{\mathfrak{A}}$ defined by $p(a_i)=0$ and $p(x)=x$ for
$x\in X\setminus\{a_1,\ldots,a_m\}$, we get $k_{_H}b=0$. Therefore
$$
\sum\limits_{i=1}^{m} g_i(a_i)=0.
$$
Consider the mapping $q_j:X\to \langle X\rangle_{\mathfrak{A}}$
defined for fixed $j\in\{1,\ldots,m\}$ as follows:
\[
q_j(x)=\left\{\begin{array}{ccl} a_j&{\rm if }&x=a_j ,\\[4pt]
0&{\rm if }&x\ne a_j.\end{array}\right.
\]
We obtain $g_j(a_j)=0$ for all $j=1,2,\ldots,m$. (In the
considered case all $q_j$ belong to $\mathcal{M}$ and
$\mathcal{G}$ .)

In particular, we can easily observe, by similar considerations,
that the following result holds:

\begin{theorem} {\bf (W.A.Dudek, K.G{\l}azek, 2006)}\newline
Let $X\subseteq G$ be a subset of the $HG$-algebra
$\mathfrak{H}=(G,+,\varphi,b)$. Then $X$ is
$\mathcal{G}$-independent if and only if for any $m\leqslant
card(X)$ for all $a_1,\ldots,a_m\in X$ and every term operation
$H(x_1,\ldots,x_m)=\sum\limits_{i=1}^{m}g_i(x_i)+k_{_H}b$ the
equality
\begin{equation}\label{ff}
\sum\limits_{i=1}^{m}g_i(a_i)+k_{_H}b=0
\end{equation}
is equivalent with
\[
\left(\forall i\in\{1,\ldots,m\}\right)\,\left(g_i(a)=0\;\&\;
k_{_H}b=0\right).
\]

Moreover, $X$ is $\mathcal{M}$-independent in this $HG$-algebra if
and only if for all pairwise different elements $a_1,\ldots,a_m$
from $X$ equality $(\ref{ff})$ implies $g_i(x)=0$ for all
$i=1,2,\ldots,m$ and $\,k_Hb=0$.
\end{theorem}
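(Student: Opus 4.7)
The plan is to exploit the canonical form of term operations in $\mathfrak{H}$ established in the two preceding theorems: every $m$-ary term operation has the shape $F(x_1,\ldots,x_m) = \sum_{i=1}^{m} g_i(x_i) + k_F b$ with each $g_i$ of the form (\ref{g_i}). Since $(G,+)$ is commutative, the family of such operations is closed under subtraction, and so the equality $F_1(a_1^m) = F_2(a_1^m)$ of two term operations on a tuple reduces to the vanishing of a single term operation $H := F_1 - F_2$, again of the same shape, at $(a_1,\ldots,a_m)$. This reduction, already executed in the paragraph preceding the theorem, converts the quantifier over pairs of terms in the definition of independence into a single quantifier over a vanishing equation.

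For the $\mathcal{G}$-independence biconditional, the backward direction is direct: given the stated conditions and any admissible $p$ (an element of $\mathcal{G}_X$), each equation $g_i(a_i)=0$ transfers to $g_i(p(a_i))=0$ by the preservation-of-unary-equalities property defining this class; summing and using $k_Hb = 0$ yields $H(p(a_1),\ldots,p(a_m))=0$, which translates back to $F_1(p(a_1),\ldots,p(a_n))=F_2(p(a_1),\ldots,p(a_n))$, as required. The forward direction reproduces the construction carried out immediately before the theorem: from $H(a_1,\ldots,a_m)=0$, apply $\mathcal{G}$-independence first to the map $p$ sending each $a_i$ to $0$ and fixing the rest of $X$, which extracts $k_Hb=0$; then, for each $j$, to the map $q_j$ fixing $a_j$ and sending the remaining $a_i$ to $0$, which isolates $g_j(a_j)$ and forces $g_j(a_j)=0$.

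The $\mathcal{M}$-independence assertion follows the same strategy but yields a stronger conclusion, because $\mathcal{M}$-independence demands $F_1=F_2$ as term operations rather than merely coincidence on one tuple. Taking $F_1 = H$ and $F_2 = 0$ (the zero term) on pairwise distinct $a_1,\ldots,a_m\in X$ with $H(a_1,\ldots,a_m)=0$, one obtains that $H$ vanishes identically on $G^m$. Evaluating at the all-zero tuple gives $k_Hb=0$, and evaluating at tuples with only the $j$-th coordinate non-zero isolates $g_j$ and forces $g_j(x)=0$ for every $x\in G$; the converse is immediate from the canonical form. The main delicate point throughout is verifying that the test maps $p$ and $q_j$ used in the forward directions lie in the relevant class; this is guaranteed by the fact that every $g_i$ of the form (\ref{g_i}) fixes $0$, so the truncation-to-$0$ maps automatically respect the unary-term-equality condition that is the defining feature of $\mathcal{G}_X$.
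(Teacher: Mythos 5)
Your route is the paper's own: reduce the coincidence $F_1(a_1^m)=F_2(a_1^m)$ to the vanishing of a single canonical-form term $H=F_1-F_2$, then test $\mathcal{G}$- (resp.\ $\mathcal{M}$-) independence against the maps $p$ (all $a_i\mapsto 0$) and $q_j$ ($a_j$ fixed, the remaining $a_i\mapsto 0$). The backward implications and the entire $\mathcal{M}$-independence half are correct (for $\mathcal{M}$ the test maps are unrestricted, and your identification of "$H$ vanishes at one tuple of distinct arguments" with "$H\equiv 0$" is exactly what $\mathcal{M}$-independence gives). The genuine gap sits in the forward implication for $\mathcal{G}$-independence, precisely at the point you flag as delicate: the claim that $p$ and the $q_j$ belong to $\mathcal{G}_X$. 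Your justification --- every $g_i$ of the form (\ref{g_i}) fixes $0$ --- accounts for only half of a unary term operation. By the classification, a unary term of $\mathfrak{H}$ is $g(x)=g_i(x)+k_g b$, and the constant summand does not vanish under substitution of $0$. A unary coincidence at $a$ has the shape $h(a)+kb=0$ with $h$ of type (\ref{g_i}); a map sending $a$ to $0$ turns the left-hand side into $h(0)+kb=kb$, so the coincidence is preserved only when $kb=0$, which the hypothesis does not supply. Concretely, in $\mathfrak{H}=(\mathbb{Z},+,\mathrm{id},1)$ the unary terms $f(x)=x$ and $g(x)=b$ coincide at $a=1=b$ while $f(0)=0\neq 1=g(0)$; hence if $b\in\{a_1,\dots,a_m\}$ none of your test maps satisfies the defining condition of the class $\mathcal{G}$, and invoking $\mathcal{G}$-independence for them proves nothing.

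This is not a cosmetic issue that a cleverer verification would dissolve. In that same example the only self-map of $\mathbb{Z}$ preserving all unary coincidences is the identity (from $1\cdot a+(l-l')=0$ one forces $p(a)=a$ for $a\neq 0$, and $p(0)=0$ similarly), so every subset, in particular $X=\{1\}$, is vacuously $\mathcal{G}$-independent; yet $H(x)=x-b$ is a term operation with $H(1)=0$ and $g_1(1)=1\neq 0$, so the stated criterion fails. Thus the forward implication of the $\mathcal{G}$ half cannot be established by the test-map argument as it stands, and some additional hypothesis or a different construction of admissible maps is required. To be fair, the paper itself only asserts parenthetically that "all $q_j$ belong to $\mathcal{M}$ and $\mathcal{G}$" and defers details to \cite{DG}; your proposal reproduces that assertion, but the reason you give for it is incorrect, and this is the step that must be repaired (or the statement qualified) before the $\mathcal{G}$-independence equivalence can be considered proved.
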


\end{document}